%
%
%
%
%

\documentclass[12pt]{amsart}
\usepackage{amssymb,latexsym,graphicx,amscd}
\usepackage{lscape}
\usepackage[all]{xy}
\usepackage{color}
\usepackage{epic,eepic}
\usepackage{xspace}
\usepackage{mathrsfs}
\usepackage{setspace}
\textheight 230mm
\textwidth 150mm
\hoffset -16mm
\voffset -16mm

\newtheorem{Thm}{Theorem}[section]
\newtheorem{Lem}[Thm]{Lemma}
\newtheorem{Cor}[Thm]{Corollary}
\newtheorem{Prop}[Thm]{Proposition}
\newtheorem{Conj}[Thm]{Conjecture}

\newtheorem{Problem}[Thm]{Problem}

\newcommand{\A}{\mathbb{A}}

\newcommand{\Z}{\mathbb{Z}}

\newcommand{\N}{\mathbb{N}}
\newcommand{\C}{\mathbb{C}}

\newcommand{\df}{\colon}

\newcommand{\cA}{{\mathcal A}}

\newcommand{\cC}{{\mathcal C}}

\newcommand{\cF}{{\mathcal F}}

\newcommand{\cK}{{\mathcal K}}
\newcommand{\cL}{{\mathscr L}}
\newcommand{\cM}{{\mathcal M}}

\newcommand{\cP}{{\mathcal P}}
\newcommand{\cR}{{\mathcal R}}
\newcommand{\cS}{{\mathcal S}}

\newcommand{\cX}{{\mathcal X}}

\newcommand{\g}{\mathfrak{g}}
\newcommand{\n}{\mathfrak{n}}
\newcommand{\h}{\mathfrak{h}}

\newcommand{\ba}{\mathbf{a}}

\newcommand{\bi}{{\mathbf i}}

\newcommand{\bx}{{\mathbf x}}
\newcommand{\by}{{\mathbf y}}
\newcommand{\bz}{{\mathbf z}}

\newcommand{\LL}{\Lambda}
\newcommand{\GG}{\Gamma}

\newcommand{\la}{\lambda}

\newcommand{\rk}{\operatorname{rank}}

\newcommand{\Ext}{\operatorname{Ext}}

\newcommand{\Quot}{\operatorname{Frac}}

\newcommand{\bsm}{\begin{smallmatrix}}
\newcommand{\esm}{\end{smallmatrix}}

\newcommand{\bbsm}{\left[\begin{smallmatrix}}
\newcommand{\besm}{\end{smallmatrix}\right]}

\newcommand{\bbm}{\begin{matrix}}
\newcommand{\ebm}{\end{matrix}}


\begin{document}


\title{Factorial cluster algebras}

\author{Christof Gei{\ss}}
\address{Christof Gei{\ss}\newline
Instituto de Matem\'aticas\newline
Universidad Nacional Aut{\'o}noma de M{\'e}xico\newline
Ciudad Universitaria\newline
04510 M{\'e}xico D.F.\newline
M{\'e}xico}
\email{christof@math.unam.mx}

\author{Bernard Leclerc}
\address{Bernard Leclerc\newline
Universit\'e ́ de Caen\newline
LMNO, CNRS UMR 6139\newline
Institut Universitaire de France\newline
14032 Caen cedex\newline
France}
\email{bernard.leclerc@unicaen.fr}

\author{Jan Schr\"oer}
\address{Jan Schr\"oer\newline
Mathematisches Institut\newline
Universit\"at Bonn\newline
Endenicher Allee 60\newline
53115 Bonn\newline
Germany}
\email{schroer@math.uni-bonn.de}

\subjclass[2010]{Primary 13F60; Secondary 13F15,17B37}


\begin{abstract}
We show that cluster algebras do not contain non-trivial
units and that all cluster variables are irreducible elements.
Both statements follow from Fomin and Zelevinsky's Laurent phenomenon.
As an application
we give a criterion for a cluster algebra to be
a factorial algebra.
This can be used to construct cluster
algebras, which are isomorphic to polynomial rings.
We also study various kinds of upper bounds
for cluster algebras, and we prove that
factorial cluster algebras coincide with their
upper bounds.
\end{abstract}

\maketitle

\setcounter{tocdepth}{1}

\tableofcontents

\parskip2mm



\section{Introduction and main results}


\subsection{Introduction}
The introduction of
cluster algebras by
Fomin and Zelevinsky \cite{FZ1} triggered an extensive theory.
Most results
deal with the combinatorics of seed and quiver mutations,
with
various categorifications of cluster algebras, and 
with cluster phenomena occuring in various areas
of mathematics, like representation theory of
finite-dimensional algebras, 
quantum groups and Lie theory, Calabi-Yau categories,
non-commutative Donaldson-Thomas invariants, Poisson geometry, discrete dynamical systems and
algebraic combinatorics.

On the other hand, there are not many results on cluster algebras themselves.
As a subalgebra of a field, any cluster algebra $\cA$ is
obviously an integral domain.
It is also easy to show that its field of fractions 
$\Quot(\cA)$ is 
isomorphic to a field $K(x_1,\ldots,x_m)$ of rational functions.
Several classes of cluster algebras are known to be finitely
generated, e.g. acyclic cluster algebras 
\cite[Corollary~1.21]{BFZ} and also a class of cluster
algebras arising from Lie theory \cite[Theorem~3.2]{GLSKM}.
Berenstein, Fomin and Zelevinsky gave an example
of a cluster algebra which is not
finitely generated. (One applies \cite[Theorem~1.24]{BFZ}
to the example mentioned in \cite[Proposition~1.26]{BFZ}.)
Only very little is known on further ring theoretic
properties of an arbitrary cluster algebra $\cA$.
Here are some basic questions we would like to address:
\begin{itemize}
\setlength{\itemsep}{9pt}

\item
Which elements in $\cA$ are invertible, irreducible or 
prime?

\item
When is $\cA$ a factorial ring?

\item
When is $\cA$ a polynomial ring?

\end{itemize}
In this paper, we work with cluster algebras of geometric type.

\subsection{Definition of a cluster algebra}
In this section we repeat Fomin and Zelevinsky's definition
of a cluster algebra.

A matrix $A = (a_{ij}) \in M_{n,n}(\Z)$ is 
\emph{skew-symmetrizable}
(resp. \emph{symmetrizable})
if there exists
a diagonal matrix 
$D = {\rm Diag}(d_1,\ldots,d_n) \in M_{n,n}(\Z)$ with positive diagonal
entries $d_1,\ldots,d_n$ 
such that $DA$ is skew-symmetric (resp. symmetric), i.e.
$d_ia_{ij} = -d_ja_{ji}$ (resp. $d_ia_{ij} = d_ja_{ji}$)
for all $i,j $.

Let $m$,$n$ and $p$ be integers with 
$$
m \ge p \ge n \ge 1
\text{\;\;\; and \;\;\;}
m > 1.
$$
Let
$B = (b_{ij}) \in M_{m,n}(\Z)$ be an $(m \times n)$-matrix
with integer entries.
By $B^\circ \in M_{n,n}(\Z)$ we denote the \emph{principal part} of $B$,
which is obtained from $B$ by deleting the last $m-n$ 
rows.

Let $\Delta(B)$ be the graph
with vertices $1,\ldots,m$ and an edge between $i$ and $j$
provided $b_{ij}$ or $b_{ji}$ is non-zero.
We call $B$ \emph{connected} if the graph $\Delta(B)$
is connected.

Throughout, we assume that $K$ is a field of
characteristic 0 or $K = \Z$.
Let $\cF := K(X_1,\ldots,X_m)$ be the field of rational
functions in $m$ variables.

A {\it seed} of $\cF$ is a pair $(\bx,B)$ such that
the following hold:
\begin{itemize}
\setlength{\itemsep}{9pt}

\item[(i)]
$B \in M_{m,n}(\Z)$,

\item[(ii)]
$B$ is connected,

\item[(iii)]
$B^\circ$ is skew-symmetrizable,
 
\item[(iv)]
$\bx = (x_1,\ldots,x_m)$ 
is an $m$-tuple of elements in $\cF$ such that
$x_1,\ldots,x_m$ are algebraically independent
over $K$.

\end{itemize}
For a seed $(\bx,B)$, the matrix $B$ is
the \emph{exchange matrix} of $(\bx,B)$.
We say that $B$ has \emph{maximal rank} if $\rk(B) = n$.

Given a seed $(\bx,B)$ and some $1 \le k \le n$  
we define the {\it mutation} of $(\bx,B)$
at $k$ as
$$
\mu_k(\bx,B) := (\bx',B'),
$$ 
where $B' = (b_{ij}')$ is defined as
$$
b_{ij}'
:=
\begin{cases}
- b_{ij} & \text{if $i=k$ or $j=k$},\\
b_{ij} + \dfrac{|b_{ik}|b_{kj} + b_{ik}|b_{kj}|}{2} &
\text{otherwise},
\end{cases}
$$
and $\bx' = (x_1',\ldots,x_m')$ is defined as
$$
x_s' :=
\begin{cases}
x_k^{-1}\prod_{b_{ik} > 0} x_i^{b_{ik}} + 
x_{k}^{-1}\prod_{b_{ik} < 0} x_i^{-b_{ik}} &
\text{if $s=k$},\\
x_s & \text{otherwise}.
\end{cases}
$$
The equality
\begin{equation}\label{mutationrelation}
x_kx_k' = \prod_{b_{ik} > 0} x_i^{b_{ik}} + 
\prod_{b_{ik} < 0} x_i^{-b_{ik}}
\end{equation}
is called an \emph{exchange relation}.
We write
$$
\mu_{(\bx,B)}(x_k) := x_k'
$$
and 
$$
\mu_k(B) := B'.
$$
It is easy to check that $(\bx',B')$ 
is again a seed.
Furthermore, we have $\mu_k\mu_k(\bx,B) = (\bx,B)$.

Two seeds $(\bx,B)$ and $(\by,C)$ are 
\emph{mutation equivalent}
if there exists a sequence $(i_1,\ldots,i_t)$ 
with $1 \le i_j \le n$ for all $j$ such that
$$
\mu_{i_t} \cdots \mu_{i_2}\mu_{i_1}(\bx,B) = (\by,C).
$$
In this case, we write
$(\by,C) \sim (\bx,B)$.
This yields an equivalence relation on all seeds of $\cF$.
(By definition $(\bx,B)$ is also mutation equivalent to itself.)

For a seed
$(\bx,B)$ of $\cF$ 
let 
$$
\cX_{(\bx,B)}
:= \bigcup_{(\by,C) \sim (\bx,B)} \{y_1,\ldots,y_n\},
$$ 
where the union is over all seeds $(\by,C)$ 
with
$(\by,C) \sim (\bx,B)$.
By definition, the \emph{cluster algebra} $\cA(\bx,B)$ 
associated to $(\bx,B)$ is the $L$-subalgebra of $\cF$ generated
by $\cX_{(\bx,B)}$, where
$$
L := K[x_{n+1}^{\pm 1},\ldots,x_p^{\pm 1},x_{p+1},\ldots,x_m]
$$
is the localization of the polynomial ring $K[x_{n+1},\ldots,x_m]$
at $x_{n+1} \cdots x_p$. (For $p=n$ we set
$x_{n+1} \cdots x_p := 1$.) 
Thus $\cA(\bx,B)$ is the $K$-subalgebra of
$\cF$ generated by 
$$
\{ x_{n+1}^{\pm 1},\ldots,x_p^{\pm 1},x_{p+1},\ldots,x_m \} \cup \cX_{(\bx,B)}.
$$
The elements of  $\cX_{(\bx,B)}$ are
the \emph{cluster variables} of $\cA(\bx,B)$.

We call $(\by,C)$ a \emph{seed} of $\cA(\bx,B)$
if $(\by,C) \sim (\bx,B)$.
In this case, for any $1 \le k \le n$ we call
$(y_k,\mu_{(\by,C)}(y_k))$ an \emph{exchange pair} of 
$\cA(\bx,B)$.
Furthermore, the $m$-tuple $\by$ is a \emph{cluster} of $\cA(\bx,B)$, and monomials of the
form $y_1^{a_1}y_2^{a_2} \cdots y_m^{a_m}$ with $a_i \ge 0$
for all $i$ are called
\emph{cluster monomials} of $\cA(\bx,B)$.

Note that for any cluster $\by$ of $\cA(\bx,B)$ we
have $y_i = x_i$ for all $n +1\le i \le m$.
These $m-n$ elements are the 
\emph{coefficients} of $\cA(\bx,B)$.
There are no invertible coefficients if $p=n$.

Clearly, for any two seeds of the form 
$(\bx,B)$ and $(\by,B)$ there
is an algebra isomorphism
$\eta\df \cA(\bx,B) \to \cA(\by,B)$ with
$\eta(x_i) = y_i$ for all $1 \le i \le m$, which respects the
exchange relations.
Furthermore, if $(\bx,B)$ and $(\by,C)$ are mutation equivalent seeds, then $\cA(\bx,B) = \cA(\by,C)$ and
we have $K(x_1,\ldots,x_m) = K(y_1,\ldots,y_m)$.

\subsection{Trivial cluster algebras and connectedness of 
exchange matrices}
Note that we always assume $m>1$.
For $m=1$ we would get the trivial cluster algebra
$\cA(\bx,B)$ with exactly two cluster variables, namely
$x_1$ and $x_1' := \mu_{(\bx,B)}(x_1) = x_1^{-1}(1+1)$.
In particular, for $K \not= \Z$, both cluster variables
are invertible in $\cA(\bx,B)$, and 
$\cA(\bx,B)$ is just the Laurent polynomial ring
$K[x_1^{\pm 1}]$.

Furthermore, for any seed $(\bx,B)$ of $\cF$
the exchange matrix $B$ is by definition connected. 
For non-connected $B$ one could write $\cA(\bx,B)$ as
a product $\cA(\bx_1,B_1) \times \cA(\bx_2,B_2)$ of
smaller cluster algebras and study the factors $\cA(\bx_i,B_i)$ separately.
The connectedness assumption 
also ensures that there are no 
exchange relations of the
form $x_kx_k' = 1+1$.

\subsection{The Laurent phenomenon}\label{introlaurent}
It follows by induction from the exchange relations
that for any cluster $\by$ of $\cA(\bx,B)$, any cluster 
variable $z$ of $\cA(\bx,B)$ is of the form
$$
z = \frac{f}{g},
$$
where $f,g \in \N[y_1,\ldots,y_m]$ are integer polynomials
in the cluster variables $y_1,\ldots,y_m$ with non-negative coefficients.
For any seed $(\bx,B)$ of $\cF$ let
$$
\cL_\bx := K[x_1^{\pm 1},\ldots,x_n^{\pm 1},
x_{n+1}^{\pm 1},\ldots,
x_p^{\pm 1},x_{p+1},\ldots,x_m]
$$
be the localization of  
$K[x_1,\ldots,x_m]$ at $x_1x_2\cdots x_p$,
and let
$$
\cL_{\bx,\Z} := \Z[x_1^{\pm 1},\ldots,x_n^{\pm 1},
x_{n+1},\ldots,x_m]
$$
be the localization of  
$\Z[x_1,\ldots,x_m]$ at
$x_1x_2 \cdots x_n$.
We consider $\cL_\bx$ and $\cL_{\bx,\Z}$ as subrings of the field $\cF$.
The following remarkable result, known as the \emph{Laurent phenomenon}, is due to Fomin and Zelevinsky and is our key tool to derive some ring theoretic
properties of cluster algebras.

\begin{Thm}[{{\cite[Theorem~3.1]{FZ1}},
{\cite[Proposition~11.2]{FZ2}}}]\label{Laurent}
For each seed $(\bx,B)$ of $\cF$ we
have 
$$
\cA(\bx,B) \subseteq \overline{\cA}(\bx,B) := 
\bigcap_{(\by,C) \sim (\bx,B)} \cL_{\by}
$$
and
$$
\cX_{(\bx,B)} \subset  
\bigcap_{(\by,C) \sim (\bx,B)} \cL_{\by,\Z}.
$$
\end{Thm}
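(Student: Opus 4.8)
The plan is to deduce both inclusions from the Laurent phenomenon of Fomin and Zelevinsky, reducing first the algebra statement to the statement about individual cluster variables. Since the coefficients satisfy $y_i = x_i$ for $n+1 \le i \le m$ and \emph{every} seed $(\by,C) \sim (\bx,B)$, the ring $L$ is contained in each localization $\cL_{\by}$, hence in $\overline{\cA}(\bx,B)$; moreover $\cL_{\by,\Z} \subseteq \cL_{\by}$ because $\Z \subseteq K$ and the frozen variables are among the generators of $\cL_\by$. As $\cA(\bx,B)$ is by definition the $K$-algebra generated by $L$ together with $\cX_{(\bx,B)}$, it follows that once we know $\cX_{(\bx,B)} \subseteq \cL_{\by,\Z}$ for every $(\by,C) \sim (\bx,B)$, we get $\cA(\bx,B) \subseteq \cL_{\by}$ for every such seed, i.e. $\cA(\bx,B) \subseteq \overline{\cA}(\bx,B)$. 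So everything reduces to the second assertion, and for that it suffices to fix one seed, say $(\bx,B)$ itself, and show that every cluster variable lies in $\cL_{\bx,\Z}$; applying this to each seed in the mutation class then yields the full claim.

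For the cluster-variable statement I would argue by induction on the length $t$ of a shortest mutation sequence producing a seed that contains the given cluster variable $z$. For $t=0$ the variable is one of $x_1,\ldots,x_m$ and there is nothing to prove. In the inductive step one is reduced to analysing a single exchange relation $z\,z' = M_1 + M_2$, with $M_1,M_2$ monomials in the remaining cluster variables of the relevant seed, and must show that $z' \in \cL_{\bx,\Z}$ given that $z$ and the variables occurring in $M_1,M_2$ already lie there. The delicate point is the division: writing the reduced fractions of $z$ and of $M_1+M_2$ in the unique factorization domain $\Z[x_1^{\pm 1},\ldots,x_m^{\pm 1}]$, one has to show that the numerator of $z$ divides the numerator of $M_1+M_2$ after clearing denominators, and this divisibility is emphatically \emph{not} visible from the inductive hypothesis step by step.

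To control it I would follow Fomin and Zelevinsky's ``Caterpillar Lemma'': reorganize the mutation path from $(\bx,B)$ to the seed of $z$ so that its last steps alternate between just two mutation directions, which reduces the required divisibility to an explicit localized rank-two computation, and keep track of coprimality of the Laurent-polynomial numerators and denominators produced along the path, using repeatedly that $\Z[x_1^{\pm 1},\ldots,x_m^{\pm 1}]$ is a UFD. The arithmetic input that makes this work is that mutation is an involution and that the two monomials $\prod_{b_{ik}>0}x_i^{b_{ik}}$ and $\prod_{b_{ik}<0}x_i^{-b_{ik}}$ in \eqref{mutationrelation} are coprime, which is ultimately what forces the cancellation. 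This bookkeeping --- verifying that the factor one divides by genuinely cancels --- is the main obstacle; everything else is formal. In practice I expect the cleanest route is not to redo this from scratch but to invoke \cite[Theorem~3.1]{FZ1} together with the treatment of coefficients in \cite[Proposition~11.2]{FZ2}, and merely to observe that the present geometric-type setup (frozen variables $x_{n+1},\ldots,x_m$, with $x_{n+1},\ldots,x_p$ invertible) is the special case covered there.
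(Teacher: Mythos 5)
The paper gives no proof of this statement at all: it is quoted verbatim from Fomin--Zelevinsky, with the attributions \cite[Theorem~3.1]{FZ1} and \cite[Proposition~11.2]{FZ2} serving as the entire justification, which is exactly where your proposal ends up. Your reduction of the algebra inclusion to the cluster-variable inclusion and your sketch of the caterpillar-lemma induction are a faithful outline of the original Fomin--Zelevinsky argument, so the approach matches the paper's (i.e.\ cite the source), and nothing more is required here.
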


The algebra $\overline{\cA}(\bx,B)$ is called the
\emph{upper cluster algebra} associated to $(\bx,B)$,
compare \cite[Section~1]{BFZ}.

\subsection{Upper bounds}
For a seed $(\bx,B)$ and $1 \le k \le n$ let
$(\bx_k,B_k) := \mu_k(\bx,B)$. 
Berenstein, Fomin and Zelevinsky \cite{BFZ} called
$$
U(\bx,B) := \cL_\bx \cap \bigcap_{k=1}^n \cL_{\bx_k}
$$
the \emph{upper bound} of $\cA(\bx,B)$.
They prove the following:

\begin{Thm}[{{\cite[Corollary~1.9]{BFZ}}}]
Let $(\bx,B)$ and $(\by,C)$ be mutation equivalent
seeds of $\cF$.
If $B$ has maximal rank and $p=m$,
then $U(\bx,B) = U(\by,C)$. 
In particular, we have
$\overline{\cA}(\bx,B) = U(\bx,B)$.
\end{Thm}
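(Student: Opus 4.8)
The plan is to follow the strategy of Berenstein, Fomin and Zelevinsky and reduce the global statement to a purely local one: it suffices to show that the upper bound is invariant under a single mutation, i.e.\ that $U(\bx,B) = U(\bx_k,B_k)$ for every $1 \le k \le n$, since mutation equivalence is generated by single mutations and since the full intersection $\overline{\cA}(\bx,B) = \bigcap_{(\by,C)\sim(\bx,B)}\cL_{\by}$ will then coincide with any single $U(\by,C)$ once we know all the $U$'s are equal (the inclusion $\overline{\cA}(\bx,B)\subseteq U(\bx,B)$ being trivial from the definitions, and the reverse inclusion following because every $\cL_{\by}$ in the big intersection is one of the finitely many terms appearing in some $U(\by,C)$ along a mutation path, all of which equal $U(\bx,B)$).

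First I would fix notation for a single mutation $\mu_k$ and observe the key structural fact: the three Laurent rings $\cL_\bx$, $\cL_{\bx_k}$, and the ``doubly mutated'' ring share all cluster variables $x_j$ with $j\ne k$, and the only difference between the seeds $(\bx,B)$ and $(\bx_k,B_k)$ is in the $k$-th coordinate, where $x_k$ is replaced by $x_k'$ via the exchange relation \eqref{mutationrelation}. The plan is then to write everything inside the ambient Laurent ring $\cL_{\bx\setminus k} := K[x_1^{\pm1},\ldots,\widehat{x_k},\ldots]$ in the variables other than $x_k$ (with the appropriate variables inverted), and to describe $\cL_\bx\cap\cL_{\bx_k}$ and the remaining terms as rings of Laurent polynomials in $x_k$ (resp.\ $x_k'$) over $\cL_{\bx\setminus k}$ subject to divisibility conditions coming from the exchange polynomials. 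The maximal-rank hypothesis enters precisely here: it guarantees that the two exchange monomials $P_k^+ = \prod_{b_{ik}>0}x_i^{b_{ik}}$ and $P_k^- = \prod_{b_{ik}<0}x_i^{-b_{ik}}$, together with the other columns of $B$, are ``independent enough'' that one can separate variables; concretely it makes certain monomials coprime in $\cL_{\bx\setminus k}$, which is what allows the combinatorial matching of the three (or four) intersecting conditions.

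The heart of the argument is a direct but delicate computation showing $U(\bx,B)$ is symmetric in $\bx$ and $\bx_k$, i.e.\ that $\cL_\bx\cap\cL_{\bx_k}\cap\bigcap_{j\ne k}\cL_{\bx_j}$ does not change if we replace the role of $x_k$ by $x_k'$. One packages $U(\bx,B)$ as $\{\, y \in \cL_{\bx\setminus k}[x_k^{\pm1}] : y \in \cL_{\bx_k},\ y\in\cL_{\bx_j}\ \forall j\ne k\,\}$ and shows, using the exchange relation and the explicit mutation formulas for $B$, that the defining conditions are manifestly invariant under $k \leftrightarrow$ (mutated seed). Since $p=m$ there are no extra frozen-but-inverted subtleties beyond bookkeeping, and since $\rk(B)=n$ the linear-algebra input (that the columns of $B^\circ$, extended, have no relations modulo the lattice spanned by the rest) is available.

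I expect the main obstacle to be exactly this separation-of-variables computation: verifying that membership in $\cL_{\bx_k}$ and in the various $\cL_{\bx_j}$ ($j\ne k$) can be read off coprimality conditions in $\cL_{\bx\setminus k}$, and that these conditions are genuinely symmetric under the mutation. This is where the maximal-rank assumption is indispensable---without it the relevant monomials need not be coprime, the ``independence'' of the columns fails, and the clean description of $U$ as an intersection of principal-divisibility conditions breaks down (and indeed the conclusion itself can fail). The rest---reducing from global to local, and upgrading equality of upper bounds to $\overline{\cA}(\bx,B) = U(\bx,B)$---is essentially formal once the one-step invariance is in hand.
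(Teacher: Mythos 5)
The paper does not prove this theorem: it is imported verbatim from Berenstein--Fomin--Zelevinsky \cite[Corollary~1.9]{BFZ} and used as a black box, so there is no internal proof to compare your attempt against. Judged on its own, your write-up is a faithful outline of the BFZ strategy (reduce to invariance of the upper bound under a single mutation; describe $\cL_\by \cap \cL_{\mu_k(\by)}$ concretely over the Laurent ring in the variables other than $y_k$; use maximal rank to secure a coprimality condition), and your formal reductions are fine: $\overline{\cA}(\bx,B) \subseteq U(\bx,B)$ is trivial, and the reverse inclusion does follow once all upper bounds along mutation paths are known to agree, since each $\cL_\by$ occurs as a term of $U(\by,C)$.

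However, as a proof it has a genuine gap: the entire mathematical content is concentrated in the step you defer. To close it you would need (1) the explicit description of $\cL_\by \cap \cL_{\mu_k(\by)}$ as the set of Laurent polynomials $\sum_r c_r y_k^r$ with $c_r$ in the smaller Laurent ring and $P_k^{\,r}\mid c_{-r}$ for $r>0$, where $P_k = P_k^+ + P_k^-$ is the exchange polynomial; (2) the comparison of $\bigcap_{j\ne k}\cL_{\bx_j}$ with $\bigcap_{j\ne k}\cL_{\mu_j\mu_k(\bx)}$, which in BFZ reduces to an explicit rank-two computation and is where the real work lies; and (3) the precise role of maximal rank, which is not about $P_k^+$ and $P_k^-$ within one column but about pairwise coprimality of the exchange polynomials $P_j$ and $P_k$ for $j\ne k$ (two such binomials share a factor exactly when the corresponding columns of $B$ are equal or opposite, which maximal rank excludes, and maximal rank is preserved under mutation so every seed in the mutation class is coprime). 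Naming these obstacles is not the same as overcoming them, so the proposal should be regarded as a correct plan rather than a proof.
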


For clusters $\by$ and $\bz$ of $\cA(\bx,B)$
define 
$$
U(\by,\bz) := \cL_\by \cap \cL_\bz.
$$

\subsection{Acyclic cluster algebras}
Let $(\bx,B)$ be a seed of $\cF$ with
$B = (b_{ij})$.
Let
$\Sigma(B)$ be the quiver with
vertices $1,\ldots,n$, 
and 
arrows $i \to j$ for all $1 \le i,j \le n$
with $b_{ij} > 0$, compare \cite[Section~1.4]{BFZ}.
So $\Sigma(B)$ encodes the sign-pattern of the
principal part $B^\circ$ of $B$.

The seed $(\bx,B)$ and $B$ are called \emph{acyclic}
if $\Sigma(B)$ does not contain any oriented cycle.
The cluster algebra $\cA(\bx,B)$ is
\emph{acyclic} if there exists an acyclic seed $(\by,C)$
with $(\by,C) \sim (\bx,B)$.

\subsection{Skew-symmetric exchange matrices and quivers}
Let $B = (b_{ij})$ be a matrix in $M_{m,n}(\Z)$ 
such that $B^\circ$ is skew-symmetric.
Let $\Gamma(B)$ be the quiver with vertices $1,\ldots,m$
and $b_{ij}$ arrows $i \to j$ if 
$b_{ij}>0$, and $-b_{ij}$ arrows $j \to i$ if $b_{ij}<0$.
Thus given $\Gamma(B)$, we can recover $B$.
In the skew-symmetric case one often works with quivers
and their mutations instead of exchange matrices.

\subsection{Main results}
For a ring $R$ with $1$, let $R^\times$ be the set of invertible
elements in $R$.
Non-zero rings without zero divisors are called
\emph{integral domains}.
A non-invertible element $a$ in an integral domain $R$ is 
\emph{irreducible}
if it cannot be written as a product $a = bc$ with
$b,c \in R$ both non-invertible.
Cluster algebras are integral domains, since they
are by definition subrings of fields.

\begin{Thm}\label{Main1}
For any seed $(\bx,B)$ of $\cF$ the following hold:
\begin{itemize}
\setlength{\itemsep}{9pt}

\item[(i)]
We have
$\cA(\bx,B)^\times = 
\left\{ \la x_{n+1}^{a_{n+1}} \cdots x_p^{a_p} \mid 
\la \in K^\times, a_i \in \Z \right\}$.

\item[(ii)]
Any cluster variable in $\cA(\bx,B)$ is irreducible.

\end{itemize}
\end{Thm}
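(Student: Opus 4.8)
The plan is to deduce both statements from the Laurent phenomenon (Theorem~\ref{Laurent}), exploiting the fact that the Laurent rings $\cL_\by$ have a very transparent unit group and that the only Laurent monomials common to two clusters related by a mutation are the "frozen" ones.

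\textbf{Part (i).} First I would fix a cluster $\by$ of $\cA(\bx,B)$ and observe that, by Theorem~\ref{Laurent}, $\cA(\bx,B)\subseteq \cL_\by$, a Laurent polynomial ring (when $K$ is a field) or the ring $\cL_{\bx,\Z}$-style localization (when $K=\Z$). The units of such a ring are well understood: $\cL_\by^\times = \{\la\, y_1^{a_1}\cdots y_p^{a_p}\mid \la\in K^\times,\ a_i\in\Z\}$ for a field $K$, and $\{\pm y_1^{a_1}\cdots y_n^{a_n}\}$ for $K=\Z$. So any unit $u\in\cA(\bx,B)^\times$ is simultaneously a unit in every $\cL_\by$, hence is a scalar times a Laurent monomial in the frozen variables $x_{n+1},\dots,x_p$ \emph{and}, for each mutation $\mu_k$, also a Laurent monomial in the cluster $\by_k=\mu_k(\by)$. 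Writing $u = \la\prod_{i} x_i^{a_i}$ with the $a_i$ possibly nonzero for mutable indices $i\le n$, and requiring that this expression also be a Laurent monomial in $\by_k$ for every $k$, forces $a_1=\cdots=a_n=0$: indeed $x_k' = \mu_{(\by,B)}(x_k)$ is a genuine two-term Laurent polynomial (here connectedness of $B$, which rules out $x_kx_k'=1+1$, matters), so $\prod_i x_i^{a_i}$ can be a Laurent monomial in $\by_k$ only if $a_k=0$. This gives the inclusion $\subseteq$; the reverse inclusion is immediate since $x_{n+1},\dots,x_p$ are invertible in $L\subseteq\cA(\bx,B)$ by construction. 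The one case $p=n$ is covered too, with the unit group then being just $K^\times$.

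\textbf{Part (ii).} Let $z$ be a cluster variable, say $z=y_k$ sits inside a cluster $\by$, and suppose $z = ab$ with $a,b\in\cA(\bx,B)$. I would work inside $\cL_\by$, where $z=y_k$ is a single variable, hence (the polynomial ring being a UFD, and $y_k$ prime there) one of $a,b$ — say $a$ — is a unit times a Laurent monomial not involving $y_k$, while $b$ is $y_k$ times a Laurent monomial. Now apply part~(i): $a\in\cA(\bx,B)$ is a Laurent monomial in $\cL_\by$ of the special form, so it is a scalar times a monomial in $y_1,\dots,y_p$, and since it does not involve $y_k$ and lies in $\cA(\bx,B)$, part~(i) forces $a\in\cA(\bx,B)^\times$. (If instead $b$ were the unit, the same argument applies after swapping.) Hence $z$ admits no factorization into two non-units, i.e.\ $z$ is irreducible — provided $z$ itself is not a unit, which again follows from part~(i) since a cluster variable $y_k$ with $k\le n$ is not a monomial in the frozen variables (it involves $y_k$).

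\textbf{Main obstacle.} The delicate point is the passage from "$u$ is a unit of $\cA(\bx,B)$, hence a unit of $\cL_\by$ for \emph{every} cluster $\by$" to "$u$ involves no mutable variable." One must argue carefully that if a Laurent monomial $M$ in the variables of $\by$ is also a Laurent monomial in the variables of $\mu_k(\by)$, then the exponent of $y_k$ in $M$ vanishes — this is where one uses that $\mu_{(\by,B)}(y_k)$ is not a monomial (it is a sum of two coprime monomials, thanks to connectedness), so that $y_k = (\text{binomial})\cdot\prod_{i\ne k}y_i^{\,\ast}$ cannot be inverted to express a monomial in $y_k$ as a monomial in the new cluster unless the $y_k$-exponent was already $0$. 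A secondary, purely bookkeeping nuisance is keeping the $K=\Z$ case ($\cL_{\by,\Z}$, units $\pm$monomials in $y_1,\dots,y_n$ only) and the $p=n$ case (no invertible coefficients) in step with the field case throughout; these should be handled by a parallel remark rather than a separate proof.
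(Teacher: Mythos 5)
Your part (i) is essentially the paper's argument: a unit $u$ of $\cA(\bx,B)$ is a unit of $\cL_\by$ for every cluster $\by$, hence $\la$ times a Laurent monomial $\prod y_i^{a_i}$, and substituting $y_k^*=M_1+M_2$ (with $M_1\neq M_2$ thanks to $m>1$ and connectedness) into the expression of $u$ in the mutated cluster kills $a_k$ for every mutable $k$; the subcases $b_k>0$, $b_k=0$, $b_k<0$ are handled by counting $\ge 2$ distinct monomials, comparing exponents, and passing to $u^{-1}$, exactly as you indicate. One small slip: for $K=\Z$ the relevant ring is still $\cL_\by$, which inverts $y_1,\dots,y_p$ and has unit group $\{\pm\, y_1^{a_1}\cdots y_p^{a_p}\}$; you have confused it with $\cL_{\by,\Z}$, but since everything is uniformly covered by writing $\la\in K^\times$ this is harmless.

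Part (ii) has a genuine gap. After writing $y_k=ab$, the correct deduction is that $a$ and $b$ are both units of $\cL_\by$ (note that $y_k$ is itself a unit of $\cL_\by$ because $k\le p$, so it is not prime there), hence $a=\la'\prod y_i^{a_i}$ and $b=\la''\prod y_i^{b_i}$ with $a_i+b_i=\delta_{ik}$. You then assert that one factor, say $a$, "does not involve $y_k$" and that part (i) "forces $a\in\cA(\bx,B)^\times$". Neither step is justified. First, the $y_k$-exponents of $a$ and $b$ need only sum to $1$; they could be, say, $2$ and $-1$. Second, and more seriously, part (i) is a description of the unit group, not a membership test: an element of $\cA(\bx,B)$ that happens to be a Laurent monomial in one cluster, even one not involving $y_k$, need not be a unit of $\cA(\bx,B)$, since it may carry nonzero exponents on other mutable variables $y_s$ (the cluster variable $y_s$ itself is such an element). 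To conclude that one factor is a unit you must, as the paper does, first kill the exponents $a_s$ for all mutable $s\neq k$ by mutating at $s$ and rerunning the two-monomial argument of part (i); this is available precisely because $y_k$, and hence $a$ and $b$, remain units of $\cL_{\mu_s(\by,C)}$ when $s\neq k$ (mutation at $k$ itself is unavailable, since $a$ need not be a unit of $\cL_{\mu_k(\by,C)}$). One is then left with the case where only the $y_k$-exponents can be nonzero, and there $a_k+b_k=1$ forces one of $a_k,b_k$ to be $\le 0$, making the corresponding factor visibly invertible in $\cA(\bx,B)$ (its inverse is a nonnegative power of $y_k$ times invertible coefficients), which is the desired contradiction. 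This case analysis, and the restriction to mutations at $s\neq k$, are the missing content of your argument.
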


For elements $a,b$ in an integral domain $R$ we write
$a|b$ if there exists some $c \in R$ with $b = ac$.
A non-invertible element $a$ in a commutative ring $R$
is \emph{prime} if whenever $a|bc$ for some $b,c \in R$, then $a|b$ or $a|c$. Every prime element is irreducible, but the converse is not true in general.
Non-zero elements $a,b \in R$ are \emph{associate} if there
is some unit $c \in R^\times$ with $a = bc$.
An integral domain $R$ is \emph{factorial}
if the following hold:
\begin{itemize}
\setlength{\itemsep}{9pt}

\item[(i)]
Every non-zero non-invertible element $r \in R$ can be written as a product $r = a_1 \cdots a_s$ of irreducible elements $a_i \in R$.

\item[(ii)]
If $a_1 \cdots a_s = b_1 \cdots b_t$ with $a_i,b_j \in R$
irreducible for all $i$ and $j$, then $s=t$ and there is a bijection
$\pi\colon \{ 1,\ldots,s \} \to \{ 1,\ldots,s \}$ such that
$a_i$ and $b_{\pi(i)}$ are associate for all $1 \le i \le s$.

\end{itemize}
For example, any polynomial ring
is factorial.
In a factorial ring, all irreducible elements are prime.

Two clusters $\by$ and $\bz$ of a cluster algebra $\cA(\bx,B)$ are \emph{disjoint}
if $\{ y_1,\ldots,y_n \} \cap \{ z_1,\ldots,z_n \} = \varnothing$.

The next result gives a useful criterion when a cluster algebra
is a factorial ring.

\begin{Thm}\label{Main2}
Let $\by$ and $\bz$ be disjoint clusters of $\cA(\bx,B)$.
If there is a subalgebra $U$ of $\cA(\bx,B)$,
such that $U$ is
factorial and
$$
\{ y_1,\ldots,y_n,z_1,\ldots,z_n,x_{n+1}^{\pm 1},\ldots,
x_p^{\pm 1},x_{p+1},\ldots,x_m \} \subset U,
$$
then
$$
U = \cA(\bx,B) = U(\by,\bz).
$$
In particular, $\cA(\bx,B)$ is factorial and all cluster variables
are prime.
\end{Thm}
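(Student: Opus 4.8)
The plan is to prove the chain of inclusions
$$
U \subseteq \cA(\bx,B) \subseteq \overline{\cA}(\bx,B) \subseteq U(\by,\bz) \subseteq U
$$
and deduce equality throughout. The first inclusion is immediate from the hypothesis $\{y_1,\dots,y_n,z_1,\dots,z_n,x_{n+1}^{\pm1},\dots,x_p^{\pm1},x_{p+1},\dots,x_m\}\subset U$ together with the fact that these elements generate $\cA(\bx,B)$ as a $K$-algebra (using that $\by$ is a cluster, so $\cA(\bx,B)=\cA(\by,B')$ for the appropriate $B'$, and every cluster variable of $\cA(\by,B')$ lies in the subalgebra generated by $y_1,\dots,y_m$ once we know $\cA$ equals its upper bound — but more directly, $U$ contains all elements of a cluster and the coefficients, and once we establish $U=U(\by,\bz)$ this generation issue evaporates, so it is cleanest to prove $U\supseteq U(\by,\bz)\supseteq\cA(\bx,B)$ first and only then close the loop). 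The second inclusion is the Laurent phenomenon (Theorem~\ref{Laurent}). The third inclusion, $\overline{\cA}(\bx,B)\subseteq \cL_\by\cap\cL_\bz = U(\by,\bz)$, is also part of Theorem~\ref{Laurent} since $\overline{\cA}(\bx,B)$ is an intersection over \emph{all} mutation-equivalent seeds, which includes the seeds containing $\by$ and $\bz$.

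The crux is the last inclusion $U(\by,\bz)\subseteq U$, and this is where disjointness and factoriality of $U$ enter. Take $w\in U(\by,\bz) = \cL_\by\cap\cL_\bz$. Being in $\cL_\by$, $w$ is a Laurent monomial expression in the $y_i$ with denominator a monomial in $y_1,\dots,y_p$; multiplying by a suitable monomial $m_\by = y_1^{a_1}\cdots y_p^{a_p}$ we get $m_\by w\in K[y_1,\dots,y_m]\subseteq U$. Since $U$ is factorial and, by Theorem~\ref{Main1}(ii) applied inside $\cA(\bx,B)$ — whose units are known by Theorem~\ref{Main1}(i) — each cluster variable $y_1,\dots,y_n$ is irreducible, hence \emph{prime} in the factorial ring $U$. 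So in $U$ we may cancel primes: $w = (m_\by w)/m_\by$ lies in $U$ provided each $y_i$ ($1\le i\le n$) appearing in $m_\by$ divides $m_\by w$ in $U$. If some $y_i\nmid (m_\by w)$ in $U$, then since $y_i$ is prime we cannot have $y_i$ dividing the product, and we use the second hypothesis: $w\in\cL_\bz$ means $w$ has a Laurent expansion whose denominator involves only $z_1,\dots,z_p$, none of which is associate to $y_i$ (disjointness of $\by$ and $\bz$, plus the fact that distinct cluster variables are non-associate — this follows from Theorem~\ref{Main1}: if $y_i = \lambda z_j x_{n+1}^{c_{n+1}}\cdots x_p^{c_p}$ one derives a contradiction with algebraic independence / irreducibility). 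Hence the power of $y_i$ in any factorization of $w$ inside $\Quot(U)=\cF$ must be nonnegative, forcing $y_i\mid(m_\by w)$ in $U$ after all. Iterating over all prime factors $y_1,\dots,y_n$ of $m_\by$ yields $w\in U$.

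The main obstacle, and the step requiring the most care, is establishing that distinct cluster variables (in particular the $y_i$ and $z_j$) are \emph{non-associate} in $U$, and more precisely that the $y_i$ remain prime — not merely irreducible — in the subalgebra $U$. Irreducibility in $\cA(\bx,B)$ is Theorem~\ref{Main1}(ii), but an irreducible element of a subring need not stay irreducible, let alone prime. The resolution is that $U$ is \emph{given} to be factorial, so \emph{its} irreducible elements are automatically prime; one only needs that $y_i$ is irreducible \emph{in $U$}, which follows because any factorization $y_i = uv$ in $U\subseteq\cA(\bx,B)$ is a factorization in $\cA(\bx,B)$, where by Theorem~\ref{Main1} one of $u,v$ is a unit of $\cA(\bx,B)$ of the form $\lambda x_{n+1}^{a_{n+1}}\cdots x_p^{a_p}$, and all such units already lie in $U$ and are units of $U$ by hypothesis. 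With primality of the $y_i$ (and $z_j$) in hand, the cancellation argument above runs cleanly, giving $U=\cA(\bx,B)=U(\by,\bz)$; the final assertions then follow, since in a factorial ring irreducibles are prime, so by Theorem~\ref{Main1}(ii) every cluster variable of $\cA(\bx,B)=U$ is prime.
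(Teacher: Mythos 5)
Your proof is correct and follows essentially the same route as the paper's: reduce everything to the single inclusion $U(\by,\bz)\subseteq U$ via the Laurent phenomenon, then clear denominators and use irreducibility of the cluster variables, non-associateness of the disjoint clusters $\by$ and $\bz$, and factoriality of $U$ to cancel the monomial $y_1^{a_1}\cdots y_n^{a_n}$. The one place where you are more explicit than the paper --- observing that irreducibility in $\cA(\bx,B)$ descends to irreducibility in $U$ precisely because the units $\la x_{n+1}^{a_{n+1}}\cdots x_p^{a_p}$ of $\cA(\bx,B)$ already lie in $U$ and are invertible there --- is a small step the paper glosses over, and you fill it correctly.
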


We obtain the following corollary on upper bounds of
factorial cluster algebras.

\begin{Cor}\label{Main3}
Assume that $\cA(\bx,B)$ is factorial.
\begin{itemize}
\setlength{\itemsep}{9pt}

\item[(i)]
If $\by$ and $\bz$ are
disjoint clusters of $\cA(\bx,B)$, then 
$\cA(\bx,B) = U(\by,\bz)$.

\item[(ii)]
For any $(\by,C) \sim (\bx,B)$ we have
$\cA(\bx,B) = U(\by,C)$.

\end{itemize}
\end{Cor}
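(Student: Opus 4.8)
The plan is to deduce both parts directly from Theorem \ref{Main2} by producing, in each case, a suitable factorial subalgebra $U$ containing the required generators. For part (i), suppose $\by$ and $\bz$ are disjoint clusters of $\cA(\bx,B)$. Since $\cA(\bx,B)$ is assumed to be factorial, the obvious candidate is $U := \cA(\bx,B)$ itself: it is factorial by hypothesis, it is a subalgebra of itself, and it certainly contains $y_1,\ldots,y_n,z_1,\ldots,z_n$ (these are cluster variables) as well as $x_{n+1}^{\pm 1},\ldots,x_p^{\pm 1},x_{p+1},\ldots,x_m$ (the coefficients and inverted coefficients). Thus the hypotheses of Theorem \ref{Main2} are satisfied with this choice of $U$, and the theorem yields $\cA(\bx,B) = U = U(\by,\bz)$, which is exactly the claim.

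For part (ii), let $(\by,C) \sim (\bx,B)$ be any mutation-equivalent seed, so that $\by$ is a cluster of $\cA(\bx,B)$. The strategy is to exhibit a second cluster $\bz$ disjoint from $\by$ and then invoke part (i) together with the definition $U(\by,\bz) = \cL_\by \cap \cL_\bz \subseteq \cL_\by$. The natural way to obtain such a $\bz$ is to mutate $\by$ successively at each of the vertices $1, 2, \ldots, n$ of its principal part: set
$$
\bz := \text{the cluster of } \mu_n \mu_{n-1} \cdots \mu_1(\by,C).
$$
One must check that $\bz$ is disjoint from $\by$, i.e.\ that after mutating at every index $k \in \{1,\ldots,n\}$, none of the original cluster variables $y_1,\ldots,y_n$ survives. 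Here the key point is that mutation at $k$ replaces $y_k$ by a genuinely new cluster variable (the exchange relation \eqref{mutationrelation} shows $\mu_{(\by,C)}(y_k) \neq y_k$, using that $B$ is connected so the right-hand side is not $1+1$ and the two Laurent monomials are distinct), and that once $y_k$ has been removed it is not reintroduced by subsequent mutations at indices $j > k$, since $\mu_j$ for $j \neq k$ leaves the $k$-th entry of the cluster unchanged. Once disjointness of $\by$ and $\bz$ is established, part (i) gives $\cA(\bx,B) = U(\by,\bz) \subseteq \cL_\by$, and since $\cA(\bx,B) \supseteq \{y_1,\ldots,y_n\}$ together with the (possibly inverted) coefficients generate a subring of $\cL_\by$ and $\cX_{(\bx,B)} \subseteq \cL_\by$ by the Laurent phenomenon (Theorem \ref{Laurent}), we also have $\cA(\bx,B) \subseteq \cL_\by$; more directly, $U(\by,C) = \cL_\by \cap \bigcap_{k=1}^n \cL_{\by_k}$ satisfies $U(\by,\bz) \subseteq U(\by,C) \subseteq \cL_\by$ is not quite the containment needed, so one simply observes $\cA(\bx,B) \subseteq \overline{\cA}(\by,C) \subseteq U(\by,C) \subseteq U(\by,\bz) = \cA(\bx,B)$, forcing equality throughout.

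The main obstacle is the disjointness verification in part (ii): one needs to be careful that the sequence of mutations $\mu_1,\ldots,\mu_n$ genuinely produces a cluster sharing no variable with $\by$. The argument that a single mutation $\mu_k$ strictly changes the $k$-th variable relies on the exchange relation being nontrivial, which is where the standing connectedness assumption on $B$ enters; and the argument that later mutations do not restore an earlier-removed variable is immediate from the mutation rule, which only touches the entry being mutated. A clean alternative that sidesteps any combinatorial bookkeeping is to note that $\overline{\cA}(\bx,B) = \bigcap_{(\bw,D)\sim(\bx,B)} \cL_\bw \subseteq \cL_\by$ already, and combine with $\cA(\bx,B) = \overline{\cA}(\bx,B)$ — but that last equality is precisely what one is trying to prove, so one does need the disjoint-cluster input from Theorem \ref{Main2} and hence the construction above. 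I would therefore present part (ii) by first constructing $\bz$ as the iterated mutation, dispatching disjointness in a short lemma or inline remark, and then chaining the inclusions $\cA(\bx,B) \subseteq \overline{\cA}(\by,C) \subseteq U(\by,C) \subseteq U(\by,\bz) \overset{(i)}{=} \cA(\bx,B)$.
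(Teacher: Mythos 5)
Your part (i) is exactly the paper's argument: take $U=\cA(\bx,B)$ itself in Theorem~\ref{Main2}. That is correct.

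Part (ii), however, has a genuine gap. Your chain $\cA(\bx,B)\subseteq\overline{\cA}(\by,C)\subseteq U(\by,C)\subseteq U(\by,\bz)\overset{(i)}{=}\cA(\bx,B)$ hinges on the inclusion $U(\by,C)\subseteq U(\by,\bz)$, i.e.\ on $\cL_\by\cap\bigcap_{k=1}^n\cL_{\mu_k(\by)}\subseteq\cL_\bz$, where $\bz$ is obtained from $\by$ by $n$ successive mutations. There is no reason for an element that is Laurent in $\by$ and in each singly-mutated cluster to be Laurent in a far-away cluster $\bz$; that statement is essentially the seed-independence of upper bounds, which Berenstein--Fomin--Zelevinsky only prove under the maximal rank hypothesis (and with $p=m$), and which is precisely the kind of conclusion one is trying to reach here. (The reverse inclusion $U(\by,\bz)\supseteq U(\by,C)$ is not available either, and $U(\by,\bz)\subseteq\cL_\by$ only recovers the easy direction.) A secondary problem is your disjointness argument for $\by$ and $\bz=\mu_n\cdots\mu_1(\by)$: you rule out that $\mu_k$ leaves $y_k$ fixed and that later mutations restore position $k$, but not that the \emph{new} variable created at step $k$ coincides with some \emph{other} old variable $y_j$, $j<k$ (which is no longer in the current cluster, so algebraic independence does not exclude it). The paper proves this disjointness only as a separate proposition, under the additional hypothesis that cluster monomials are linearly independent, and explicitly flags that it is not known in general.

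The paper's actual proof of (ii) avoids disjoint clusters entirely. Given $u\in U(\by,C)$, write $u=f/(y_1^{a_1}\cdots y_p^{a_p})=f_k/(y_1^{b_1}\cdots(y_k^*)^{b_k}\cdots y_p^{b_p})$ with $y_k^*=\mu_{(\by,C)}(y_k)$, cross-multiply to get an identity in $\cA(\bx,B)$, and use that $y_1,\ldots,y_n,y_1^*,\ldots,y_n^*$ are pairwise non-associate (Corollary~\ref{associate}(ii)) and irreducible (Theorem~\ref{Thmirr}) in the factorial ring $\cA(\bx,B)$ to conclude that $y_k^{a_k}$ divides $f$; running over all $k$ shows $y_1^{a_1}\cdots y_n^{a_n}$ divides $f$, hence $u\in\cA(\bx,B)$. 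You should replace your chain of inclusions by this direct divisibility argument, which only uses the $n$ adjacent seeds appearing in the definition of $U(\by,C)$.
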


In Section~\ref{Sec6.2}
we apply the above results to show that many
cluster algebras are polynomial rings.
In
Section~\ref{Sec7} we discuss
some further applications concerning the dual of Lusztig's
semicanonical basis and monoidal categorifications of
cluster algebras.

\subsection{Factoriality and maximal rank}
In Section~\ref{nonfac3} we give examples of 
cluster algebras $\cA(\bx,B)$, which are
not factorial. In these examples, $B$ does not have maximal rank.

After we presented our results at the Abel Symposium in Balestrand in June 2011, 
Zelevinsky asked the following question:

\begin{Problem}\label{question1}
Suppose $(\bx,B)$ is a seed of $\cF$ such that $B$ has maximal rank.
Does it follow that $\cA(\bx,B)$ is factorial?
\end{Problem}

After we circulated a first version of this article, Philipp Lampe
\cite{La}
discovered an example of a non-factorial cluster
algebra $\cA(\bx,B)$ with $B$ having maximal rank.
With his permission, 
we explain a generalization of his example in Section~\ref{nonfac2}.


\section{Invertible elements in cluster algebras}\label{Sec2}


In this section we prove Theorem~\ref{Main1}(i),
classifying the invertible elements of cluster algebras.

The following lemma is straightforward and well-known.

\begin{Lem}\label{lemma1}
For any seed $(\bx,B)$ of $\cF$ we have
$$
\cL_\bx^\times =
\{ \la x_1^{a_1}\cdots x_p^{a_p} \mid \la \in K^\times, a_i \in \Z \}.
$$
\end{Lem}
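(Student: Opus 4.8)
The plan is to identify $\cL_\bx$ explicitly as a localization of a polynomial ring and then use the standard description of units in such a localization. Recall that $\cL_\bx = K[x_1^{\pm 1},\ldots,x_p^{\pm 1},x_{p+1},\ldots,x_m]$ is the localization of the polynomial ring $R := K[x_1,\ldots,x_m]$ at the multiplicative set generated by $x_1x_2\cdots x_p$. The first step is to record that $R$ is a unique factorization domain (being a polynomial ring over a field, or over $\Z$ when $K=\Z$), that the variables $x_1,\ldots,x_m$ are pairwise non-associate prime elements of $R$, and that $R^\times = K^\times$.

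Next I would invoke the general fact that if $S$ is the multiplicative subset of a UFD $R$ generated by a set of primes $\{p_1,\ldots,p_r\}$, then the units of the localization $S^{-1}R$ are exactly the elements of the form $u\, p_1^{a_1}\cdots p_r^{a_r}$ with $u \in R^\times$ and $a_i \in \Z$. The proof of this auxiliary fact is short: one direction is obvious since each $p_i$ becomes invertible in $S^{-1}R$; for the other direction, given $fg^{-1}$ with $f,g \in R$ invertible in $S^{-1}R$, one clears denominators to get $f \cdot (\text{something}) \in S$, so $f$ divides a product of the $p_i$ in $R$, and unique factorization forces $f$ to be $u\,p_1^{a_1}\cdots p_r^{a_r}$ up to a unit of $R$ with $a_i \ge 0$; the same applies to $g$, and the claim follows. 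Applying this with $R = K[x_1,\ldots,x_m]$ and $p_i = x_i$ for $1 \le i \le p$ gives exactly
$$
\cL_\bx^\times = \{ \la x_1^{a_1}\cdots x_p^{a_p} \mid \la \in K^\times,\ a_i \in \Z \},
$$
using $R^\times = K^\times$.

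There is essentially no serious obstacle here; the only points requiring a little care are (a) making sure the statement is uniform in the two allowed cases $\operatorname{char} K = 0$ (or more precisely $K$ a field) and $K = \Z$, which is fine since $\Z[x_1,\ldots,x_m]$ is also a UFD with unit group $\{\pm 1\} = \Z^\times$, so the formula reads correctly with $K^\times$ interpreted as $\{\pm 1\}$; and (b) being explicit that distinct $x_i$ are non-associate primes in $R$, so that the exponents $a_i$ in the normal form are uniquely determined and the two descriptions genuinely coincide. I would state the auxiliary localization lemma, give the two-line argument for it, and then specialize; the whole proof should occupy only a few lines, matching the paper's assertion that the lemma is straightforward and well-known.
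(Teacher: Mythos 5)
Your proof is correct, and it is exactly the standard argument the paper has in mind: the paper offers no proof at all, dismissing the lemma as ``straightforward and well-known,'' and the intended justification is precisely the description of units in a localization of a UFD at a multiplicative set generated by pairwise non-associate primes, which you carry out. Your attention to the algebraic independence of $x_1,\ldots,x_m$ (so that $\cL_\bx$ really is such a localization) and to the case $K=\Z$ is appropriate and complete.
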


\begin{Thm}\label{Thminvert}
For any seed $(\bx,B)$ of $\cF$
we have
$$
\cA(\bx,B)^\times = 
\left\{ \la x_{n+1}^{a_{n+1}} \cdots x_p^{a_p} \mid 
\la \in K^\times, a_i \in \Z \right\}.
$$
\end{Thm}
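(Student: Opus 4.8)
The plan is to prove both inclusions, the non-trivial one being $\cA(\bx,B)^\times \subseteq \{\la x_{n+1}^{a_{n+1}}\cdots x_p^{a_p}\}$. The inclusion $\supseteq$ is immediate: each $x_i$ with $n+1 \le i \le p$ is invertible in the coefficient ring $L$, hence in $\cA(\bx,B)$, and $K^\times$ consists of units; so any monomial $\la x_{n+1}^{a_{n+1}}\cdots x_p^{a_p}$ is a unit.

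For the reverse inclusion, suppose $u \in \cA(\bx,B)^\times$, with inverse $u^{-1} \in \cA(\bx,B)$. First I would apply the Laurent phenomenon (Theorem~\ref{Laurent}): $\cA(\bx,B) \subseteq \cL_\bx$, so both $u$ and $u^{-1}$ lie in $\cL_\bx$, and since $u u^{-1} = 1$, we get $u \in \cL_\bx^\times$. By Lemma~\ref{lemma1}, $u = \la x_1^{a_1}\cdots x_p^{a_p}$ for some $\la \in K^\times$ and $a_i \in \Z$. It remains to show $a_1 = \cdots = a_n = 0$. The point is that $\cL_\bx$ here used the cluster $\bx$, but $\cA(\bx,B)$ is contained in $\cL_\by$ for \emph{every} cluster $\by$ mutation equivalent to $\bx$. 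In particular, fix $k$ with $1 \le k \le n$ and consider the mutated seed $(\bx_k, B_k) = \mu_k(\bx,B)$ with cluster $\bx_k = (x_1,\ldots,x_{k-1}, x_k', x_{k+1},\ldots,x_m)$. Then $u \in \cL_{\bx_k}^\times$ as well, so by Lemma~\ref{lemma1} applied to the seed $(\bx_k, B_k)$, $u$ is (up to a scalar in $K^\times$) a Laurent monomial in $x_1,\ldots,x_{k-1}, x_k', x_{k+1},\ldots,x_p$. Comparing the two monomial expressions for $u$ inside the field $\cF$, and using that $x_1,\ldots,x_m$ are algebraically independent while $x_k' = x_k^{-1}(\prod_{b_{ik}>0} x_i^{b_{ik}} + \prod_{b_{ik}<0} x_i^{-b_{ik}})$ is \emph{not} a monomial in the $x_i$ (the exchange relation binomial on the right has two terms, a consequence of connectedness of $B$ which rules out $1+1$), one forces the exponent of $x_k$ in $u$ to vanish: if $a_k \ne 0$ then $u = \la x_1^{a_1}\cdots x_p^{a_p}$ could only be a Laurent monomial in the $\bx_k$-variables if $x_k$ were expressible as a Laurent monomial in them, which it is not. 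Running this over all $k = 1,\ldots,n$ gives $a_1 = \cdots = a_n = 0$, hence $u = \la x_{n+1}^{a_{n+1}}\cdots x_p^{a_p}$, as desired.

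The step I expect to be the main obstacle is making the last comparison rigorous: precisely, showing that $x_k$ cannot be written as $\pm$ a Laurent monomial in $x_1,\ldots,x_{k-1},x_k',x_{k+1},\ldots,x_m$. The clean way is to write $x_k = x_k' \cdot {}^{-1} \cdot (\text{binomial})$ — i.e. from the exchange relation $x_k x_k' = P$ where $P = \prod_{b_{ik}>0}x_i^{b_{ik}} + \prod_{b_{ik}<0}x_i^{-b_{ik}}$, we get $x_k = P/x_k'$. If $x_k$ were a Laurent monomial $M$ in the $\bx_k$-variables, then $P = M x_k'$ would be a product of a monomial with $x_k'$; but $x_k'$ is itself, after clearing the denominator $x_k$, equal to $P$ — going in circles. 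Instead I would argue directly in the polynomial ring $K[x_1,\ldots,x_{k-1},x_k',x_{k+1},\ldots,x_m]$ (these are algebraically independent, being a cluster): the element $x_k = P x_k'^{-1}$ has a genuine denominator, since $P$, as a polynomial in the original variables, is a sum of two distinct monomials and hence is not divisible by the irreducible "polynomial" relating $x_k'$ to the $x_i$; more carefully, one checks that in $\cF$ the element $x_k$ has negative $x_k'$-adic valuation with respect to the $\bx_k$-coordinate system (its valuation is $-1$), whereas any Laurent monomial in $\bx_k \setminus \{x_k'\}$ together with $x_k'$ that happened to equal $x_k$ would have to absorb this, forcing a contradiction with the two-term (non-monomial) shape of $P$ guaranteed by connectedness. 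I would phrase this via valuations to avoid case analysis: $v_{x_k'}(x_k) = -1 + v_{x_k'}(P) = -1$ since $P$ involves only $x_i$, $i \ne k$, which are part of the $\bx_k$-cluster and so have $v_{x_k'} = 0$; thus $x_k \notin \cL_{\bx_k}$, let alone being a unit there — but it must be, since $x_k \in \cA(\bx,B) \subseteq \cL_{\bx_k}$. Wait: $x_k$ \emph{is} in $\cL_{\bx_k}$ as an element, just not a monomial; the contradiction I actually need is only about $u$, not $x_k$, so the argument is: $u = \la x_k^{a_k} \prod_{i\ne k, i \le p} x_i^{a_i}$ has $v_{x_k'}(u) = -a_k$, but also $u \in \cL_{\bx_k}^\times$ forces $v_{x_k'}(u)$ to be the $x_k'$-exponent in its $\bx_k$-monomial expression, and matching coordinates shows $a_k = 0$ once we observe $x_k$ contributes the $x_k'^{-1}$ but also an honest polynomial factor $P$ that is not a monomial, so no consistent monomial expression exists unless $a_k = 0$. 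I would write this out carefully as the one genuinely technical lemma.
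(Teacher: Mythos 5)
Your argument is correct and is essentially the paper's own proof: both apply the Laurent phenomenon together with Lemma~\ref{lemma1} in a cluster and in its mutation at $k$ to get two monomial expressions for the unit $u$, and both derive the contradiction from the fact that the exchange binomial $M_1+M_2$ has two distinct terms (guaranteed by connectedness of $B$ and $m>1$), so that a nonzero power of it cannot equal a single Laurent monomial in algebraically independent elements. The one step you defer to ``a technical lemma'' --- after matching the $x_k'$-exponents, the residual identity $\la\prod_{i\ne k}x_i^{a_i}\,P^{a_k}=\nu\prod_{i\ne k}x_i^{c_i}$ forces $a_k=0$ --- is precisely the paper's expansion of $(M_1+M_2)^{b_k}$ into $b_k+1\ge 2$ pairwise distinct Laurent monomials, so your outline closes with no additional ideas required.
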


\begin{proof}
Let $u$ be an invertible element in $\cA(\bx,B)$, and let
$(\by,C)$ be any seed of $\cA(\bx,B)$.
By the Laurent phenomenon Theorem~\ref{Laurent} we know that
$\cA(\bx,B) \subseteq \cL_\by$.
It follows that
$u$ is also invertible in $\cL_\by$.
Thus by Lemma~\ref{lemma1} there are $a_1,\ldots,a_p \in \Z$ and
$\la \in K^\times$ such that
$u = \la M$,
where 
$$
M = y_1^{a_1} \cdots y_k^{a_k} \cdots y_p^{a_p}.
$$
If all $a_i$ with $1 \le i \le n$ are zero, we are done.
To get a contradiction, 
assume that there is some $1 \le k \le n$ with $a_k \not= 0$.

Let $y_k^* := \mu_{(\by,C)}(y_k)$.
Again the Laurent phenomenon
yields $b_1,\ldots,b_p \in \Z$ and $\nu \in K^\times$ such that
$$
u = \nu y_1^{b_1} \cdots y_{k-1}^{b_{k-1}}(y_k^*)^{b_k}
y_{k+1}^{b_{k+1}} \cdots y_p^{b_p}.
$$
Without loss of generality let $b_k \ge 0$. (Otherwise we 
can work with $u^{-1}$ instead of $u$.)

If $b_k = 0$, we get 
$$
\la y_1^{a_1} \cdots y_k^{a_k} \cdots y_p^{a_p} 
=  \nu y_1^{b_1} \cdots y_{k-1}^{b_{k-1}}
y_{k+1}^{b_{k+1}} \cdots y_p^{b_p},
$$
where $\la,\nu \in K^\times$.
This is a contradiction, because $a_k \not= 0$ and 
$y_1,\ldots,y_m$ are algebraically independent,
and therefore Laurent monomials in
$y_1,\ldots,y_m$ are linearly independent in $\cF$.

Next, assume that $b_k > 0$.
By definition we have 
$$
y_k^* = M_1 + M_2
$$ 
with
$$
M_1 = y_k^{-1}\prod_{c_{ik}>0}y_i^{c_{ik}}
\text{\;\;\; and \;\;\;}
M_2 = y_k^{-1}\prod_{c_{ik}<0}y_i^{-c_{ik}},
$$ 
where the products run over the positive, respectively negative,
entries in the $k$th column of the matrix $C$.

Thus  we get an equality of the form
\begin{equation}\label{eq1}
u = \la M = 
\nu(y_1^{b_1} \cdots y_{k-1}^{b_{k-1}})(M_1 + M_2)^{b_k}
(y_{k+1}^{b_{k+1}} \cdots y_p^{b_p}).
\end{equation}
We know that $M_1 \not= M_2$.
(Here we use that $m > 1$ and that exchange matrices are by definition connected. Otherwise, one could get exchange relations
of the form $x_kx_k' = 1+1$.)
Thus the right-hand side of Equation~(\ref{eq1}) is
a non-trivial linear combination of 
$b_k+1 \ge 2$ pairwise different 
Laurent monomials 
in $y_1,\ldots,y_m$.
This is again a contradiction, since $y_1,\ldots,y_m$ are algebraically
independent.
\end{proof}

\begin{Cor}\label{associate}
For any seed $(\bx,B)$ of $\cF$
the following hold:
\begin{itemize}

\item[(i)] 
Let $y$ and $z$ be non-zero elements in
$\cA(\bx,B)$.
Then $y$ and $z$
are associate if and only if there
exist $a_{n+1},\ldots,a_p \in \Z$ 
and $\la \in K^\times$ with 
$$
y= \la x_{n+1}^{a_{n+1}} \cdots x_p^{a_p}z.
$$

\item[(ii)]
Let $y$ and $z$ be cluster variables of
$\cA(\bx,B)$.
Then $y$ and $z$
are associate if and only if 
$y=z$. 

\end{itemize}
\end{Cor}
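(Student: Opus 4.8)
The statement to prove is Corollary~\ref{associate}, which characterizes when two elements of $\cA(\bx,B)$ are associate, and specializes this to cluster variables. Part~(i) is essentially a direct unwinding of the definition of ``associate'' combined with Theorem~\ref{Thminvert}. The plan is: $y$ and $z$ are associate iff $y = cz$ for some unit $c \in \cA(\bx,B)^\times$; by Theorem~\ref{Thminvert} the units are precisely the elements of the form $\la x_{n+1}^{a_{n+1}} \cdots x_p^{a_p}$ with $\la \in K^\times$ and $a_i \in \Z$, so substituting $c = \la x_{n+1}^{a_{n+1}} \cdots x_p^{a_p}$ gives the claimed formula, and conversely any such expression is a unit so $y$ and $z$ are associate. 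This direction is immediate and requires no real work.

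\textbf{The substantive direction is (ii).} One implication is trivial: if $y = z$ then certainly $y = 1 \cdot z$, so they are associate. For the converse, suppose $y$ and $z$ are cluster variables that are associate. By part~(i) we have $y = \la x_{n+1}^{a_{n+1}} \cdots x_p^{a_p}\, z$ for some $\la \in K^\times$ and integers $a_{n+1}, \ldots, a_p$. The goal is to show all $a_i = 0$ and $\la = 1$. Here I would fix a seed $(\by,C)$ containing $z$ as one of its cluster variables, say $z = y_j$ for some $1 \le j \le n$ (such a seed exists by definition of cluster variable). By the Laurent phenomenon (Theorem~\ref{Laurent}), $y \in \cL_{\by,\Z}$, i.e. $y$ is a Laurent polynomial in $y_1, \ldots, y_n$ with coefficients involving only $y_{n+1}, \ldots, y_m$ in a polynomial (non-inverted) way, and in fact with integer coefficients; likewise $z = y_j$ is such a Laurent polynomial (trivially, it equals $y_j$). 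Both $y$ and $z$, expressed in the cluster $\by$, are single Laurent \emph{monomials} up to the scalar relation, so comparing: $y = \la x_{n+1}^{a_{n+1}} \cdots x_p^{a_p}\, y_j$ forces, by algebraic independence of $y_1, \ldots, y_m$ (so linear independence of Laurent monomials in $\cF$), that $y$ is itself a Laurent monomial in the $y_i$. But $y$ is a cluster variable, hence lies in $\bigcap \cL_{\by',\Z}$ over all seeds, and the only cluster variables that are Laurent monomials in a given cluster are the elements of that cluster (this uses the $N[y_1,\dots,y_m]$-form of cluster variables together with the Laurent phenomenon). So $y = y_i$ for some $i$; comparing monomials in the relation $y_i = \la x_{n+1}^{a_{n+1}} \cdots x_p^{a_p}\, y_j$ forces $i = j$, all $a_k = 0$, and $\la = 1$, whence $y = z$.

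\textbf{Main obstacle.} The delicate point is justifying that a cluster variable which happens to be a Laurent monomial (times a scalar and coefficient monomial) in some cluster $\by$ must actually be one of the variables $y_1, \ldots, y_m$ of that cluster, rather than some more exotic monomial like $y_1 y_2$ or $y_1^{-1}$. The key facts are: (a) every cluster variable $z$ has an expansion $z = f/g$ with $f, g \in \N[y_1,\ldots,y_m]$ (stated in Section~\ref{introlaurent}), and (b) by the Laurent phenomenon $z \in \cL_{\by}$, so $z$ is a genuine Laurent polynomial in $y_1,\dots,y_n$ over $K[y_{n+1},\dots,y_m]$ (with $y_{n+1},\dots,y_p$ invertible). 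If $z$ is a Laurent monomial $\la \bf{y}^{\bf a}$, positivity of the numerator and denominator of its expansion forces $\la > 0$ and the monomial to have a specific shape; combined with the fact (from Corollary~\ref{associate}(i) applied within the cluster, or directly) that $z$ cannot be a unit unless it is a coefficient, one pins down $z = y_i$. I expect this to be the step requiring the most care; everything else is bookkeeping with Theorem~\ref{Thminvert} and linear independence of monomials.
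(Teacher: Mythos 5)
Part (i) and the easy direction of (ii) are fine and agree with the paper. For the converse of (ii), however, there is a genuine gap at exactly the point you flag as the ``main obstacle''. Having written $y = \la x_{n+1}^{a_{n+1}} \cdots x_p^{a_p}\, y_j$ in a cluster $\by$ containing $z = y_j$, the integral Laurent phenomenon $y \in \cL_{\by,\Z} = \Z[y_1^{\pm 1},\ldots,y_n^{\pm 1},y_{n+1},\ldots,y_m]$ together with positivity of the expansion $y=f/g$, $f,g\in\N[y_1,\ldots,y_m]$, does give $\la \in \Z_{>0}$ and $a_{n+1},\ldots,a_p \ge 0$ --- but nothing more. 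Your proposed bridge, that a cluster variable which is a Laurent monomial in a given cluster must be an element of that cluster, is not established by ``positivity plus non-invertibility'': those facts are perfectly consistent with, say, $y = 2y_j$ (over $K=\Q$) or $y = x_{n+1}y_j$ (when $n+1\le p$), and these are precisely the associate-but-unequal possibilities you must exclude. Beyond what irreducibility (Theorem~\ref{Thmirr}, proved independently) already rules out (shapes like $y_1y_2$, $y_1^2$, $y_1y_2^{-1}$), the remaining content of your auxiliary claim is statement (ii) itself, so the step is essentially circular.

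The missing idea is symmetry: invert the relation to $z = \la^{-1} x_{n+1}^{-a_{n+1}} \cdots x_p^{-a_p}\, y$ and apply the same argument --- uniqueness of the reduced Laurent expansion with numerator in $\Z[\,\cdot\,]$ not divisible by any initial cluster variable --- to $z$ with respect to a cluster containing $y$. This forces $\la^{-1} \in \Z$ and $-a_{n+1},\ldots,-a_p \ge 0$, hence $a_k = 0$ for all $k$ and $\la = \pm 1$; the case $\la = -1$ is then excluded because $z = f/g$ with $f,g \in \N[y_1,\ldots,y_m]$ would give $f + y g = 0$, contradicting algebraic independence. This two-sided use of the $\Z$-Laurent phenomenon is exactly what the paper does, and once you add it, your detour through the classification of cluster variables that are Laurent monomials becomes unnecessary.
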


\begin{proof}
Part (i) follows directly from Theorem~\ref{Thminvert}.
To prove (ii), let $\by$ and $\bz$ be clusters of $\cA(\bx,B)$.
Assume $y_i$ and $z_j$ are associate for some 
$1 \le i,j \le n$.
By (i) there are $a_{n+1},\ldots,a_p \in \Z$ and
$\la \in K^\times$ with
$y_i = \la
x_{n+1}^{a_{n+1}} \cdots x_p^{a_p}z_j$.
By Theorem~\ref{Laurent} we know that 
there exist $b_1,\ldots,b_n \in \Z$ and
a polynomial $f$ in $\Z[z_1,\ldots,z_m]$ 
with
$$
y_i = \frac{f}{z_1^{b_1} \cdots z_n^{b_n}}
$$
and $f$ is not divisible by any $z_1,\ldots,z_n$.
The polynomial $f$ and $b_1,\ldots,b_n$ are uniquely determined by
$y_i$.
It follows that $\la \in \Z$ and $a_{n+1},\ldots,a_p \ge 0$.
But we also have 
$z_j = \la^{-1} x_{n+1}^{-a_{n+1}} \cdots x_p^{-a_p}y_i$.
Reversing the role of $y_i$ and $z_j$ we get 
$-a_{n+1},\ldots,-a_p \ge 0$ and $\la^{-1} \in \Z$.
This implies $y_i = z_j$ or $-y_i = z_j$.
By the remark at the beginning of Section~\ref{introlaurent}
we know that
$z_j = f/g$ for some $f,g \in \N[y_1,\ldots,y_m]$.
Assume that $-y_i = z_j$. We get
$z_j = -y_i = f/g$ and therefore $f+y_ig = 0$.
This is a contradiction to the algebraic independence of
$y_1,\ldots,y_m$.
Thus we proved (ii).
\end{proof}

We thank Giovanni Cerulli Irelli for helping us with the final
step of the proof of
Corollary~\ref{associate}(ii).

Two clusters $\by$ and $\bz$ of a cluster algebra $\cA(\bx,B)$ are \emph{non-associate} if there are no $1 \le i,j \le n$ such that
$y_i$ and $z_j$ are associate.

\begin{Cor}\label{disass}
For clusters $\by$ and $\bz$ of $\cA(\bx,B)$ the
following are equivalent:
\begin{itemize}

\item[(i)]
The clusters
$\by$ and $\bz$ are non-associate.

\item[(ii)]
The clusters
$\by$ and $\bz$ are disjoint.

\end{itemize}
\end{Cor}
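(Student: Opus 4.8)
\textbf{Proof proposal for Corollary~\ref{disass}.}
The plan is to establish the two implications separately, with the bulk of the work going into (ii)$\Rightarrow$(i), while (i)$\Rightarrow$(ii) is essentially a reformulation of Corollary~\ref{associate}(ii). For the direction (i)$\Rightarrow$(ii): I would argue by contraposition. If $\by$ and $\bz$ are not disjoint, then $y_i = z_j$ for some $1 \le i,j \le n$; since any element is associate to itself (taking $\la = 1$ and all $a_k = 0$ in Corollary~\ref{associate}(i)), this exhibits a pair of associate cluster variables, so $\by$ and $\bz$ are not non-associate.

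For the direction (ii)$\Rightarrow$(i): again by contraposition. Suppose $\by$ and $\bz$ are not non-associate, so there are $1 \le i,j \le n$ with $y_i$ and $z_j$ associate. Here is where I would invoke Corollary~\ref{associate}(ii): since $y_i$ and $z_j$ are cluster variables of $\cA(\bx,B)$ that are associate, that corollary forces $y_i = z_j$. Hence $\{y_1,\ldots,y_n\} \cap \{z_1,\ldots,z_n\} \neq \varnothing$, i.e.\ $\by$ and $\bz$ are not disjoint. This closes the loop.

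I do not anticipate a genuine obstacle here, since all the real content is already packaged in Corollary~\ref{associate}: part (i) gives the explicit shape of the unit multiplying one element to obtain an associate, and part (ii) is precisely the statement that associate cluster variables coincide. The only thing to be careful about is that ``non-associate'' and ``disjoint'' are defined as properties of the \emph{pair} of clusters (no $i,j$ with the bad behaviour), so the contrapositive of each is an \emph{existence} statement about some $i,j$; matching up the quantifiers correctly is the entire argument. One could equally phrase it as a direct string of equivalences: $\by,\bz$ not disjoint $\Iff$ $\exists\, i,j$ with $y_i = z_j$ $\Iff$ $\exists\, i,j$ with $y_i, z_j$ associate (by Corollary~\ref{associate}(ii)) $\Iff$ $\by,\bz$ not non-associate. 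Negating throughout yields the stated equivalence, so I would likely write it in this compact form rather than as two separate implications.
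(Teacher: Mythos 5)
Your proposal is correct and follows exactly the paper's argument: the paper likewise notes that non-associate clusters are obviously disjoint (since an element is associate to itself) and derives the converse directly from Corollary~\ref{associate}(ii). Your version merely spells out the quantifier bookkeeping that the paper leaves implicit.
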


\begin{proof}
Non-associate clusters are obviously disjoint.
The converse follows directly from
Corollary~\ref{associate}(ii).
\end{proof}


\section{Irreducibility of cluster variables}\label{Sec3}


In this section we prove Theorem~\ref{Main1}(ii).
The proof is very similar to
the proof of Theorem~\ref{Thminvert}.

\begin{Thm}\label{Thmirr}
Let $(\bx,B)$ be a seed of $\cF$.
Then 
any cluster variable in $\cA(\bx,B)$ is irreducible.
\end{Thm}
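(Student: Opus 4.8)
The plan is to mimic the proof of Theorem~\ref{Thminvert} very closely, replacing the analysis of a unit by the analysis of a putative non-trivial factorization of a cluster variable. Let $z$ be a cluster variable of $\cA(\bx,B)$. Then $z = y_k$ for some cluster $\by$ of $\cA(\bx,B)$ and some $1 \le k \le n$; by the isomorphism $\eta$ identifying cluster algebras with the same exchange matrix, we may assume $z = x_k$ for the base seed, with $k \le n$. By Theorem~\ref{Thminvert}, $x_k$ is not invertible, so suppose for contradiction that $x_k = bc$ with $b,c \in \cA(\bx,B)$ both non-invertible. The Laurent phenomenon, Theorem~\ref{Laurent}, places $b$ and $c$ in $\cL_\bx$; since $x_k$ is visibly a generator of the polynomial ring localized only at $x_1\cdots x_p$, and $\cL_\bx^\times$ is given by Lemma~\ref{lemma1}, I would first argue that in the factorization $x_k = bc$ inside $\cL_\bx$ one of the factors, say $b$, must be $\la$ times a Laurent monomial $x_1^{a_1}\cdots x_p^{a_p}$ and the other must be $\la^{-1} x_k$ times the inverse monomial — i.e.\ $b = \la x_1^{a_1}\cdots x_p^{a_p}$ and $c = \la^{-1} x_1^{-a_1}\cdots x_p^{-a_p} x_k$ with $a_j = 0$ for $j \ne k$ forced by comparing the $x_k$-exponent and the fact that $b,c \in \cL_\bx$ are genuine Laurent polynomials whose product has $x_k$-degree exactly one. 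So without loss of generality $b = \la x_{k}^{0}\cdot(\text{monomial in }x_1,\dots,\widehat{x_k},\dots,x_p)$; more simply, one factor is a Laurent monomial in $x_1,\dots,x_p$ and the other is that monomial's inverse times $x_k$.

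Now comes the step that rules this out, and it is the analogue of the $b_k>0$ case in Theorem~\ref{Thminvert}. The factor $b$ (a Laurent monomial in $x_1,\dots,x_p$ times a scalar) may involve $x_k$ with some exponent; I will show it cannot, by passing to the mutated seed $(\bx',B') = \mu_k(\bx,B)$ and its cluster $\bx'$, in which $x_k$ is replaced by $x_k^* := \mu_{(\bx,B)}(x_k) = M_1 + M_2$ with $M_1 \ne M_2$ (here we again invoke $m>1$ and connectedness of $B$ to guarantee $M_1 \ne M_2$). Applying the Laurent phenomenon with respect to $\bx'$, both $b$ and $c$ lie in $\cL_{\bx'}$, hence are Laurent polynomials in $x_1,\dots,x_{k-1},x_k^*,x_{k+1},\dots,x_m$. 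If $b$ (or $c$) had a non-zero $x_k$-exponent in the original coordinates, then after substitution it would be $\nu\,(\text{monomial})\,(M_1+M_2)^{e}$ for some non-zero integer $e$; if $e>0$ this is a non-trivial $K$-linear combination of $\ge 2$ distinct Laurent monomials in $x_1,\dots,x_m$, and if $e<0$ we work with the reciprocal — but $b$ divides $x_k$ in $\cA(\bx,B)\subseteq \cL_{\bx'}$, and a short degree/support argument shows $x_k = bc$ cannot then be a single Laurent monomial minus nothing... more carefully, since $x_k$ itself, expanded in the $\bx'$-coordinates, is a Laurent polynomial, and $b, c$ are its factors in $\cL_{\bx'}$, I compare: either $b$ is, up to scalar and monomial, equal to $x_k$ (in $\bx'$-coordinates this is $M_1+M_2$), forcing $c$ to be a unit, or $b$ is a monomial in $x_1,\dots,x_p$ with zero $x_k$-exponent. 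Combining the two seeds: $b$ is a scalar times a monomial in $x_1,\dots,x_p$ with zero $x_k$-exponent, and symmetrically so is $c$ after removing the one factor of $x_k$; but then $x_k = bc$ with $bc$ a monomial not involving... wait, one of them carries $x_k$. The clean contradiction: the factor carrying $x_k$, call it $c$, equals (scalar)$\cdot(\text{monomial in }x_1,\dots,x_p)\cdot x_k$ with the monomial part of $c$ times the monomial part of $b$ equal to $1$; since both $b$ and $c$ lie in $\cL_{\bx'}$ as honest Laurent polynomials, and $x_k^* = M_1 + M_2$ appears to the first power in the $\bx'$-expansion of $x_k = c b$, we deduce $b$ has zero $x_k^*$-degree, i.e.\ $b \in \cL_{\bx}^\times \cap \cL_{\bx'}^\times$. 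By Lemma~\ref{lemma1} applied to both seeds, $b$ is a scalar times a monomial in $x_1,\dots,x_p$ that is also a monomial in $x_1,\dots,\widehat{x_k},\dots,x_p,x_k^*$; the only such elements are $\la x_{n+1}^{a_{n+1}}\cdots x_p^{a_p}$, which are units by Theorem~\ref{Thminvert}, contradicting that $b$ is non-invertible.

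So the skeleton is: (1) reduce to $z=x_k$, $k\le n$, via $\eta$; (2) assume $x_k=bc$, use Theorem~\ref{Laurent} for the seed $\bx$ to see $b,c\in\cL_\bx$ are Laurent polynomials and compare $x_k$-degrees so that one factor — say the non-$x_k$-carrying part — is a Laurent monomial in $x_1,\dots,x_p$; (3) use Theorem~\ref{Laurent} for $\mu_k(\bx,B)$ together with $M_1\ne M_2$ (using $m>1$ and connectedness) exactly as in Theorem~\ref{Thminvert} to force the monomial exponents of $x_1,\dots,x_n$ to vanish; (4) conclude via Lemma~\ref{lemma1} and Theorem~\ref{Thminvert} that one of $b,c$ is a unit, contradiction. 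The main obstacle — the place requiring genuine care rather than copying — is step~(2): extracting, from the single relation $x_k = bc$ in the Laurent ring $\cL_\bx$ (whose units are all monomials), the statement that up to units one factor is a pure monomial and the other is $x_k$ times a monomial. This is where the linear independence of Laurent monomials and a bookkeeping of the $x_k$-exponent do the work, and it is the analogue of the ``without loss of generality $b_k\ge 0$'' normalization in the proof of Theorem~\ref{Thminvert}; everything after that is a faithful repetition of that argument with $u$ replaced by $b$.
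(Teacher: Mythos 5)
Your setup is correct and matches the paper's opening moves: since $x_k$ is a unit of $\cL_\bx$, any factorization $x_k=bc$ in $\cA(\bx,B)$ forces $b,c\in\cL_\bx^\times$, so by Lemma~\ref{lemma1} both factors are scalars times Laurent monomials $x_1^{a_1}\cdots x_p^{a_p}$ with exponents summing to the exponent vector of $x_k$. But from there the argument has a genuine gap. First, a small error: comparing exponents only gives $a_j+b_j=0$ for $j\ne k$, not $a_j=0$; the factors can perfectly well be $b=\la x_1x_2^{-1}$ and $c=\la^{-1}x_1^{-1}x_2x_k$, say. Eliminating these exponents for $1\le j\le n$, $j\ne k$, is precisely the hard part, and your step (3) does not do it. You mutate at $k$ itself, but under $\mu_k$ the variables $x_j$ with $j\ne k$ are unchanged, so a monomial such as $x_1$ is a unit of \emph{both} $\cL_\bx$ and $\cL_{\mu_k(\bx)}$. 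Hence your final claim --- that $b\in\cL_\bx^\times\cap\cL_{\mu_k(\bx)}^\times$ forces $b=\la x_{n+1}^{a_{n+1}}\cdots x_p^{a_p}$ --- is false whenever $n\ge 2$. (Also note $\mu_{(\bx,B)}(x_k)=x_k^{-1}(M_1+M_2)$, not $M_1+M_2$, so $x_k$ is \emph{not} a unit of $\cL_{\mu_k(\bx)}$ and Lemma~\ref{lemma1} cannot be applied to its factors there in the first place; your mutation at $k$ only constrains the $x_k$-exponent, which was never the problem.)

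The missing idea, and the one the paper uses, is to mutate at an index $s\ne k$ with $1\le s\le n$ and $a_s\ne 0$ (after disposing of the easy case where all such $a_s$ vanish, in which $b$ or $c$ is visibly a unit of $\cA(\bx,B)$ depending on the sign of $a_k$). Because $s\ne k$, the variable $y_k$ remains a cluster variable of $\mu_s(\by,C)$, so $y_k$ and therefore $b$ and $c$ \emph{are} units of $\cL_{\mu_s(\by,C)}$, and Lemma~\ref{lemma1} applies in that ring too. Comparing the two monomial expressions for $b$ --- one containing $y_s^{a_s}$ with $a_s\ne 0$, the other containing $(y_s^*)^{c_s}$ --- and expanding $(M_1+M_2)^{c_s}$ with $M_1\ne M_2$ then contradicts the linear independence of Laurent monomials, exactly as in the unit classification. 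Without this change of mutation index your skeleton cannot be completed.
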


\begin{proof}
Let $(\by,C)$ be any seed of $\cA(\bx,B)$.
We know from Theorem~\ref{Thminvert} that
the cluster variables of $\cA(\bx,B)$ are
non-invertible in $\cA(\bx,B)$.

Assume that $y_k$ is not irreducible for some 
$1 \le k \le n$.
Thus
$y_k = y_k'y_k''$ for some non-invertible
elements $y_k'$ and $y_k''$ in $\cA(\bx,B)$.
Since $y_k$ is invertible in $\cL_\by$, we know that
$y_k'$ and $y_k''$ are both invertible in $\cL_\by$.
Thus by Lemma~\ref{lemma1} there are $a_i,b_i \in \Z$ 
and $\la',\la'' \in K^\times$ with
$$
y_k'  = \la'y_1^{a_1} \cdots y_s^{a_s} \cdots y_p^{a_p}
\text{\;\;\; and \;\;\;}
y_k'' = \la''y_1^{b_1} \cdots y_s^{b_s} \cdots y_p^{b_p}.
$$
Since $y_k = y_k'y_k''$, we get $a_s+b_s = 0$
for all $s \not= k$ and $a_k + b_k = 1$.

Assume that $a_s = 0$ for all $ 1\le s \le n$ with $s \not= k$.
Then $y_k' = \la'y_k^{a_k}y_{n+1}^{a_{n+1}} \cdots y_p^{a_p}$ and $y_k'' = \la''y_k^{b_k}y_{n+1}^{b_{n+1}} \cdots y_p^{b_p}$.
If $a_k \le 0$, then $y_k'$ is invertible in $\cA(\bx,B)$,
and if $a_k > 0$, then $y_k''$ is invertible in $\cA(\bx,B)$.
In both cases we get a contradiction.

Next assume $a_s \not= 0$
for some $1 \le s \le n$ with $s \not= k$.
Let
$y_s^* := \mu_{(\by,C)}(y_s)$.
Thus we have
$$
y_s^* = M_1 + M_2
$$
with
$$
M_1 = y_s^{-1}\prod_{c_{is}>0} y_i^{c_{is}}
\text{\;\;\; and \;\;\;}
M_2 = y_s^{-1} \prod_{c_{is}<0} y_i^{-c_{is}},
$$
where the products run over the positive, respectively negative, 
entries in the $s$th column of the matrix $C$.

Since $s \not= k$, we see that
$y_k$ and therefore also $y_k'$ and $y_k''$ are invertible in $\cL_{\mu_s(\by,C)}$.
Thus by Lemma~\ref{lemma1}
there are $c_i,d_i \in \Z$ and $\nu',\nu'' \in K^\times$ with
$$
y_k'  = \nu'y_1^{c_1} \cdots y_{s-1}^{c_{s-1}}
(y_s^*)^{c_s}y_{s+1}^{c_{s+1}} \cdots y_p^{c_p}
\text{\;\;\; and \;\;\;}
y_k''  = \nu''y_1^{d_1} \cdots 
y_{s-1}^{d_{s-1}}(y_s^*)^{d_s}y_{s+1}^{d_{s+1}} \cdots y_p^{d_p}.
$$
Note that $c_s+d_s = 0$.
Without loss of generality we assume that $c_s \ge 0$.
(If $c_s < 0$, we continue to work with $y_k''$ instead of $y_k'$.)
If $c_s = 0$, we get
$$
y_k'  = \la'y_1^{a_1} \cdots y_s^{a_s} \cdots y_p^{a_p} =
\nu'y_1^{c_1} \cdots y_s^0 \cdots y_p^{c_p}.
$$
This is a contradiction, since $a_s \not= 0$ and $y_1,\ldots,y_m$ are
algebraically independent.
If $c_s > 0$, then 
\begin{align*}
y_k'  
&= 
\la'y_1^{a_1} \cdots y_{s-1}^{a_{s-1}} y_s^{a_s} y_{s+1}^{a_{s+1}}\cdots y_p^{a_p} \\
&=
\nu'y_1^{c_1} \cdots y_{s-1}^{c_{s-1}} (y_s^*)^{c_s} y_{s+1}^{c_{s+1}}\cdots y_p^{c_p} \\
&=
\nu'y_1^{c_1} \cdots y_{s-1}^{c_{s-1}}(M_1+M_2)^{c_s}
y_{s+1}^{c_{s+1}} \cdots y_p^{c_p}.
\end{align*}
We know that $M_1 \not= M_2$.
Thus the Laurent monomial $y_k'$ is a 
non-trivial linear combination of
$c_s+1 \ge 2$ pairwise different Laurent monomials in
$y_1,\ldots,y_m$,
a contradiction.
\end{proof}

Note that the coefficients $x_{p+1},\ldots,x_m$ of $\cA(\bx,B)$
are obviously irreducible in $\cL_\bx$.
Since $\cA(\bx,B) \subseteq \cL_\bx$,
they are also irreducible in $\cA(\bx,B)$.


\section{Factorial cluster algebras}
\label{Sec4}


\subsection{A factoriality criterion}
This section contains the proofs of Theorem~\ref{Main2}
and Corollary~\ref{Main3}.

\begin{Thm}\label{Thmfac}
Let $\by$ and $\bz$ be disjoint clusters of $\cA(\bx,B)$,
and let $U$ be a factorial subalgebra of $\cA(\bx,B)$
such that
$$
\{ y_1,\ldots,y_n,z_1,\ldots,z_n,x_{n+1}^{\pm 1},\ldots,
x_p^{\pm1},x_{p+1},\ldots,x_m \} \subset U.
$$
Then we have
$$
U = \cA(\bx,B) = U(\by,\bz).
$$
\end{Thm}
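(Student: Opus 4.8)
The plan is to establish the two inclusions $U \subseteq \cA(\bx,B) \subseteq U(\by,\bz)$ for free (the first by hypothesis, the second by the Laurent phenomenon Theorem~\ref{Laurent}, since $\cA(\bx,B) \subseteq \cL_\by \cap \cL_\bz = U(\by,\bz)$), and then to close the loop by showing $U(\by,\bz) \subseteq U$. So the whole content is: any element $f \in \cL_\by \cap \cL_\bz$ already lies in the factorial subalgebra $U$. Since $U$ contains $L = K[x_{n+1}^{\pm1},\ldots,x_p^{\pm1},x_{p+1},\ldots,x_m]$ as well as both clusters, it contains $\cL_\by$ up to inverting $y_1,\ldots,y_n$, and likewise $\cL_\bz$; the issue is exactly that $U$ need not contain those inverses.

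First I would write $f \in \cL_\by$ as $f = g/(y_1^{a_1}\cdots y_n^{a_n})$ with $g \in U$ (indeed $g$ lies in the subring of $U$ generated by $y_1,\ldots,y_m$ and the invertible coefficients) and with $a_i \ge 0$ minimal, i.e.\ $g$ divisible by no $y_i$ in $K[y_1,\ldots,y_m]$; by the algebraic independence of the cluster variables this normal form is unique. Symmetrically $f = h/(z_1^{b_1}\cdots z_n^{b_n})$ with $h \in U$, $b_j \ge 0$, $h$ divisible by no $z_j$. Now work inside the factorial ring $U$. By Theorem~\ref{Thmirr} each $y_i$ and each $z_j$ is irreducible in $\cA(\bx,B)$, hence irreducible in the subring $U$ as well (a factorisation in $U$ is a factorisation in $\cA(\bx,B)$, and by Theorem~\ref{Thminvert} the units of $U$ are among those of $\cA(\bx,B)$, namely Laurent monomials in $x_{n+1},\ldots,x_p$ times scalars, which involve none of the $y_i$ or $z_j$); in a factorial ring irreducible elements are prime. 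The two expressions for $f$ give the identity
$$
g \cdot z_1^{b_1}\cdots z_n^{b_n} = h \cdot y_1^{a_1}\cdots y_n^{a_n}
$$
in $U$. The key step is now a prime-factorisation bookkeeping argument: since $\by$ and $\bz$ are disjoint clusters, by Corollary~\ref{disass} they are non-associate, so no $y_i$ is associate to any $z_j$ in $\cA(\bx,B)$ (hence not in $U$), and moreover $g$ is divisible by no $y_i$ while $h$ is divisible by no $z_j$. Reading the displayed equation modulo the prime $y_i$ forces $a_i = 0$ for every $i$ (the left side is a unit times powers of the $z_j$, none associate to $y_i$, and $g$ is coprime to $y_i$), whence $f = g \in U$.

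The main obstacle I anticipate is the careful handling of the units and the coefficient ring $L$ so that the "coprime normal form'' really is well defined and the primality/associateness claims transfer correctly between $U$, $\cA(\bx,B)$, and the ambient field $\cF$ — in particular one must be sure that the invertible coefficients $x_{n+1}^{\pm1},\ldots,x_p^{\pm1}$ (which are units in $U$) do not spoil the uniqueness of the factorisation and that $g,h$ can be taken genuinely in $U$ rather than merely in $\cF$. Once that is set up cleanly, the divisibility argument modulo each prime $y_i$ is routine, and symmetry is not even needed for the final conclusion $U(\by,\bz) = U$, though it confirms $b_j = 0$ as well and hence $f = g = h$.
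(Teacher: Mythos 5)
Your overall strategy coincides with the paper's: the inclusions $U \subseteq \cA(\bx,B) \subseteq U(\by,\bz)$ are immediate (the second from Theorem~\ref{Laurent}), so everything reduces to $U(\by,\bz) \subseteq U$, which you attack by writing an element in its two Laurent forms, clearing denominators to obtain an identity in $U$, and then using factoriality of $U$ together with the irreducibility of the $y_i,z_j$ (Theorem~\ref{Thmirr}) and their pairwise non-associateness (Corollary~\ref{disass}). Up to and including the identity $g\,z_1^{b_1}\cdots z_n^{b_n} = h\,y_1^{a_1}\cdots y_n^{a_n}$ in $U$, and the transfer of irreducibility and of units from $\cA(\bx,B)$ to $U$, your argument is the paper's.

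The final step, however, has a genuine gap. You conclude that reducing modulo the prime $y_i$ forces $a_i=0$, on the grounds that ``$g$ is coprime to $y_i$'' because your normalized $g$ is not divisible by $y_i$ inside $K[y_1,\ldots,y_m]$. But non-divisibility in that polynomial subring does not imply non-divisibility in the larger ring $U$: for example the exchange binomial $y_ky_k^{*}$, written as a polynomial in $y_1,\ldots,y_m$, does not involve $y_k$ at all (the diagonal entries of an exchange matrix vanish), yet it is divisible by $y_k$ in $\cA(\bx,B)$, and in $U$ whenever $y_k^{*}\in U$. Moreover the conclusion $a_i=0$ is not merely unproven but false in general: it would assert that every element of $U(\by,\bz)$ is, up to a unit, a polynomial in $y_1,\ldots,y_m$, whereas $z_j\in\cA(\bx,B)\subseteq U(\by,\bz)$ and the $z_j$ typically have nontrivial denominators in the cluster $\by$. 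The repair is exactly what the paper does: do not aim for $a_i=0$, but only show that $y_1^{a_1}\cdots y_n^{a_n}$ divides $g$ in $U$ --- each prime $y_i$ divides the right-hand side $a_i$ times, and divides neither the $z_j$ (they are irreducible and non-associate to $y_i$) nor the unit coefficient monomials, so $y_i^{a_i}\mid g$, and the $y_i$ are pairwise non-associate. Then $f=g/(y_1^{a_1}\cdots y_n^{a_n})$ lies in $U$ because the quotient does; no minimality assumption or ``coprime normal form'' is needed anywhere.
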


\begin{proof}
Let $u \in U(\by,\bz) = \cL_\by \cap \cL_\bz$.
Thus we have
$$
u = \frac{f}{y_1^{a_1} y_2^{a_2} \cdots y_p^{a_p}} =
\frac{g}{z_1^{b_1} z_2^{b_2} \cdots z_p^{b_p}},
$$
where $f$ is a polynomial in $y_1,\ldots,y_m$, and
$g$ is a polynomial in $z_1,\ldots,z_m$, and
$a_i,b_i \ge 0$ for all $1 \le i \le p$.
By the Laurent phenomenon 
it is enough to show that $u \in U$.

Since $y_i,z_i \in U$ for all $1 \le i \le m$, we get
the identity
$$
f z_1^{b_1} z_2^{b_2} \cdots z_n^{b_n}z_{n+1}^{b_{n+1}} \cdots z_p^{b_p} =
g y_1^{a_1} y_2^{a_2} \cdots y_n^{a_n}y_{n+1}^{a_{n+1}} \cdots y_p^{a_p}
$$
in $U$.

By Theorem~\ref{Thmirr} the cluster variables $y_i$ and $z_i$ 
with $1 \le i \le n$ are
irreducible in $\cA(\bx,B)$.
In particular, they are irreducible in the subalgebra
$U$ of $\cA(\bx,B)$.
The elements $y_{n+1}^{a_{n+1}} \cdots y_p^{a_p}$ and $z_{n+1}^{b_{n+1}} \cdots z_p^{b_p}$ are units in $U$.
(Recall that $x_i = y_i = z_i$ for all $n+1 \le i \le m$.)

The clusters $\by$ and $\bz$ are disjoint.
Now Corollary~\ref{disass} implies that
the elements $y_i$ and $z_j$
are non-associate for all $1 \le i,j \le n$.
Thus,
by the factoriality of $U$, the monomial $y_1^{a_1} y_2^{a_2} \cdots y_n^{a_n}$ divides $f$ in $U$.
In other words there is some $h \in U$ with
$f = h y_1^{a_1} y_2^{a_2} \cdots y_n^{a_n}$.
It follows that 
$$
u =  \frac{f}{y_1^{a_1} y_2^{a_2} \cdots y_p^{a_p}} =
\frac{h y_1^{a_1} y_2^{a_2} \cdots y_n^{a_n}}{y_1^{a_1} y_2^{a_2} \cdots y_p^{a_p}} = 
\frac{h}{y_{n+1}^{a_{n+1}} \cdots y_p^{a_p}} =
h y_{n+1}^{-a_{n+1}} \cdots y_p^{-a_p}.
$$
Since $h \in U$ and $y_{n+1}^{\pm 1},\ldots,y_p^{\pm 1} \in U$,
we get $u \in U$.
This finishes the proof.
\end{proof}

\begin{Cor}
Assume that $\cA(\bx,B)$ is factorial.
\begin{itemize}
\setlength{\itemsep}{9pt}

\item[(i)]
If $\by$ and $\bz$ are
disjoint clusters of $\cA(\bx,B)$, then 
$\cA(\bx,B) = U(\by,\bz)$.

\item[(ii)]
For any $(\by,C) \sim (\bx,B)$ we have
$\cA(\bx,B) = U(\by,C)$.

\end{itemize}
\end{Cor}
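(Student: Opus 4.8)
The plan is to deduce this corollary directly from Theorem~\ref{Thmfac} by exhibiting, in each case, a factorial subalgebra $U$ containing the required generators. Since $\cA(\bx,B)$ is assumed factorial and it contains all cluster variables as well as $x_{n+1}^{\pm1},\ldots,x_p^{\pm1},x_{p+1},\ldots,x_m$ (these lie in the coefficient ring $L$ by construction), the natural candidate is simply $U = \cA(\bx,B)$ itself. The only hypothesis of Theorem~\ref{Thmfac} that needs attention is the existence of a pair of \emph{disjoint} clusters $\by$ and $\bz$, so the real content of the proof is producing such a pair.

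For part~(i) this is immediate: the disjoint clusters $\by$ and $\bz$ are given in the statement, so applying Theorem~\ref{Thmfac} with $U = \cA(\bx,B)$ yields $\cA(\bx,B) = U(\by,\bz)$ at once. For part~(ii) I would first reduce to the disjoint case. Given a seed $(\by,C)$, pick any index $1 \le k \le n$ and form the mutated cluster $\by' := \mu_{(\by,C)}(\by)$, which differs from $\by$ only in the $k$-th entry; then iterate, mutating successively at $1,2,\ldots,n$, to obtain after $n$ steps a cluster $\bz$ with $z_i \ne y_i$ for every $1 \le i \le n$. Here one uses that a single mutation at $k$ produces a new cluster variable $y_k^* \ne y_k$ (by Corollary~\ref{associate}(ii), since $y_k^*$ and $y_k$ are distinct cluster variables hence non-associate, in particular unequal), and that mutating at the remaining indices never reintroduces $y_k$ in the $k$-th slot. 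Thus $\by$ and $\bz$ are disjoint clusters, and Theorem~\ref{Thmfac} applied to $U = \cA(\bx,B)$ with this pair gives $\cA(\bx,B) = U(\by,\bz) = \cL_\by \cap \cL_\bz$. Finally, since $U(\by,C) = \cL_\by \cap \bigcap_{j=1}^n \cL_{\mu_j(\by,C)}$ is an intersection that contains $\cL_\by \cap \cL_\bz$ while being contained in $\cL_\by \supseteq \cA(\bx,B)$ and in $\overline{\cA}(\bx,B) \supseteq \cA(\bx,B)$, one sandwiches $\cA(\bx,B) \subseteq U(\by,C) \subseteq U(\by,\bz) = \cA(\bx,B)$, forcing equality.

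The step I expect to require the most care is constructing the disjoint cluster $\bz$ from an arbitrary cluster $\by$: one must check that the mutation sequence $\mu_n \cdots \mu_2 \mu_1$ genuinely changes every coordinate, i.e. that mutating at index $j$ changes the $j$-th cluster variable and that no subsequent mutation at index $j' \ne j$ restores the original $y_j$. The latter is clear because $\mu_{j'}$ only alters the $j'$-th entry of a cluster; the former follows from Corollary~\ref{associate}(ii) together with the fact that distinct cluster variables are distinct elements of $\cF$. One should also note the degenerate possibility $n=0$, but the standing assumption $n \ge 1$ rules this out, and for $n=1$ a single mutation already suffices. Everything else is a formal consequence of Theorem~\ref{Thmfac} and the Laurent phenomenon, so the corollary follows with no further computation.
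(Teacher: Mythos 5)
Your part~(i) is exactly the paper's argument: apply Theorem~\ref{Thmfac} with $U = \cA(\bx,B)$ itself, which is legitimate since all the required elements lie in $\cA(\bx,B)$. Part~(ii), however, has two genuine gaps. First, your construction of a cluster $\bz := \mu_n\cdots\mu_2\mu_1(\by,C)$ ``disjoint'' from $\by$ only shows $z_k \neq y_k$ for each $k$; disjointness means $\{y_1,\ldots,y_n\}\cap\{z_1,\ldots,z_n\}=\varnothing$, so you must also exclude $z_i = y_j$ for $i \neq j$, and your argument says nothing about that. This is not a formality: the paper proves that $\by$ and $\mu_n\cdots\mu_1(\by,C)$ are disjoint only under the additional hypothesis that cluster monomials are linearly independent (a conjecture of Fomin--Zelevinsky in general), and explicitly flags the general case as open. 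Factoriality of $\cA(\bx,B)$ does not obviously supply this. Second, and more seriously, your final sandwich requires $U(\by,C) \subseteq U(\by,\bz)$, i.e.\ $\cL_\by \cap \bigcap_{k}\cL_{\mu_k(\by,C)} \subseteq \cL_\bz$. There is no containment in either direction between $U(\by,C)$ and $\cL_\by\cap\cL_\bz$ in general: knowing $u$ is Laurent in $\by$ and in each one-step mutation does not make it Laurent in the cluster obtained after $n$ successive mutations. That implication is essentially the Berenstein--Fomin--Zelevinsky invariance of upper bounds, which the paper only quotes under the hypotheses that $B$ has maximal rank and $p=m$; you cannot use it here.

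The paper's proof of (ii) avoids both problems by working directly with the definition of $U(\by,C)$. For $u \in U(\by,C)$ and each $k$, one writes
$$
u = \frac{f}{y_1^{a_1}\cdots y_k^{a_k}\cdots y_p^{a_p}} = \frac{f_k}{y_1^{b_1}\cdots (y_k^*)^{b_k}\cdots y_p^{b_p}},
$$
clears denominators to get an identity in $\cA(\bx,B)$, and observes that $y_1,\ldots,y_n,y_1^*,\ldots,y_n^*$ are pairwise distinct cluster variables, hence pairwise non-associate (Corollary~\ref{associate}(ii)) and irreducible (Theorem~\ref{Thmirr}). Factoriality then forces $y_k^{a_k}$ to divide $f$; since this holds for every $k$, the full denominator $y_1^{a_1}\cdots y_n^{a_n}$ divides $f$ and $u \in \cA(\bx,B)$. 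If you want to salvage your approach, you would need to either prove disjointness of $\by$ and $\bz$ unconditionally and establish $U(\by,C)\subseteq\cL_\bz$, both of which are harder than the direct argument.
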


\begin{proof}
Part (i) follows directly from Theorem~\ref{Main2}.
To prove part (ii), assume $(\by,C) \sim (\bx,B)$ and
let $u \in U(\by,C)$. 
For $1 \le k \le n$ let
$(\by_k,C_k) := \mu_k(\by,C)$
and $y_k^* := \mu_{(\by,C)}(y_k)$.
We get
$$
u = \frac{f}{y_1^{a_1} \cdots y_k^{a_k} \cdots y_p^{a_p}}  
=
 \frac{f_k}{y_1^{b_1} \cdots (y_k^*)^{b_k}  \cdots y_p^{b_p}}  
$$
for a polynomial $f$ in $y_1,\ldots,y_k,\ldots,y_m$,
a polynomial $f_k$ in $y_1,\ldots,y_k^*,\ldots,y_m$,
and $a_i,b_i \ge 0$.
This yields an equality
\begin{equation}\label{eq2}
fy_1^{b_1} \cdots (y_k^*)^{b_k} \cdots y_p^{b_p} =
f_k y_1^{a_1} \cdots y_k^{a_k} \cdots y_p^{a_p}
\end{equation}
in $\cA(\bx,B)$.
Now we argue similarly as in the proof of Theorem~\ref{Thmfac}.
The cluster variables
$y_1,\ldots,y_n,y_1^*,\ldots,y_n^*$ are obviously
pairwise different.
Now Corollary~\ref{associate}(ii) implies that they are
pairwise non-associate, and by
Theorem~\ref{Thmirr} they are irreducible in $\cA(\bx,B)$.
Thus
by the factoriality of $\cA(\bx,B)$,
Equation~(\ref{eq2}) implies that 
$y_k^{a_k}$ divides $f$ in $\cA(\bx,B)$.
Since this holds for all $1 \le k \le n$, we get that
$y_1^{a_1} \cdots y_k^{a_k} \cdots y_n^{a_n}$ divides $f$ in $\cA(\bx,B)$.
It follows that $u \in \cA(\bx,B)$.
\end{proof}

\subsection{Existence of disjoint clusters}
One assumption of Theorem~\ref{Thmfac} is the existence
of disjoint clusters in $\cA(\bx,B)$.
We can prove this under a mild assumption.
But it should be true in general.

\begin{Prop}
Assume that the cluster monomials of $\cA(\bx,B)$ 
are linearly independent.
Let $(\by,C)$ be a seed of $\cA(\bx,B)$, and let
$$
(\bz,D) := \mu_n \cdots \mu_2\mu_1(\by,C).
$$
Then the clusters $\by$ and $\bz$ are disjoint.
\end{Prop}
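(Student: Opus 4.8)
The plan is to show that after mutating successively at the first $n$ vertices, none of the original cluster variables $y_1,\dots,y_n$ survives. Write $(\bz,D) := \mu_n\cdots\mu_1(\by,C)$ and, for $1\le j\le n$, let $(\by^{(j)},C^{(j)}) := \mu_j\cdots\mu_1(\by,C)$, so that $(\by^{(0)},C^{(0)}) = (\by,C)$ and $(\by^{(n)},C^{(n)}) = (\bz,D)$. I would track how the entries of the cluster change along this chain. At the step $\mu_j$, only the $j$-th entry is replaced, by $z_j := \mu_{(\by^{(j-1)},C^{(j-1)})}(y^{(j-1)}_j)$; all other entries are unchanged. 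Since at stage $j-1$ the first $j-1$ entries have already been replaced by $z_1,\dots,z_{j-1}$ and the remaining ones are still $y_j,\dots,y_n$ (together with the coefficients), the exchange relation at that step reads
$$
y_j\, z_j \;=\; \prod_{c^{(j-1)}_{ij}>0} w_i^{\,c^{(j-1)}_{ij}} \;+\; \prod_{c^{(j-1)}_{ij}<0} w_i^{\,-c^{(j-1)}_{ij}},
$$
where each $w_i$ is either some $z_\ell$ with $\ell<j$, or some $y_\ell$ with $\ell>j$, or a coefficient $x_\ell$ — crucially, $y_j$ itself does not appear on the right-hand side. Hence $z_j$ is a Laurent polynomial in the variables $\{z_1,\dots,z_{j-1},y_{j+1},\dots,y_n,x_{n+1},\dots,x_m\}$ times $y_j^{-1}$ times a sum of two distinct monomials; in particular $z_j \ne y_i$ for all $i$, and more is true: $z_j$ is genuinely not a cluster variable equal to any $y_i$.

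To convert "$z_j\ne y_i$ as elements of $\cF$" into the disjointness statement I would argue by contradiction: suppose $z_j = y_i$ for some $j$ and some $i\in\{1,\dots,n\}$. Multiply the exchange relation by $y_j$ to get $y_j y_i = M_1 + M_2$, an equality between a cluster monomial $y_j y_i$ (a monomial in the cluster $\by$) and a sum of two cluster monomials. Here is where the hypothesis that cluster monomials of $\cA(\bx,B)$ are linearly independent enters: the right-hand monomials $M_1,M_2$ are each a product of cluster variables appearing in the seed $(\by^{(j-1)},C^{(j-1)})$, hence cluster monomials of $\cA(\bx,B)$, and $M_1\ne M_2$ because $C^{(j-1)}$ is connected and $m>1$ (so the exchange relation is not of the degenerate form $1+1$, by the discussion in Section~\ref{introlaurent}). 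Thus $y_jy_i - M_1 - M_2 = 0$ is a non-trivial linear relation among three distinct cluster monomials — contradicting linear independence. Therefore $z_j\ne y_i$ for all $i,j$, i.e. $\{z_1,\dots,z_n\}\cap\{y_1,\dots,y_n\} = \varnothing$, which is exactly the disjointness of $\by$ and $\bz$.

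The main obstacle, and the point requiring care, is the claim that $M_1\ne M_2$ and that they really are cluster monomials: one must confirm that when mutating at $j$ the variable $y_j$ does not occur among the $w_i$'s (immediate from the sign conditions $c^{(j-1)}_{ij}>0$ or $<0$ exclude $i=j$), and that each $w_i$ is indeed one of the current cluster variables — which holds because $(\by^{(j-1)},C^{(j-1)})$ is a seed of $\cA(\bx,B)$ and its first $j-1$ entries are $z_1,\dots,z_{j-1}$. One also needs that $M_1,M_2$, and $y_iy_j$ are pairwise distinct as monomials, not merely $M_1\ne M_2$; but if, say, $y_iy_j = M_1$, then $M_2 = 0$, impossible. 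With these checks the linear-independence hypothesis applies directly and the proof is complete. (A secondary subtlety worth a sentence: when $i=j$ the statement $z_j\ne y_j$ is already clear since $z_j$ has a pole along $y_j=0$; the substantive content is $i\ne j$, which is handled uniformly by the argument above.)
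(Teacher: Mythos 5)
Your proof is correct, and it rests on the same key mechanism as the paper's: an assumed coincidence between a newly created and an old cluster variable turns an exchange relation into a nontrivial linear dependence among cluster monomials. You organize the argument differently, though. The paper proves by induction on $k$ the stronger claim $\{y_1[k],\ldots,y_k[k]\}\cap\{y_1,\ldots,y_n\}=\varnothing$; given $y_k[k]=y_j$ it first uses algebraic independence of the entries of the cluster $\by[k]$ to force $j\le k-1$, and then invokes the exchange relation from the \emph{earlier} step $j$, observing that both members of that exchange pair reappear inside the single cluster $\by[k]$, so that their product is a cluster monomial. You instead use the exchange relation at the step where the coincidence occurs, $y_jz_j=M_1+M_2$, and recognize the left-hand side $y_jy_i$ as a cluster monomial of the \emph{original} seed $(\by,C)$; this removes the need for both the induction and the algebraic-independence reduction. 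The supporting checks are the same in both versions and you supply them: $M_1\ne M_2$ (connectedness of the exchange matrix and $m>1$), $M_1$ and $M_2$ are genuine cluster monomials of the seed $\by^{(j-1)}$ (the vanishing diagonal entry excludes $y_j$ from the right-hand side), and the degenerate coincidences such as $y_iy_j=M_1$ still force a contradiction. Your variant is slightly more economical; the paper's inductive formulation records the marginally stronger fact that no partial mutation $\mu_k\cdots\mu_1$ reintroduces an old cluster variable.
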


\begin{proof}
Set $(\by[0],C[0]) := (\by,C)$, and for $1 \le k \le n$ let
$(\by[k],C[k]) := \mu_k(\by[k-1],C[k-1])$ and
$(y_1[k],\ldots,y_m[k]) := \by[k]$.
We claim that
$$
\{ y_1[k],\ldots,y_k[k] \} \cap \{ y_1,\ldots, y_n \} = \varnothing.
$$
For $k=1$ this is straightforward.
Thus let $k \ge 2$, and 
assume that our claim is true for $k-1$.
To get a contradiction, 
assume that $y_k[k] = y_j$ for some
$1 \le j \le n$.
(By the induction assumption we know that
$\{ y_1[k],\ldots,y_{k-1}[k] \} \cap \{ y_1,\ldots, y_n \} = \varnothing$, since $y_i[k] = y_i[k-1]$ for all 
$1 \le i \le k-1$.)

We have 
$\by[k] = (y_1[k],\ldots,y_k[k],y_{k+1},\ldots,y_m)$.
Since $y_1[k],\ldots,y_k[k],y_{k+1},\ldots,y_m$ are
algebraically independent and $y_k[k] \not= y_k$, we get 
$1 \le j \le k-1$.
Since $(\by[j],C[j]) = \mu_j(\by[j-1],C[j-1])$,
it follows that $(y_j[j-1],y_j[j])$ is an exchange pair
of $\cA(\bx,B)$.
Next, observe that
$y_k[k] = y_j = y_j[j-1]$ and 
$y_j[k] = y_j[j]$.
Thus $y_j[j-1]$ and  $y_j[j]$ are both
contained in $\{ y_1[k],\ldots,y_m[k] \}$, and therefore
$y_j[j-1]y_j[j]$ is a cluster monomial. 
The
corresponding exchange relation gives a contradiction
to the linear independence of cluster monomials.
\end{proof}

Fomin and Zelevinsky
\cite[Conjecture~4.16]{FZ3} conjecture that the cluster monomials of $\cA(\bx,B)$
are always linearly independent.
Under the assumptions that
$B$ has maximal rank and that
$B^\circ$ is skew-symmetric, the conjecture follows from \cite[Theorem~1.7]{DWZ}.


\section{The divisibility group of a cluster algebra}\label{Sec5}


Let $R$ be an integral domain, and let $\Quot(R)$
be the field of fractions of $R$.
Set $\Quot(R)^* := \Quot(R) \setminus \{0\}$.
The abelian group 
$$
G(R) := (\Quot(R)^*/R^\times,\cdot)
$$ 
is the \emph{divisibility group} of $R$.

For $g,h \in \Quot(R)^*$ let
$g \le h$ provided $hg^{-1} \in R$.
This relation is reflexive and transitive and it induces
a partial ordering on $G(R)$.

Let $I$ be a set.
The abelian group $(\Z^{(I)},+)$ is equipped with the following partial ordering:
We set $(x_i)_{i \in I} \le (y_i)_{i \in I}$ if $x_i \le y_i$
for all $i$.
(By definition, the elements in $\Z^{(I)}$ are
tuples $(x_i)_{i \in I}$ of integers $x_i$ such that
only finitely many $x_i$ are non-zero.)

There is the following well-known criterion for
the factoriality of $R$, see for example \cite[Section~2]{C}.

\begin{Prop}
For an integral domain $R$ the following are equivalent:
\begin{itemize}
\setlength{\itemsep}{9pt}

\item[(i)]
$R$ is factorial.

\item[(ii)]
There is a set $I$ and
a group isomorphism 
$$
\phi\df G(R) \to \Z^{(I)}
$$
such that
for all $g,h \in G(R)$ we have
$g \le h$ if and only if $\phi(g) \le \phi(h)$.

\end{itemize}
\end{Prop}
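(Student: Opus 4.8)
The plan is to prove the equivalence of factoriality with the existence of an order-preserving isomorphism $\phi\colon G(R)\to\Z^{(I)}$ by exhibiting, in each direction, the obvious candidate and checking that it does what it should. For the implication (i)$\Rightarrow$(ii), I would take $I$ to be a set of representatives of the associate-classes of irreducible elements of $R$; that is, I pick one irreducible $\pi_i$ for each class. Given any $g\in\Quot(R)^*$, write $g=r/s$ with $r,s\in R\setminus\{0\}$, factor $r$ and $s$ into irreducibles using condition (i) of factoriality, and read off, for each $i\in I$, the exponent $v_i(g)\in\Z$ with which $\pi_i$ (up to units) appears. The key point is that the uniqueness clause (ii) in the definition of factorial makes each $v_i$ a well-defined function on $\Quot(R)^*$ that vanishes on $R^\times$ and is additive, so it descends to a homomorphism $G(R)\to\Z^{(I)}$, $g R^\times\mapsto (v_i(g))_{i\in I}$. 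Only finitely many $v_i(g)$ are nonzero because $r$ and $s$ each have finitely many irreducible factors, so the image lands in $\Z^{(I)}$. Injectivity: if all $v_i(g)=0$ then the numerator and denominator of $g$ are associate, so $g\in R^\times$. Surjectivity is clear since any element of $\Z^{(I)}$ is $(v_i(\prod_j \pi_j^{e_j}))_i$ for a suitable ratio of monomials in the $\pi_j$. Finally I would check the order condition: $g\le h$ means $hg^{-1}\in R$, and an element of $\Quot(R)^*$ lies in $R$ exactly when all its exponents $v_i$ are $\ge 0$ — the nontrivial direction here again uses unique factorization to see that a fraction with all nonnegative exponents can be rewritten with trivial denominator. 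Hence $g\le h\iff v_i(h)-v_i(g)\ge 0$ for all $i\iff\phi(g)\le\phi(h)$.

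For the converse (ii)$\Rightarrow$(i), suppose such a $\phi$ exists. The standard basis vectors $e_i\in\Z^{(I)}$ pull back under $\phi^{-1}$ to elements of $G(R)$; I claim each $e_i$ is the class of an irreducible element of $R$, and that these (up to associates) are all the irreducibles, and that every element of $R\setminus\{0\}$ is a product of them with an appropriate class. The order-compatibility is what turns the abstract lattice statement into ring-theoretic factorization. Concretely: first, an element $r\in R\setminus\{0\}$ is a unit iff $\phi(rR^\times)=0$, because $r\in R^\times\iff r\le 1$ and $1\le r$, which via $\phi$ says $\phi(r)\le 0$ and $0\le\phi(r)$. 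Next, a nonzero nonunit $r$ has $\phi(r)=\sum_{i}n_i e_i$ with finitely many $n_i\ne 0$; since $r\in R$ we have $0\le\phi(r)$, i.e. all $n_i\ge 0$. Picking, for each $i$ appearing, an element $p_i\in R$ with $\phi(p_i R^\times)=e_i$ (possible since $e_i\ge 0$ forces $p_i\in R$, and $\phi$ is onto), the element $r':=r/\prod_i p_i^{n_i}$ has $\phi(r'R^\times)=0$, hence $r'$ is a unit, so $r=r'\prod_i p_i^{n_i}$. This gives existence of factorizations once one knows each $p_i$ is irreducible: if $p_i=ab$ then $\phi(a)+\phi(b)=e_i$ in $\Z^{(I)}$ with $\phi(a),\phi(b)\ge 0$, forcing one of them to be $0$, i.e. a unit — so $p_i$ is irreducible. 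Uniqueness of factorizations then follows because the multiset of exponents $(n_i)$ is exactly $\phi(r)$, an invariant of $r$, and two irreducibles are associate iff they have the same $\phi$-value (here I should check any irreducible $q$ has $\phi(q)=e_i$ for some $i$: $\phi(q)\ge 0$ and $\phi(q)=\sum n_i e_i$ would exhibit a proper factorization of $q$ unless exactly one $n_i$ equals $1$ and the rest vanish).

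I expect the main obstacle to be not any single deep step but the careful bookkeeping around the order relation: making sure that "all exponents $v_i(g)\ge 0$" is genuinely equivalent to "$g\in R$", rather than merely implied in one direction. In the forward direction this is precisely where factoriality is used in its full strength — a priori a fraction $r/s$ could have all nonnegative exponents yet not obviously simplify to an element of $R$, and one must invoke the uniqueness of irreducible factorization of $r$ and of $s$ to cancel $s$ entirely. In the backward direction the subtlety is dual: one must extract from the bare order-isomorphism the fact that $e_i\ge 0$ implies the representative $p_i$ can be chosen in $R$ (not merely in $\Quot(R)^*$), which is immediate from $\phi^{-1}(e_i)\ge\phi^{-1}(0)=1R^\times$ meaning $p_i\cdot 1^{-1}\in R$. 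Once these order translations are pinned down, the rest is formal. I would organize the write-up as two lemmas — one computing $R^\times=\phi^{-1}(0)$ and $R=\phi^{-1}(\Z^{(I)}_{\ge 0})$ under hypothesis (ii), and a parallel observation under (i) — and then assemble both implications from them.
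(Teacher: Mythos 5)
The paper gives no proof of this proposition at all: it is stated as a well-known criterion with a citation to Cohn's article on unique factorization domains, so there is nothing internal to compare your argument against. Your proof is the standard one that such a citation points to, and it is correct and essentially complete in both directions: the map $g\mapsto(v_i(g))_i$ built from exponents of irreducibles for (i)$\Rightarrow$(ii), and the recovery of irreducibles as the preimages of the standard basis vectors $e_i$ for (ii)$\Rightarrow$(i). You also correctly isolate the two places where the order-compatibility does real work, namely that ``all $v_i\ge 0$'' is equivalent to membership in $R$ (needing uniqueness of factorization in the forward direction), and that $\phi^{-1}(e_i)$ has a representative in $R$ and every irreducible $q$ satisfies $\phi(q)=e_i$ for some $i$ (in the backward direction). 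The only point worth writing out more explicitly in a final version is the last step of uniqueness: given two factorizations $a_1\cdots a_s=b_1\cdots b_t$, comparing the equal multisets of $\phi$-values $\phi(a_j),\phi(b_k)\in\{e_i\}$ yields $s=t$ and the required bijection $\pi$, using that irreducibles with equal $\phi$-value are associate because their ratio lies in the kernel of the isomorphism $\phi$.
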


Not all cluster algebras $\cA(\bx,B)$ are factorial, but at least
one part of the above factoriality criterion is satisfied:

\begin{Prop}
For any seed $(\bx,B)$ of $\cF$
the divisibility group 
$G(\cA(\bx,B))$
is isomorphic to $\Z^{(I)}$, where
$$
I := \left\{ f \in K[x_1,\ldots,x_m] \mid f \text{ is irreducible and } f \not= x_i \text{ for } n+1 \le i \le p \right\}/K^\times
$$
is the set of irreducible polynomials unequal to any $x_{n+1},\ldots,x_p$ in
$K[x_1,\ldots,x_m]$ up to non-zero scalar multiples.
\end{Prop}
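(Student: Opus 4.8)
The plan is to compute the divisibility group $G(\cA(\bx,B)) = \Quot(\cA(\bx,B))^*/\cA(\bx,B)^\times$ by exploiting the sandwich
$$
\cL_{\bx,\Z}\text{-analog} \quad\subseteq\quad \cA(\bx,B) \quad\subseteq\quad \cL_\bx,
$$
which is available from the Laurent phenomenon (Theorem~\ref{Laurent}). The key observation is that $\cF = K(x_1,\ldots,x_m) = \Quot(K[x_1,\ldots,x_m]) = \Quot(\cL_\bx) = \Quot(\cA(\bx,B))$, so all three rings have the same field of fractions, and hence the same group $\cF^*$ sitting on top. Since $K[x_1,\ldots,x_m]$ is a polynomial ring, it is factorial, so by the criterion in the preceding Proposition its divisibility group $G(K[x_1,\ldots,x_m])$ is $\Z^{(J)}$ with $J$ indexed by the set of irreducible polynomials in $K[x_1,\ldots,x_m]$ up to scalars. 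Concretely, every $g \in \cF^*$ has a unique factorization $g = \la\prod_{f\in J} f^{v_f(g)}$ with $\la\in K^\times$ and $v_f(g)\in\Z$ almost all zero, and $v_f$ is a group homomorphism $\cF^*\to\Z$.

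**First I would** identify $\cA(\bx,B)^\times$ explicitly via Theorem~\ref{Thminvert}: it equals $\{\la x_{n+1}^{a_{n+1}}\cdots x_p^{a_p}\mid \la\in K^\times,\ a_i\in\Z\}$. In the language of the valuations $v_f$, this unit group is exactly the set of $g\in\cF^*$ with $v_f(g)=0$ for every irreducible $f$ that is \emph{not} associate to one of $x_{n+1},\ldots,x_p$, together with the observation that $x_{n+1},\ldots,x_p$ are (non-associate, distinct) irreducibles of $K[x_1,\ldots,x_m]$. Therefore the quotient map $\cF^*\to G(K[x_1,\ldots,x_m])=\Z^{(J)}\to\Z^{(J\setminus\{x_{n+1},\ldots,x_p\})}=\Z^{(I)}$ (projection killing the $p-n$ coordinates indexed by $x_{n+1},\ldots,x_p$) has kernel precisely $K^\times\cdot\langle x_{n+1},\ldots,x_p\rangle = \cA(\bx,B)^\times$. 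This gives a group isomorphism $\cF^*/\cA(\bx,B)^\times \xrightarrow{\ \sim\ } \Z^{(I)}$, i.e. $G(\cA(\bx,B))\cong\Z^{(I)}$.

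**The one subtle point** — and the step I expect to require the most care — is checking that the unit group computed inside $\cA(\bx,B)$ is really cut out by vanishing of the valuations, i.e. that the containment $\cA(\bx,B)^\times\subseteq\{g: v_f(g)=0\text{ for }f\not\sim x_{n+1},\dots,x_p\}$ is an \emph{equality}. The inclusion $\subseteq$ is immediate from Theorem~\ref{Thminvert}. For $\supseteq$ one uses that if $v_f(g)=0$ for all such $f$, then $g=\la x_{n+1}^{a_{n+1}}\cdots x_p^{a_p}$ with $\la\in K^\times$ (this is just unique factorization in the polynomial ring, restricted to $g\in\cF^*$), and such an element is invertible in $\cA(\bx,B)$ because $x_{n+1},\ldots,x_p$ are invertible there by construction of $L$. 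One should also spell out that $x_{n+1},\ldots,x_p$ are genuinely irreducible and pairwise non-associate in $K[x_1,\ldots,x_m]$ so that deleting their coordinates from $\Z^{(J)}$ is well-defined and yields $\Z^{(I)}$ with $I$ as stated. I would not bother verifying the order-compatibility clause of the factoriality criterion here, since the Proposition only claims a group isomorphism $G(\cA(\bx,B))\cong\Z^{(I)}$, not that this isomorphism is order-preserving (indeed it cannot be in general, as $\cA(\bx,B)$ need not be factorial).
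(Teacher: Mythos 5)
Your proposal is correct and follows essentially the same route as the paper: identify $\Quot(\cA(\bx,B))=K(x_1,\ldots,x_m)$ via the Laurent phenomenon, identify the unit group via Theorem~\ref{Thminvert}, and then use unique factorization in the polynomial ring $K[x_1,\ldots,x_m]$ to compute $\cF^*$ modulo $\cA(\bx,B)^\times$. The paper compresses this last step into one sentence (``working modulo $\cA(\bx,B)^\times$ yields the result''), whereas you spell it out with the valuations $v_f$ and the projection killing the coordinates of $x_{n+1},\ldots,x_p$; your added care about these being pairwise non-associate irreducibles, and your remark that no order-compatibility is claimed, are both accurate.
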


\begin{proof}
By the Laurent phenomenon and the definition
of a seed we get
$$
\Quot(\cA(\bx,B)) = \Quot(\cL_\bx) = 
K(x_1,\ldots,x_m).
$$
Furthermore, by Theorem~\ref{Thminvert} we have
$$
\cA(\bx,B)^\times = \{\la x_{n+1}^{a_{n+1}} \cdots x_p^{a_p} \mid \la \in K^\times, a_i \in \Z \}.
$$
Any element
in $K(x_1,\ldots,x_m)$ is of the form $f_1 \cdots f_s
g_1^{-1} \cdots g_t^{-1}$ with
$f_i,g_j$ irreducible in $K[x_1,\ldots,x_m]$.
Using that the polynomial ring
$K[x_1,\ldots,x_m]$ is factorial, and
working modulo $\cA(\bx,B)^\times$ yields the result.
\end{proof}


\section{Examples of non-factorial cluster algebras}\label{Sec6.1}


\subsection{}\label{nonfac3}
For a matrix $A \in M_{m,n}(\Z)$ and $1 \le i \le n$ let
$c_i(A)$ be the $i$th column of $A$.

\begin{Prop}\label{nonfac1}
Let $(\bx,B)$ be a seed of $\cF$.
Assume
that $c_k(B) = c_s(B)$ or $c_k(B) = -c_s(B)$ for some
$k \not= s$ with $b_{ks} = 0$. 
Then $\cA(\bx,B)$ is
not factorial.
\end{Prop}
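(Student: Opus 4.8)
The statement to prove is Proposition~\ref{nonfac1}: if some pair of columns $c_k(B), c_s(B)$ of the exchange matrix coincide up to sign, with $k \neq s$ and $b_{ks} = 0$, then $\cA(\bx,B)$ is not factorial. The strategy is to exhibit an element of $\cA(\bx,B)$ admitting two genuinely different factorizations into irreducibles. The natural candidates are the two exchange monomials appearing on the right-hand side of the exchange relation at $k$. Writing $x_k^* := \mu_{(\bx,B)}(x_k)$, the exchange relation \eqref{mutationrelation} reads
$$
x_k x_k^* = \prod_{b_{ik}>0} x_i^{b_{ik}} + \prod_{b_{ik}<0} x_i^{-b_{ik}}.
$$
Because $c_k(B) = \pm c_s(B)$ and $b_{ks} = 0$ (so that $s$ does not occur among the $i$ with $b_{ik}\neq 0$, and symmetrically $k$ does not occur in column $s$), the monomials occurring in the exchange relation at $k$ and at $s$ are essentially the same: if $c_k(B) = c_s(B)$ then $x_s x_s^*$ equals the very same sum $x_k x_k^*$, and if $c_k(B) = -c_s(B)$ then $x_s x_s^*$ equals the same sum with the two monomials swapped. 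In either case I obtain the single identity
$$
x_k x_k^* = x_s x_s^*
$$
inside $\cA(\bx,B)$, where $x_k, x_k^*, x_s, x_s^*$ are four cluster variables.

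\textbf{Key steps.} First I would verify carefully, using $b_{ks}=0$, that mutating at $k$ does not change column $s$ in a way that matters and vice versa, so that $x_k$ and $x_s$ genuinely lie in a common cluster (they do: $(\bx,B)$ itself contains both), and that $x_k^*$ and $x_s^*$ are well-defined cluster variables obtained by the single mutations $\mu_k$ and $\mu_s$. Then I would write down the exchange relations at $k$ and at $s$ and use the column hypothesis to identify their right-hand sides, obtaining $x_k x_k^* = x_s x_s^*$. Next, by Theorem~\ref{Thmirr} (Theorem~\ref{Main1}(ii)) each of $x_k, x_k^*, x_s, x_s^*$ is irreducible in $\cA(\bx,B)$. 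Finally I would invoke Corollary~\ref{associate}(ii): distinct cluster variables are non-associate, so it remains only to check that $x_k, x_k^*, x_s, x_s^*$ are pairwise distinct. Here $k \neq s$ gives $x_k \neq x_s$; that $x_k \neq x_k^*$ and $x_s \neq x_s^*$ is part of the basic theory (an exchange pair consists of two different variables, which uses $m>1$ and connectedness, exactly as in the proof of Theorem~\ref{Thminvert}); and $x_k \neq x_s^*$, $x_s \neq x_k^*$ can be checked by a degree/Laurent-monomial comparison or by noting $x_s^*, x_k^*$ are non-Laurent-monomial while $x_k, x_s$ are among the $X_i$-type free variables of an appropriate cluster. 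With four pairwise-distinct irreducibles satisfying $x_k x_k^* = x_s x_s^*$, uniqueness of factorization fails, so $\cA(\bx,B)$ is not factorial.

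\textbf{Main obstacle.} The only delicate point is establishing that the four cluster variables are pairwise distinct — in particular ruling out $x_k = x_s^*$ and the sign-case subtleties when $c_k(B) = -c_s(B)$, where one must be careful that swapping the two monomials in the exchange relation really does produce $x_s x_s^*$ and not some other combination. I expect this to be handled by a short argument comparing the numerator polynomials (in the cluster $\bx$) of the variables involved: $x_k$ and $x_s$ are variables of the cluster $\bx$ itself, hence their $\cL_\bx$-expressions are monomials, whereas $x_k^*$ and $x_s^*$ are honest binomials (two distinct monomials, by connectedness), so no coincidence across the two groups is possible, and within the group $\{x_k,x_s\}$ distinctness is just $k\neq s$ together with the algebraic independence of $x_1,\dots,x_m$. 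Everything else is a direct assembly of Theorem~\ref{Thmirr} and Corollary~\ref{associate}.
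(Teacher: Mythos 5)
Your proposal is correct and follows essentially the same route as the paper: both exhibit the identity $x_k x_k^* = x_s x_s^*$ among four pairwise distinct cluster variables, then invoke Theorem~\ref{Thmirr} for irreducibility and Corollary~\ref{associate}(ii) for non-associateness to contradict unique factorization. The only cosmetic difference is that the paper obtains the two new variables via the composite mutation $\mu_s\mu_k(\bx,B)$ (so that they lie in a common cluster), whereas you perform the two single mutations $\mu_k$ and $\mu_s$ separately from $(\bx,B)$; the resulting elements of $\cF$ are the same, and your care about pairwise distinctness (via the Laurent-monomial comparison) fills in the one step the paper leaves as an assertion.
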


\begin{proof}
Define  $(\by,C) := \mu_k(\bx,B)$ and
$(\bz,D) := \mu_s(\by,C)$.
We get
$$
y_k = z_k = x_k^{-1}(M_1+M_2),
$$
where
$$
M_1 := \prod_{b_{ik}>0} x_i^{b_{ik}}
\text{\;\;\; and \;\;\;}
M_2 := \prod_{b_{ik}<0}x_i^{-b_{ik}}.
$$
By the mutation rule, we have $c_k(C) = -c_k(B)$, and
since $b_{ks}=0$, we get $c_s(C) = c_s(B)$.
Since $c_s(B) = c_k(B)$ or $c_s(B) = -c_k(B)$, this implies 
that 
$$
z_s = x_{s}^{-1}(M_1+M_2).
$$
The cluster variables $x_k,x_s,z_k,z_s$ are pairwise
different.
Thus they are pairwise non-associate by
Corollary~\ref{associate}(ii), 
and by Theorem~\ref{Thmirr} they are irreducible
in $\cA(\bx,B)$.
Obviously, we have
$$
x_kz_k = x_sz_s.
$$
Thus $\cA(\bx,B)$ is not factorial.
\end{proof}

To give a concrete example of a cluster algebra, which is
not factorial,
assume $m=n=p=3$, and let
$B \in M_{m,n}(\Z)$ be the matrix
$$
B = 
\left(\bbm
0&-1&0\\
1&0&-1\\
0&1&0
\ebm
\right).
$$
The matrix $B$ obviously satisfies the assumptions of
Proposition~\ref{nonfac1}.
Note that $B = B^\circ$ is skew-symmetric, and that
$\Gamma(B)$ is the quiver
$$
\xymatrix{
3 \ar[r] & 2 \ar[r] &1.
}
$$
Thus $\cA(\bx,B)$ is a cluster algebra of Dynkin type $\A_3$.
(Cluster algebras with finitely many cluster variables
are classified via Dynkin types, for details see \cite{FZ2}.)

Define 
$(\bz,D) := \mu_3\mu_1(\bx,B)$.
We get
$z_1 = x_1^{-1}(1+x_2)$,
$z_3 = x_3^{-1}(1+x_2)$ and therefore
$x_1z_1 = x_3z_3$.

Clearly, the cluster variables $x_1,x_3,z_1,z_3$ are pairwise different.
Using Corollary~\ref{associate}(ii) we get that
$x_1,x_3,z_1,z_3$ are pairwise non-associate, and by Theorem~\ref{Thmirr} they are irreducible.
Thus
$\cA(\bx,B)$ is not factorial.

\subsection{}\label{nonfac2}
The next example is due to Philipp Lampe.
It gives a negative answer to Zelevinsky's Question~\ref{question1}.

\begin{Prop}[{{\cite{La}}}]\label{lampe}
Let $K=\C$, $m=n=2$ and 
$$
B = \left(\bbm
0 & -2\\
2 & 0
\ebm\right).
$$
Then $\cA(\bx,B)$ is not factorial.
\end{Prop}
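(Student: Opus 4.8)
The exchange matrix $B=\left(\bsm 0&-2\\2&0\esm\right)$ is skew-symmetric of rank $2$, so the Laurent phenomenon and the earlier results all apply; in particular $\cA(\bx,B)^\times=\C^\times$ by Theorem~\ref{Thminvert} (here $n=p=m=2$, so there are no coefficients), and every cluster variable is irreducible by Theorem~\ref{Thmirr}. The strategy is therefore to exhibit an explicit equality between two genuinely different factorizations into irreducibles, exactly as in Proposition~\ref{nonfac1}, but now one has to work a little because the columns of $B$ are $\pm$ each other yet $b_{12}=-2\neq 0$, so Proposition~\ref{nonfac1} does not apply directly. Instead one computes the exchange sequence explicitly. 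Starting from $\bx=(x_1,x_2)$, mutation at $1$ gives $x_1'=(1+x_2^2)/x_1$ and mutation at $2$ gives $x_2'=(1+x_1^2)/x_2$; continuing the mutations (this is the rank-$2$ cluster algebra of type $A_1^{(1)}$, which has infinitely many cluster variables) produces a chain of cluster variables $x_1,x_2,x_1',x_2',\ldots$ related by exchange relations $x_k x_k'=1+(\text{other variable})^2$.

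\textbf{Key steps.} First I would write down the exchange relations coming out of this seed. The two mutations from $(\bx,B)$ read
\begin{equation}\label{eq:lampe1}
x_1 x_1' = 1 + x_2^2,\qquad x_2 x_2' = 1 + x_1^2 ,
\end{equation}
where $x_1':=\mu_{(\bx,B)}(x_1)$ and $x_2':=\mu_{(\bx,B)}(x_2)$. These already furnish two factorizations of comparable elements once one factors the right-hand sides over $\C$: $1+x_2^2=(1+i x_2)(1-i x_2)$ and $1+x_1^2=(1+i x_1)(1-i x_1)$. The plan is to show that $x_1+i$ (equivalently, one of these linear factors after a further mutation) is itself a cluster variable, so that one obtains an honest coincidence of two factorizations into cluster variables. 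Concretely, mutating once more — say computing $\mu_1\mu_2(\bx,B)$ or the appropriate composite — one finds a cluster variable equal to $(1+i x_1)$ or to $x_1^{-1}$ times such a factor, giving a relation of the shape $a\,b = c\,d$ with $a,b,c,d$ pairwise distinct cluster variables (hence irreducible by Theorem~\ref{Thmirr}, non-invertible by Theorem~\ref{Thminvert}, and pairwise non-associate by Corollary~\ref{associate}(ii)). That contradicts condition (ii) in the definition of a factorial ring, so $\cA(\bx,B)$ is not factorial.

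\textbf{Main obstacle.} The delicate point is \emph{producing the right relation}: one must check that the linear factors $1\pm i x_j$ of the binomials in \eqref{eq:lampe1} actually occur (up to a unit, i.e. up to a nonzero scalar, and up to multiplication by earlier cluster variables) among the cluster variables of this particular cluster algebra, and that the resulting two factorizations are into pairwise \emph{distinct} cluster variables — distinctness is what Corollary~\ref{associate}(ii) converts into non-associateness, and without it there is no contradiction. Since the type $A_1^{(1)}$ cluster variables are given by an explicit recursion (Chebyshev-like), I would carry out enough of that recursion to locate an identity such as $x_1\cdot z = (1+ix_1)(1-ix_1)/x_2 \cdot x_2 = \cdots$, rearranged so that both sides are products of cluster variables, and then simply verify the four variables involved are distinct by comparing their Laurent expansions in a fixed cluster (they have different denominators or different leading terms, so algebraic independence of $x_1,x_2$ rules out coincidences). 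Everything else — irreducibility, the unit group, non-associateness — is supplied verbatim by Theorems~\ref{Thminvert} and \ref{Thmirr} and Corollary~\ref{associate}.
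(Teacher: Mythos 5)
Your starting point is the right one --- the exchange relation $x_1x_1' = 1+x_2^2 = (x_2+i)(x_2-i)$ over $\C$ is exactly what the paper uses (Proposition~\ref{lampe} is the case $d=2$, $k=1$ of Proposition~\ref{nonfac4}) --- but the step you defer as the ``main obstacle'' is not merely delicate, it is false. The linear factors $x_2\pm i$ are \emph{not} cluster variables of $\cA(\bx,B)$, nor are they cluster variables up to units and products with earlier ones: by the second part of Theorem~\ref{Laurent} every cluster variable lies in $\Z[x_1^{\pm 1},x_2^{\pm 1}]$, so its Laurent expansion has integer coefficients, whereas $x_2\pm i$ does not, and a unit is just a scalar in $\C^\times$ here (Theorem~\ref{Thminvert} with $p=n$), which cannot make both coefficients of $x_2\pm i$ real simultaneously. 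So no amount of further mutation in this rank-two algebra will produce the relation $ab=cd$ among four pairwise distinct cluster variables that your plan requires; the hoped-for analogue of Proposition~\ref{nonfac1} does not exist in this example.

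The repair is that you do not need the factors to be cluster variables at all. The elements $x_2\pm i$ lie in $\cA(\bx,B)$ (it is a $\C$-algebra containing $x_2$) and are non-invertible by Theorem~\ref{Thminvert}. Now \emph{assume} $\cA(\bx,B)$ is factorial and write each of $x_2+i$ and $x_2-i$ as a product of irreducibles. Since the left-hand side $x_1x_1'$ is already a product of exactly two irreducibles (Theorem~\ref{Thmirr}), uniqueness of factorization forces each of $x_2\pm i$ to be irreducible and forces $x_1$ to be associate to one of them, i.e.\ $x_1=\la(x_2\pm i)$ for some $\la\in\C^\times$. This contradicts the algebraic independence of $x_1,x_2$. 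That is the paper's argument (the proof of Proposition~\ref{nonfac4} specialised to $d=2$), and it replaces your missing step entirely; the rest of your setup (irreducibility and non-invertibility of cluster variables, triviality of the unit group) is used in the same way.
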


The proof of the following result is
a straightforward generalization of 
Lampe's proof of Proposition~\ref{lampe}.

\begin{Prop}\label{nonfac4}
Let $(\bx,B)$ be a seed of $\cF$.
Assume that there exists some $1 \le k \le n$ such that
the polynomial  $X^d+Y^d$ is not irreducible in $K[X,Y]$,
where $d := \gcd(b_{1k},\ldots,b_{mk})$ is the greatest common divisor of $b_{1k},\ldots,b_{mk}$.
Then $\cA(\bx,B)$ is not factorial.
\end{Prop}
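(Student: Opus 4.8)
The plan is to mimic the structure of the proof of Proposition~\ref{nonfac1}: produce an explicit equation in $\cA(\bx,B)$ expressing a single element as two essentially different products of irreducible elements. The starting point is the exchange relation at $k$. Write $d := \gcd(b_{1k},\dots,b_{mk})$ and set $b_{ik} =: d\,\beta_i$, so that
$$
\prod_{b_{ik}>0} x_i^{b_{ik}} = P^d, \qquad \prod_{b_{ik}<0} x_i^{-b_{ik}} = Q^d,
$$
where $P := \prod_{\beta_i>0} x_i^{\beta_i}$ and $Q := \prod_{\beta_i<0} x_i^{-\beta_i}$ are (Laurent) monomials in the $x_i$. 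The exchange relation \eqref{mutationrelation} then reads $x_k x_k^* = P^d + Q^d$, where $x_k^* := \mu_{(\bx,B)}(x_k)$. By hypothesis $X^d+Y^d = \prod_{j=1}^r g_j(X,Y)$ with $r\ge 2$ nonconstant factors $g_j\in K[X,Y]$ (one may take them irreducible, or just take $r=2$). Substituting $X=P$, $Y=Q$ gives
$$
x_k x_k^* \;=\; \prod_{j=1}^r g_j(P,Q)
$$
as an identity in $\cA(\bx,B)\subseteq \cL_\bx \subseteq \cF$, where each $g_j(P,Q)$ lies in $\cL_\bx$.

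First I would argue that, after clearing the denominators coming from $P$ and $Q$ (which are monomials in $x_1,\dots,x_p$, hence units or coefficients), one can arrange each factor $g_j(P,Q)$ to be an honest element of $\cA(\bx,B)$; indeed $g_j(P,Q)$ is a Laurent polynomial in $x_1,\dots,x_p$ and a polynomial in $x_{p+1},\dots,x_m$, so it lies in $\cL_\bx$, and since $x_k x_k^* \in \cA(\bx,B)$ and the total denominator is a unit times a product of coefficients, each $g_j(P,Q)$ (up to multiplying by suitable powers of $x_{n+1},\dots,x_p$ and of the coefficients) belongs to $\cA(\bx,B)$. The crucial point is then that $x_k$ and $x_k^*$ are irreducible in $\cA(\bx,B)$ by Theorem~\ref{Thmirr}, and they are non-associate by Corollary~\ref{associate}(ii) since $x_k \ne x_k^*$; moreover $x_k$ is not a unit and not a coefficient, so in any factorization of $x_k x_k^*$ into irreducibles there are (up to units) exactly two non-unit factors, associate to $x_k$ and $x_k^*$ respectively. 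If $\cA(\bx,B)$ were factorial, the factorization $\prod_j g_j(P,Q)$ would have to contain, up to units, at most two non-unit factors; so at least $r-2\ge 0$ of the $g_j(P,Q)$ are units and the remaining ones give back $x_k,x_k^*$ up to units — in particular one of them, say $g_1(P,Q)$, would be associate to $x_k$, i.e. $g_1(P,Q) = \la\, x_{n+1}^{c_{n+1}}\cdots x_p^{c_p}\, x_k$ for some $\la\in K^\times$ and $c_i\in\Z$, by Corollary~\ref{associate}(i).

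The main obstacle, and the step I expect to need the most care, is deriving a contradiction from such a relation $g_1(P,Q) = (\text{unit})\cdot x_k$ — that is, showing $g_j(P,Q)$ can be neither a unit nor an associate of $x_k$ (or of $x_k^*$). For the unit case: $g_j(P,Q)$ is a sum of at least two distinct monomials in $x_1,\dots,x_m$ (because $g_j$ is nonconstant and $P,Q$ are multiplicatively independent monomials, so distinct monomials in $X,Y$ map to distinct monomials in the algebraically independent $x_1,\dots,x_m$; here one uses $m>1$ and the structure of $P,Q$), hence by Lemma~\ref{lemma1} it is not a unit in $\cL_\bx$, hence not in $\cA(\bx,B)$. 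For the associate case: $x_k^* = x_k^{-1}(P^d+Q^d)$ involves $x_k^{-1}$, while $g_j(P,Q)$ is a Laurent polynomial in $x_1,\dots,x_p$ not involving $x_k$ at all (since $\beta_k = 0$: indeed $b_{kk}=0$, so $x_k$ appears in neither $P$ nor $Q$) — so $g_j(P,Q)$ times a unit can equal $x_k$ only if $g_j(P,Q)$ is, up to a unit, a power of $x_k$, which is impossible as $g_j(P,Q)$ has at least two distinct monomials none of which is a power of $x_k$. This forces every $g_j(P,Q)$ to be a non-unit, non-associate-to-$x_k$-or-$x_k^*$ element, yet the product of all $r\ge 2$ of them equals the product $x_k\cdot x_k^*$ of exactly two irreducibles; matching irreducible factorizations is then impossible, contradicting factoriality. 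Hence $\cA(\bx,B)$ is not factorial, as claimed. I would double-check the bookkeeping of which coefficient powers $x_{n+1},\dots,x_m$ get absorbed where, since that is the only place routine but fiddly computation enters.
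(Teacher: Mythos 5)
Your argument is correct and follows the paper's proof essentially verbatim: factor $X^d+Y^d$, substitute the monomials $P,Q$ coming from the exchange relation at $k$, and use Theorem~\ref{Thmirr}, Theorem~\ref{Thminvert} and Corollary~\ref{associate} to see that $x_kx_k^*=\prod_j g_j(P,Q)$ violates unique factorization because no $g_j(P,Q)$ is a unit or an associate of $x_k$ (the paper only needs non-associativity with $x_k$, exactly as you note). The one superfluous step is your worry about clearing denominators: since each $b_{ik}/d$ is a non-negative integer, $P$ and $Q$ are honest monomials and every $g_j(P,Q)$ is already a polynomial in $x_1,\ldots,x_m$, hence lies in $\cA(\bx,B)$ with no adjustment by units or coefficients needed.
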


\begin{proof}
Let $X^d+Y^d = f_1 \cdots f_t$, where the $f_j$ are irreducible polynomials in $K[X,Y]$.
Since $X^d+Y^d$ is not irreducible in $K[X,Y]$, we have $t \ge 2$.
Let $y_k := \mu_{(\bx,B)}(x_k)$.
The corresponding exchange relation is
$$
x_ky_k = \prod_{b_{ik}>0} x_i^{b_{ik}} +  
\prod_{b_{ik}<0} x_i^{-b_{ik}} = M^d+N^d= \prod_{j=1}^tf_j(M,N),
$$
where
$$
M:=  \prod_{b_{ik}>0} x_i^{b_{ik}/d}
\text{\;\;\; and \;\;\;}
N:=  \prod_{b_{ik}<0} x_i^{-b_{ik}/d}.
$$
Clearly, each $f_j(M,N)$ is contained in $\cA(\bx,B)$.
To get a contradiction, assume that $\cA(\bx,B)$ is factorial.
By Theorem~\ref{Thminvert} none of the elements
$f_j(M,N)$ is invertible in $\cA(\bx,B)$.
Since $\cA(\bx,B)$ is factorial, each $f_j(M,N)$ is equal to a product
$f_{1j} \cdots f_{a_jj}$, where the $f_{ij}$ are irreducible in
$\cA(\bx,B)$ and $a_j \ge 1$.
By Theorem~\ref{Thmirr} the cluster variables $x_k$ and
$y_k$ are irreducible in $\cA(\bx,B)$.
It follows that $a_1+ \cdots + a_t = 2$, since
$\cA(\bx,B)$ is factorial.
This implies $t=2$ and
$a_1=a_2=1$.
In particular, $f_1(M,N)$ and $f_2(M,N)$ are irreducible in $\cA(\bx,B)$, and we have $x_ky_k = f_1(M,N)f_2(M,N)$.
For $j=1,2$ the elements $x_k$ and $f_j(M,N)$ cannot be associate, since $f_j(M,N)$ is just a $K$-linear combination
of monomials in $\{ x_1,\ldots,x_m \} \setminus \{ x_k\}$. (Here we use Corollary~\ref{associate}(i) and the fact that $b_{kk}=0$.)
This is a contradiction to the factoriality of $\cA(\bx,B)$. 
\end{proof}

Note that a polynomial of the form $X^d+Y^d$ is
irreducible if and only if $X^d+1$ is irreducible.

\begin{Cor}
Let $K=\C$, $m=n=2$ and 
$$
B = \left(\bbm
0 & -c\\
d & 0
\ebm\right)
$$
with $c \ge 1$ and $d \ge 2$.
Then $\cA(\bx,B)$ is not factorial.
\end{Cor}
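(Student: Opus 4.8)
The plan is to deduce this corollary directly from Proposition~\ref{nonfac4}, so the work reduces to checking its hypothesis. With $m=n=2$ and the given $B = \left(\bbm 0 & -c\\ d & 0 \ebm\right)$, the second column is $(-c,0)^{\mathrm t}$, so $\gcd(b_{12},b_{22}) = \gcd(c,0) = c$. Hence I would instead apply Proposition~\ref{nonfac4} to the index $k=1$: the first column of $B$ is $(0,d)^{\mathrm t}$, and $\gcd(b_{11},b_{21}) = \gcd(0,d) = d$. So the relevant polynomial is $X^d + Y^d \in \C[X,Y]$, and I must verify it is not irreducible when $d \ge 2$.

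For the reducibility of $X^d+Y^d$ over $\C$, I would invoke the remark immediately preceding the corollary: $X^d+Y^d$ is irreducible iff $X^d+1$ is irreducible. Since $\C$ is algebraically closed, $X^d+1 = \prod_{j=1}^d (X - \zeta_j)$ where the $\zeta_j$ are the $d$-th roots of $-1$; for $d \ge 2$ this is a product of at least two non-constant factors, so $X^d+1$ is not irreducible, and therefore neither is $X^d+Y^d$. (Equivalently one can write $X^d+Y^d = \prod_{j=1}^d (X - \zeta_j Y)$ directly.) The hypothesis of Proposition~\ref{nonfac4} is thus satisfied with $k=1$, and the proposition yields that $\cA(\bx,B)$ is not factorial.

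There is essentially no obstacle here; the only point requiring a moment's care is not misreading which column has which gcd — the constraint $d \ge 2$ is exactly what makes the first column's gcd large enough, while the condition $c \ge 1$ is needed merely to ensure $B$ is a legitimate connected exchange matrix (if $c=0$ the matrix $B$ would be the zero matrix, violating connectedness, and the first column would still give the same argument, but one wants an honest seed). So the proof is a two-line invocation, and I would write it as: "This follows from Proposition~\ref{nonfac4} applied with $k=1$, since $\gcd(b_{11},b_{21}) = \gcd(0,d) = d$ and $X^d+Y^d = \prod_{j=1}^d(X-\zeta_jY)$ is reducible in $\C[X,Y]$ for $d \ge 2$, where $\zeta_1,\ldots,\zeta_d$ are the $d$-th roots of $-1$."
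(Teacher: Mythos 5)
Your proof is correct and matches the paper's argument exactly: both apply Proposition~\ref{nonfac4} with $k=1$, noting $\gcd(0,d)=d$ and that $X^d+1$ (hence $X^d+Y^d$) is reducible over $\C$ for $d\ge 2$. No issues.
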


\begin{proof}
For $k=1$ the assumptions of Proposition~\ref{nonfac4}
hold.
(We have $\gcd(0,d) = d$, and 
the polynomial $X^d+ 1$
is not irreducible in $\C[X]$.)
\end{proof}

\begin{Cor}
Let $m=n=2$ and
$$
B = \left(\bbm
0 & -c\\
d & 0
\ebm\right)
$$
with $c \ge 1$ and $d \ge 3$ an odd number.
Then $\cA(\bx,B)$ is not factorial.
\end{Cor}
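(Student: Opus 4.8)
The plan is to deduce this corollary directly from Proposition~\ref{nonfac4}, exactly as the previous corollary was deduced. First I would specialize the general criterion: for the given $B$ with $m=n=2$, take $k=1$, so that the first column of $B$ is $(0,d)^T$ and $d := \gcd(b_{11},b_{21}) = \gcd(0,d) = d$. By hypothesis $d \ge 3$ is odd, so it remains only to check that $X^d + Y^d$ is reducible in $K[X,Y]$, equivalently (by the remark following Proposition~\ref{nonfac4}) that $X^d + 1$ is reducible in $K[X]$.

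The key step is therefore the reducibility of $X^d+1$ over $K$ for odd $d \ge 3$. Since $d$ is odd, $-1 = (-1)^d$ is a $d$-th power, so $X = -1$ is a root of $X^d + 1$, and hence $(X+1) \mid (X^d+1)$ in $K[X]$; explicitly $X^d + 1 = (X+1)(X^{d-1} - X^{d-2} + \cdots - X + 1)$. Because $d \ge 3$, the second factor has degree $d-1 \ge 2 \ge 1$, so neither factor is a unit, and $X^d+1$ is reducible. This uses only that $K$ has characteristic $0$ (so $-1 \ne 1$ and the factorization is nontrivial); the case $K = \Z$ is not in play here since char $0$ is assumed in the construction, but the factorization $X^d+1 = (X+1)(X^{d-1}-\cdots+1)$ is valid over $\Z$ as well. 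I would not even need to invoke char $0$ explicitly beyond what Proposition~\ref{nonfac4} already assumes.

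With these two observations in hand, Proposition~\ref{nonfac4} applies verbatim and gives that $\cA(\bx,B)$ is not factorial. There is no real obstacle: the only thing to watch is making sure the greatest common divisor is computed correctly (it is $d$, not $1$, precisely because $b_{11}=0$) and that the reducibility argument genuinely produces two non-unit factors, which needs $d \ge 3$ rather than merely $d \ge 2$ — for $d=2$ one would have $X^2+1$, which is irreducible over $\R$ but this corollary does not assume $K=\C$, so the oddness hypothesis is what makes the argument work over an arbitrary admissible $K$. A clean one-line proof suffices.

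\begin{proof}
Apply Proposition~\ref{nonfac4} with $k=1$. Since $b_{11}=0$ and $b_{21}=d$, we have $\gcd(b_{11},b_{21}) = d$. As $d \ge 3$ is odd, $X^d+1 = (X+1)(X^{d-1}-X^{d-2}+\cdots-X+1)$ is a product of two polynomials of positive degree, hence $X^d+1$, and therefore also $X^d+Y^d$, is not irreducible in $K[X,Y]$. By Proposition~\ref{nonfac4}, $\cA(\bx,B)$ is not factorial.
\end{proof}
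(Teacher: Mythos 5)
Your proof is correct and follows exactly the paper's own argument: apply Proposition~\ref{nonfac4} with $k=1$, note $\gcd(0,d)=d$, and use the factorization $X^d+1=(X+1)\left(\sum_{j=0}^{d-1}(-1)^jX^j\right)$ for odd $d\ge 3$ to see that $X^d+Y^d$ is reducible. Your added remarks on why oddness (rather than just $d\ge 2$) is needed over an arbitrary admissible $K$ are accurate and consistent with the paper's intent.
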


\begin{proof}
For $k=1$ the assumptions of Proposition~\ref{nonfac4}
hold. 
(We have $\gcd(0,d) = d$, and for
odd $d$ 
we have 
$$
X^d+1 = (X+1)\left(\sum_{j=0}^{d-1} (-1)^jX^j\right).
$$
Thus $X^d+1$ is not irreducible in $K[X]$.)
\end{proof}


\section{Examples of factorial cluster algebras}\label{Sec6.2}


\subsection{Cluster algebras of Dynkin type $\A$
as polynomial rings}
\label{linear}
Assume $m=n+1=p+1$, and let
$B \in M_{m,n}(\Z)$ be the matrix
$$
B = 
\left(\bbm
0&-1&&&&\\
1&0&-1&&&\\
&1&0&\ddots&&\\
&&1&\ddots&-1&\\
&&&\ddots&0&-1\\
&&&&1&0\\\hline
&&&&&1
\ebm
\right).
$$
Obviously, $B^\circ$ is skew-symmetric,
$\Gamma(B)$ is the quiver
$$
\xymatrix{
m \ar[r] & \cdots \ar[r] & 2 \ar[r] & 1,
}
$$
and $\cA(\bx,B)$ is a cluster algebra of Dynkin type
$\A_n$.
Note that $\cA(\bx,B)$ has exactly one coefficient, and that
this coefficient is non-invertible.

Let $(\bx[0],B[0]) := (\bx,B)$.
For each $1 \le i \le m-1$
we define inductively a seed by 
$$
(\bx[i],B[i]) := 
\mu_{m-i} \cdots \mu_2 \mu_1(\bx[i-1],B[i-1]).
$$ 
For $0 \le i \le m-1$ set
$(x_1[i],\ldots,x_m[i]) := \bx[i]$.

For simplicity we define $x_0[i] := 1$ and
$x_{-1}[i] := 0$ for all $i$.

\begin{Lem}\label{linear1}
For $0 \le i \le m-2$, 
$1 \le k \le m-1-i$ and $0 \le j \le i$ we have
\begin{equation}\label{eq3}
\mu_{(\bx[i],B[i])}(x_k[i]) = 
\frac{x_{k-1}[i]+x_{k+1}[i]}{x_k[i]} = 
\frac{x_{k-1+j}[i-j]+x_{k+1+j}[i-j]}{x_{k+j}[i-j]}. 
\end{equation}
\end{Lem}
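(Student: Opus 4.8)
The plan is to prove Lemma~\ref{linear1} by induction, establishing the two asserted equalities in two separate steps.

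First I would prove the left-hand equality
$$
\mu_{(\bx[i],B[i])}(x_k[i]) = \frac{x_{k-1}[i]+x_{k+1}[i]}{x_k[i]}
$$
for all $0 \le i \le m-2$ and $1 \le k \le m-1-i$. The key point is to keep track of the shape of the exchange matrix $B[i]$. Starting from $B[0] = B$, whose principal part corresponds to the linear quiver $m \to \cdots \to 2 \to 1$ with a single non-invertible coefficient attached at vertex $1$, I would show by induction on $i$ that $B[i]$ again has a ``linear'' shape: its principal part (on vertices $1,\ldots,m-1-i$, say, after the mutations have pushed things around) still corresponds to a path quiver, and the coefficient rows record a path of length $i$ hanging off one end. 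Concretely one checks that applying the composite mutation $\mu_{m-i}\cdots\mu_2\mu_1$ to such a seed again yields such a seed; this is the routine bookkeeping with the mutation rule for $B$, using that all exchange relations here are of the form $x_k x_k' = x_{k-1}+x_{k+1}$ because every column of the relevant exchange matrix has exactly one $+1$ and one $-1$ (or, at the boundary, just one nonzero entry, which is where the conventions $x_0[i]:=1$, $x_{-1}[i]:=0$ come in). Once the sign pattern of the $k$th column of $B[i]$ is known to be ``$+1$ in row $k-1$, $-1$ in row $k+1$'' (with the boundary conventions), the exchange relation \eqref{mutationrelation} gives the claimed formula immediately.

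Next I would prove the right-hand equality, i.e.\ that the quantity $\dfrac{x_{k-1+j}[i-j]+x_{k+1+j}[i-j]}{x_{k+j}[i-j]}$ is independent of $j$ for $0 \le j \le i$. It suffices to treat the step from $j$ to $j+1$, i.e.\ to show
$$
\frac{x_{k-1}[i]+x_{k+1}[i]}{x_k[i]} = \frac{x_{k}[i-1]+x_{k+2}[i-1]}{x_{k+1}[i-1]}.
$$
Here I would unwind the definition $(\bx[i],B[i]) = \mu_{m-i}\cdots\mu_1(\bx[i-1],B[i-1])$: the cluster $\bx[i]$ is obtained from $\bx[i-1]$ by mutating at $1,2,\ldots,m-i$ in that order, so $x_\ell[i]$ equals $\mu$ applied to $x_\ell[i-1]$ in the appropriate intermediate seed, and using the left-hand equality already proved (for the index $i-1$, applied in the intermediate seeds) one rewrites $x_\ell[i]$ as $\dfrac{(\text{left neighbour})+(\text{right neighbour})}{x_\ell[i-1]}$ with neighbours taken in the $[i-1]$-cluster. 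Substituting these expressions into the left-hand side and clearing denominators reduces the identity to a polynomial identity among the $x_\ell[i-1]$, which collapses by direct cancellation. (This is exactly the kind of three-term recurrence manipulation familiar from the $\A_n$ frieze pattern / continuant picture.)

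The main obstacle I expect is the first step: setting up the induction hypothesis on the matrices $B[i]$ precisely enough that the combinatorics of the composite mutation $\mu_{m-i}\cdots\mu_1$ is transparent, and in particular handling the two boundary columns correctly so that the conventions $x_0[i]:=1$ and $x_{-1}[i]:=0$ genuinely absorb the degenerate exchange relations. Once the ``linear shape is preserved'' statement is correctly formulated, everything else — both the left equality and the $j$-invariance — is a mechanical consequence of the three-term exchange relation $x_k x_k' = x_{k-1}+x_{k+1}$, and I would not expect any further difficulty.
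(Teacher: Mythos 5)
Your overall strategy coincides with the paper's own (two-sentence) proof: the first equality ``follows from the definition of $(\bx[i],B[i])$ and the mutation rule'', and the second ``is proved by induction on $i$''. Your treatment of the first equality --- tracking the shape of the columns of $B[i]$ under the composite mutation and invoking the boundary conventions $x_0[i]:=1$, $x_{-1}[i]:=0$ --- is exactly what is needed. However, there is a concrete error in your argument for the $j$-invariance. You claim that, since $\bx[i]$ is obtained from $\bx[i-1]$ by mutating at $1,2,\ldots,m-i$ in order, each $x_\ell[i]$ can be rewritten as $\frac{(\text{left neighbour})+(\text{right neighbour})}{x_\ell[i-1]}$ with neighbours taken in the $[i-1]$-cluster. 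That is false for $\ell\ge 2$: the left-hand equality of the lemma holds for the seeds $(\bx[i],B[i])$ themselves, not for the intermediate seeds occurring partway through $\mu_{m-i}\cdots\mu_1$. After mutating at $\ell-1$, the $\ell$-th column of the intermediate exchange matrix acquires two entries of the same sign, so the exchange monomial is a \emph{product} of two variables; the correct relation is the frieze/$T$-system relation
$$
x_\ell[i]\,x_\ell[i-1]\;=\;x_{\ell-1}[i]\,x_{\ell+1}[i-1]+1
\qquad (1\le \ell\le m-i,\ x_0[\cdot]:=1).
$$
For instance with $m=3$ one finds $x_2[1]=\frac{x_1[1]x_3+1}{x_2}$, not $\frac{x_1+x_3}{x_2}$; the latter is a different cluster variable, namely $\mu_{(\bx,B)}(x_2)$ --- which is precisely what the second equality of the lemma is meant to compare $x_1[1]$-data with, so running your verification with the claimed three-term form would fail.

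The gap is local and repairable. Once the displayed relation is established (by the same bookkeeping on the intermediate exchange matrices that you already propose for the first equality), the inductive step from $j$ to $j+1$, i.e.
$$
\bigl(x_{k-1}[i]+x_{k+1}[i]\bigr)\,x_{k+1}[i-1]=\bigl(x_k[i-1]+x_{k+2}[i-1]\bigr)\,x_k[i],
$$
follows by substituting $x_{k+1}[i]\,x_{k+1}[i-1]=x_k[i]\,x_{k+2}[i-1]+1$ on the left and $x_k[i]\,x_k[i-1]=x_{k-1}[i]\,x_{k+1}[i-1]+1$ on the right and cancelling. So your plan goes through, but you must replace the asserted three-term form of the intermediate exchange relations by the frieze relation; as written, that step is wrong.
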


\begin{proof}
The first equality follows from the definition of
$(\bx[i],B[i])$ and the mutation rule.
The second equality is proved by induction on $i$.
\end{proof}

\begin{Cor}\label{linear2}
For $0 \le i \le m-2$ we have
\begin{align}
\label{eq5}
x_{i+2} &= x_1[i+1]x_{i+1}-x_i,\\ 
\label{eq6}
x_{i+1}[1] &= x_1[i+1]x_i[1] - x_{i-1}[1]. 
\end{align}
\end{Cor}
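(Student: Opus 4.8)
The plan is to read both identities off Lemma~\ref{linear1}; the only ingredient not already contained in that lemma is the remark that the first coordinate of $\bx[i+1]$ is the $\mu_1$-image of the first coordinate of $\bx[i]$. Indeed, $(\bx[i+1],B[i+1]) = \mu_{m-1-i}\cdots\mu_2\mu_1(\bx[i],B[i])$, and since each mutation $\mu_j$ alters only the $j$th cluster variable, none of $\mu_2,\ldots,\mu_{m-1-i}$ touches the first coordinate; hence $x_1[i+1] = \mu_{(\bx[i],B[i])}(x_1[i])$ for $0 \le i \le m-2$. In particular, $x_1[i+1]$ is exactly the left-hand side of (\ref{eq3}) specialized to $k=1$.

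Using this, I would prove (\ref{eq5}) by putting $k=1$ and $j=i$ in (\ref{eq3}): the last expression there becomes $\frac{x_i[0]+x_{i+2}[0]}{x_{i+1}[0]} = \frac{x_i+x_{i+2}}{x_{i+1}}$, using $\bx[0]=\bx$ and the convention $x_0[0]=1$ (needed when $i=0$). Thus $x_1[i+1] = \frac{x_i+x_{i+2}}{x_{i+1}}$, and clearing the denominator gives $x_{i+2} = x_1[i+1]x_{i+1} - x_i$. For (\ref{eq6}) I would instead take $k=1$ and $j=i-1$ in (\ref{eq3}), which is legitimate exactly when $1\le i\le m-2$; the last expression in (\ref{eq3}) is then $\frac{x_{i-1}[1]+x_{i+1}[1]}{x_i[1]}$, so $x_{i+1}[1] = x_1[i+1]x_i[1] - x_{i-1}[1]$. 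The one remaining value $i=0$ of (\ref{eq6}) is the tautology $x_1[1] = x_1[1]\cdot x_0[1] - x_{-1}[1]$, valid by the conventions $x_0[1]=1$ and $x_{-1}[1]=0$.

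I do not expect a real difficulty here; the only point requiring attention is matching index ranges. One has to check that $k=1$ is admissible in Lemma~\ref{linear1} for every $i$ in the asserted range — it is, since $1\le m-1-i$ whenever $i\le m-2$ — and that the shift parameters $j=i$ and $j=i-1$ stay inside $\{0,\ldots,i\}$, which is precisely why the value $i=0$ of (\ref{eq6}) must be handled separately. After these checks, each identity follows by clearing a single denominator.
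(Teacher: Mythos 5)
Your proof is correct and follows exactly the paper's route: both identities are obtained from Lemma~\ref{linear1} by specializing to $k=1$ with $j=i$ and $j=i-1$ respectively, after identifying $x_1[i+1]$ with $\mu_{(\bx[i],B[i])}(x_1[i])$. You are in fact slightly more careful than the paper, which leaves implicit both that identification and the separate (trivial, by the conventions $x_0[1]=1$, $x_{-1}[1]=0$) case $i=0$ of (\ref{eq6}) where $j=i-1$ falls outside the range of the lemma.
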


\begin{proof}
Equation~(\ref{eq5}) follows from (\ref{eq3}) for $k=1$ and
$j=i$.
The case $k=1$ and $j=i-1$ yields
Equation~(\ref{eq6}).
\end{proof}

\begin{Prop}\label{linear3}
The elements
$x_1[0],x_1[1],\ldots,x_1[m-1]$ are algebraically independent and
$$
K[x_1[0],x_1[1],\ldots,x_1[m-1]] = \cA(\bx,B).
$$ 
In particular, the cluster algebra
$\cA(\bx,B)$
is a polynomial ring in
$m$ variables.
\end{Prop}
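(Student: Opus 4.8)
The plan is to prove the two assertions in sequence: first that $x_1[0],\ldots,x_1[m-1]$ generate $\cA(\bx,B)$ as a $K$-algebra, then that they are algebraically independent, whence the natural surjection from a polynomial ring in $m$ variables is an isomorphism.

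For the generation statement, I would first observe that $\cA(\bx,B)$ is generated as a $K$-algebra by the initial cluster $x_1,\ldots,x_m$ together with $x_1[1],\ldots,x_1[m-1]$: indeed the seeds $(\bx[i],B[i])$ are obtained by mutating only at vertices $1,\ldots,n$, and by Lemma~\ref{linear1} the only genuinely new cluster variables produced along this chain of mutations are the elements $x_1[i]$ (the other entries $x_k[i]$ with $k\ge 2$ reappear as shifted copies). More carefully, since $\cA(\bx,B)$ is a cluster algebra of Dynkin type $\A_n$ it has only finitely many cluster variables, and a standard count (or direct inspection of the exchange graph of type $\A_n$ with this particular coefficient configuration) shows that every cluster variable occurs among $\{x_1,\ldots,x_m\}\cup\{x_1[1],\ldots,x_1[m-1]\}$; the coefficient $x_m$ is itself equal to $x_1[0]$'s companion but is already in the list. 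Then I would use Corollary~\ref{linear2}: Equation~(\ref{eq5}) expresses $x_{i+2}$ as a polynomial in $x_1[i+1]$, $x_{i+1}$ and $x_i$, so by induction on $i$ each of $x_2,x_3,\ldots,x_m$ lies in $K[x_1[0],x_1[1],\ldots,x_1[m-1]]$ (starting from $x_1=x_1[0]$ and $x_1[1]=(1+x_2)/x_1$, i.e. $x_2=x_1x_1[1]-1$). Hence all the algebra generators of $\cA(\bx,B)$ lie in the subring $K[x_1[0],\ldots,x_1[m-1]]$, giving the inclusion $\cA(\bx,B)\subseteq K[x_1[0],\ldots,x_1[m-1]]$; the reverse inclusion is immediate since each $x_1[j]$ is a cluster variable.

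For algebraic independence, I would argue by a dimension/transcendence-degree count. The field $\Quot(\cA(\bx,B))=K(x_1,\ldots,x_m)$ has transcendence degree $m$ over $K$. Since $x_1[0],\ldots,x_1[m-1]$ generate $\cA(\bx,B)$, they generate this field, so at most $m$ of them can be algebraically independent, and a set of $m$ generators of a field of transcendence degree $m$ must in fact be algebraically independent. Alternatively, and more explicitly, one can show directly that $K(x_1[0],\ldots,x_1[m-1])=K(x_1,\ldots,x_m)$: the relations $x_2=x_1x_1[1]-1$ and, inductively, $x_{i+2}=x_1[i+1]x_{i+1}-x_i$ show each $x_j$ is a polynomial in the $x_1[\cdot]$'s, while each $x_1[j]$ is a rational function in the $x_i$'s by the Laurent phenomenon; so the two fields coincide, both have transcendence degree $m$, and $m$ field generators of an $m$-dimensional field are algebraically independent. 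Putting the two parts together, the surjective $K$-algebra homomorphism $K[T_0,\ldots,T_{m-1}]\to\cA(\bx,B)$, $T_j\mapsto x_1[j]$, has trivial kernel, so $\cA(\bx,B)$ is a polynomial ring in $m$ variables.

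The main obstacle I anticipate is the first step of the generation argument: justifying cleanly that $\{x_1,\ldots,x_m\}\cup\{x_1[1],\ldots,x_1[m-1]\}$ already contains \emph{all} cluster variables, rather than merely the ones visible along the chosen mutation path. This requires knowing the combinatorics of the type $\A_n$ exchange graph (there are $\binom{n+2}{2}-(n+1)=\binom{n+1}{2}$ non-coefficient cluster variables in the coefficient-free case, and here exactly $n$ of the listed $2m-1$ elements are coefficients or repetitions, matching up), or else an explicit verification that mutating at vertices $1,\ldots,n$ in the prescribed order visits every cluster variable exactly once apart from the predictable shifts described by Lemma~\ref{linear1}. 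Everything after that point is a routine induction using Corollary~\ref{linear2}.
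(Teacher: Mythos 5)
There is a genuine gap in your generation step. The claim that every cluster variable of $\cA(\bx,B)$ occurs in the set $\{x_1,\ldots,x_m\}\cup\{x_1[1],\ldots,x_1[m-1]\}$ is false: this set contains at most $2n$ cluster variables (plus the coefficient $x_m$), whereas a cluster algebra of type $\A_n$ has $n(n+3)/2$ cluster variables, which exceeds $2n$ for every $n\ge 2$. Concretely, for $n=2$ (so $m=3$) one computes $x_2[1]=(x_1+x_3+x_2x_3)/(x_1x_2)$, which is a cluster variable belonging to the cluster $\bx[1]$ but equal to none of $x_1,x_2,x_3,x_1[1],x_1[2]$. Your count $\binom{n+2}{2}-(n+1)=\binom{n+1}{2}$ is the number of positive roots, not of cluster variables, and Lemma~\ref{linear1} only identifies the variables obtained by mutating the \emph{named} seeds $(\bx[i],B[i])$ at position $k$; it says nothing about the variables such as $x_2[1]$ produced at the intermediate seeds of each chain $\mu_{m-i}\cdots\mu_1$. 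It happens to be true that every cluster variable is a polynomial in the $x_1[j]$ (e.g.\ $x_2[1]=x_1[1]x_1[2]-1$ above), but that is essentially the content of the proposition and cannot be assumed; proving it head-on would require explicit control of all $n(n+3)/2$ cluster variables. Your algebraic-independence argument via transcendence degree is fine and does not depend on the flawed enumeration, since the recursion $x_{i+2}=x_1[i+1]x_{i+1}-x_i$ already gives $K(x_1[0],\ldots,x_1[m-1])=K(x_1,\ldots,x_m)$.

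The paper avoids the enumeration problem entirely, and this is the whole point of Theorem~\ref{Thmfac}. One sets $U:=K[x_1[0],\ldots,x_1[m-1]]$, a polynomial ring and hence factorial, and shows that $U$ contains the two \emph{full} disjoint clusters $\bx$ and $\bx[1]$: Equation~(\ref{eq5}) gives $x_1,\ldots,x_m\in U$, and Equation~(\ref{eq6}) --- which your argument never uses --- gives $x_1[1],\ldots,x_m[1]\in U$ by the recursion $x_{i+1}[1]=x_1[i+1]x_i[1]-x_{i-1}[1]$. Theorem~\ref{Thmfac} then yields $U=\cA(\bx,B)=U(\bx,\bx[1])$ with no information needed about any other cluster variable. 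To repair your proof you should replace the enumeration step by exactly this: verify $x_k[1]\in U$ for all $k$ via Equation~(\ref{eq6}) and invoke the factoriality criterion.
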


\begin{proof}
It follows from Equation~(\ref{eq5}) that
$$
x_1[i] \in K(x_1,\ldots,x_{i+1}) \setminus K(x_1,\ldots,x_i)
$$
for all $1 \le i \le m-1$.
Since
$x_1,\ldots,x_m$ are algebraically independent,
this implies that
$x_1[0],x_1[1],\ldots,x_1[m-1]$ are algebraically
independent as well.
Thus 
$$
U := K[x_1[0],x_1[1],\ldots,x_1[m-1]]
$$ is a polynomial ring in $m$
variables.
In particular, $U$ is factorial.
Equation~(\ref{eq5}) implies that
$x_1,\ldots,x_m \in U$, and
Equation~(\ref{eq6}) yields that
$x_1[1],\ldots,x_m[1] \in U$.
Clearly, the clusters $\bx$ and $\bx[1]$ are disjoint.
Thus the assumptions of Theorem~\ref{Thmfac} 
are satisfied, and
we get
$U = \cA(\bx,B)$.
\end{proof}

The cluster algebra $\cA(\bx,B)$ as defined
above has been studied by several people.
It is related to a $T$-system of Dynkin type 
$\A_1$
with a certain boundary condition, see \cite{DK}. 
Furthermore, for $K=\C$ the cluster algebra
$\cA(\bx,B)$ is naturally isomorphic to the complexified 
Grothendieck ring  of the category $\cC_n$ of finite-dimensional
modules of level $n$ over the quantum loop algebra
of Dynkin type $\A_1$, see \cite{HL,N2}. 
It is well known, that $\cA(\bx,B)$ is a polynomial ring.
We just wanted to demonstrate how to use
Theorem~\ref{Thmfac} in practise.

\subsection{Acyclic cluster algebras as polynomial rings}\label{acyclic}
Let $C = (c_{ij}) \in M_{n,n}(\Z)$ be a 
\emph{generalized Cartan matrix}, i.e.
$C$ is symmetrizable, $c_{ii} = 2$ for
all $i$ and $c_{ij} \le 0$ for all $i \not= j$.

Assume that $m=2n=2p$, and let $(\bx,B)$ be a seed of $\cF$,
where
$B = (b_{ij}) \in M_{2n,n}(\Z)$ is defined as
follows:
For $1 \le i \le 2n$ and $1 \le j \le n$ let
$$
b_{ij} :=
\begin{cases}
0 & \text{if $i=j$},\\
-c_{ij} & \text{if $1 \le i < j \le n$},\\
c_{ij} & \text{if $1 \le j < i \le n$},\\
1     & \text{if $i = n+j$},\\
c_{i-n,j} & \text{if $n+1 \le i \le 2n$ and $i-n<j$},\\
0 & \text{if $n+1 \le i \le 2n$ and $i-n>j$}. 
\end{cases}
$$
Thus we have 
$$
B =
\left(\bbm
0 &b_{12} & b_{13} & \cdots & b_{1n} \\
b_{21} & 0 & b_{23} & \cdots & b_{2n} \\
b_{31} & b_{32} & 0 & \ddots & \vdots \\
\vdots & \vdots & \ddots & \ddots & b_{n-1,n} \\
b_{n1} &b_{n2} & \cdots & b_{n,n-1} & 0 \\\hline
1 &-b_{12} & -b_{13} & \cdots & -b_{1n} \\
0 & 1 & -b_{23} & \cdots & -b_{2n} \\
0 & 0 & 1 & \ddots & \vdots \\
\vdots & \vdots & \ddots & \ddots & -b_{n-1,n} \\
0 & 0 & 0 & 0 & 1 
\ebm
\right).
$$
Clearly, $(\bx,B)$ is an acyclic seed.
Namely, if $i \to j$ is an arrow in $\Sigma(B)$, then
$i < j$.
Up to simultaneous reordering of columns and rows, each
acyclic skew-symmetrizable matrix in $M_{n,n}(\Z)$ is of
the form $B^\circ$ with $B$ defined as above.
Note that $\cA(\bx,B)$ has exactly $n$ coefficients, and that
all these coefficients are non-invertible.

For $1 \le i \le n$ let
$$
(\bx[1],B[1]) :=\mu_n \cdots \mu_2\mu_1(\bx,B)
$$
and $(x_1[1],\ldots,x_{2n}[1]) := \bx[1]$.
Let $B_0 := B$, and for $1 \le i \le n$ let
$B_i := \mu_i(B_{i-1})$.
Thus we have $B_n = B[1]$.
It is easy to work out the matrices $B_i$ explicitly:
The matrix $B_i$ 
is obtained from $B_{i-1}$ by changing
the sign in the $i$th row and the $i$th column of the 
principal part $B_{i-1}^\circ$.
Furthermore, the $(n+i)$th row 
$$
(0,\ldots,0,1,-b_{i,i+1},-b_{i,i+2},\ldots,-b_{in})
$$ 
of $B_{i-1}$ gets replaced by
$$
(-b_{i1},-b_{i2},\ldots,-b_{i,i-1},-1,0,\ldots,0).
$$

If we write $N_+$ (resp. $N_-$) for the upper (resp. lower) triangular part of $B^\circ$,
we get
$$
B = \left(\bbm
1 & & N_+\\
&\ddots &\\
 N_- & & 1\\\hline
  1& &-N_+\\
 & \ddots &\\
0&&1 
\ebm\right)
\text{\;\;\; and \;\;\;}
B[1] = \left(\bbm
1 & & N_+\\
&\ddots &\\
 N_- & & 1\\\hline
  -1& &0\\
 & \ddots &\\
-N_-&&-1 
\ebm\right).
$$
In particular, the principal part $B^\circ$ of $B$
is equal to the principal part $B[1]^\circ$ of $B[1]$.

Now the definition of seed mutation yields
\begin{equation}\label{eq7}
x_k[1] = x_k^{-1}\left(
x_{n+k} + \prod_{i=1}^{k-1}x_i[1]^{b_{ik}}
 \prod_{i=k+1}^n x_i^{-b_{ik}}
\right)
\end{equation}
for $1 \le k \le n$.

\begin{Prop}\label{acyclic1}
The elements
$x_1,\ldots,x_n,x_1[1],\ldots,x_n[1]$
are algebraically independent and
$$
K[x_1,\ldots,x_n,x_1[1],\ldots,x_n[1]] = \cA(\bx,B).
$$ 
In particular, the cluster algebra
$\cA(\bx,B)$
is a polynomial ring in
$2n$ variables.
\end{Prop}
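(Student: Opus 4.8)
The plan is to mimic the proof of Proposition~\ref{linear3} by producing an explicit factorial subalgebra $U$ containing two disjoint clusters together with all coefficients, and then invoke Theorem~\ref{Thmfac}. The natural candidate is
$$
U := K[x_1,\ldots,x_n,x_1[1],\ldots,x_n[1]].
$$
First I would establish algebraic independence of the $2n$ listed elements. Since $x_1,\ldots,x_{2n}$ are algebraically independent, it suffices to show that for each $1\le k\le n$ the element $x_k[1]$ involves the coefficient $x_{n+k}$ in an essential way: from Equation~(\ref{eq7}), $x_k[1]\in K(x_1,\ldots,x_n,x_{n+1},\ldots,x_{n+k})\setminus K(x_1,\ldots,x_n,x_{n+1},\ldots,x_{n+k-1})$, because the term $x_k^{-1}x_{n+k}$ genuinely contains $x_{n+k}$ while the remaining summand $x_k^{-1}\prod_{i<k}x_i[1]^{b_{ik}}\prod_{i>k}x_i^{-b_{ik}}$ — by induction on $k$ — lies in $K(x_1,\ldots,x_n,x_{n+1},\ldots,x_{n+k-1})$. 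Hence $x_1,\ldots,x_n,x_1[1],\ldots,x_n[1]$ are algebraically independent, so $U$ is a polynomial ring in $2n$ variables and in particular factorial.

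Next I would check that $U$ contains everything required by Theorem~\ref{Thmfac} for the two clusters $\bx$ and $\bx[1]$. By construction $x_1,\ldots,x_n\in U$ and $x_1[1],\ldots,x_n[1]\in U$. The coefficients are $x_{n+1},\ldots,x_{2n}$ (all non-invertible, as $p=n$ here, so $p=\ldots$ — more precisely $m=2n=2p$ forces the localization set to be $x_{n+1}\cdots x_p$ with $p=n$, giving no invertible coefficients), and solving Equation~(\ref{eq7}) for $x_{n+k}$ yields
$$
x_{n+k} = x_k x_k[1] - \prod_{i=1}^{k-1}x_i[1]^{b_{ik}}\prod_{i=k+1}^n x_i^{-b_{ik}},
$$
which exhibits $x_{n+k}\in U$ for each $1\le k\le n$ (one should read this with the convention that the exponents $b_{ik}$ are non-negative: for $i<k$ one has $b_{ik}=-c_{ik}\ge 0$ and for $i>k$ one has $-b_{ik}=-c_{ik}\ge 0$, since $C$ is a generalized Cartan matrix; so these are honest monomials, not Laurent monomials). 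Finally $\bx$ and $\bx[1]$ are disjoint because $x_k\ne x_k[1]$ for all $1\le k\le n$ (each $x_k[1]$ is a non-monomial by algebraic independence of the $x_i$) and the remaining entries $x_{n+1},\ldots,x_{2n}$ agree with the coefficients and so are not cluster variables of either cluster.

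Having verified the hypotheses, Theorem~\ref{Thmfac} applied to the disjoint clusters $\bx$, $\bx[1]$ and the factorial subalgebra $U\subseteq\cA(\bx,B)$ gives $U=\cA(\bx,B)$, so $\cA(\bx,B)$ is a polynomial ring in $2n$ variables. The main obstacle is the bookkeeping around Equation~(\ref{eq7}): one must be careful that the exchange relation really has the stated shape for the seed $\mu_n\cdots\mu_1(\bx,B)$ and that the exponents appearing are genuinely non-negative, so that $x_{n+k}$ lies in the polynomial ring $U$ rather than merely in a localization. Once the sign conventions on $B$ and the explicit form of $B[1]$ are pinned down (as already recorded in the excerpt), everything else is formal, exactly parallel to the linear $\A_n$ case treated in Proposition~\ref{linear3}.
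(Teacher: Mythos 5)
Your proposal is correct and follows essentially the same route as the paper's proof: algebraic independence via $x_k[1]\in K(x_1,\ldots,x_{n+k})\setminus K(x_1,\ldots,x_{n+k-1})$ from Equation~(\ref{eq7}), then solving that equation for $x_{n+k}$ to see the coefficients lie in $U=K[x_1,\ldots,x_n,x_1[1],\ldots,x_n[1]]$, and finally applying Theorem~\ref{Thmfac} to the disjoint clusters $\bx$ and $\bx[1]$. The extra checks you record (non-negativity of the exponents $b_{ik}$ for $i<k$ and $-b_{ik}$ for $i>k$, and the disjointness of the clusters) are exactly the points the paper leaves implicit, and they are verified correctly.
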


\begin{proof}
By Equation~(\ref{eq7}) and induction we have
$$
x_k[1] \in K(x_1,\ldots,x_{n+k}) \setminus K(x_1,\ldots,x_{n+k-1})
$$
for all $1 \le k \le n$.
It follows that
$x_1,\ldots,x_n,x_1[1],\ldots,x_n[1]$
are algebraically independent, and that the clusters
$\bx$ and $\bx[1]$ are disjoint.
Let 
$$
U := K[x_1,\ldots,x_n,x_1[1],\ldots,x_n[1]].
$$
Thus $U$ is a polynomial ring in $2n$ variables.
In particular, $U$ is factorial.
It follows from Equation~(\ref{eq7})
that
\begin{equation}\label{eq11}
x_{n+k} = x_k[1]x_k - 
\prod_{i=1}^{k-1} x_i[1]^{b_{ik}}
\prod_{i=k+1}^n x_i^{-b_{ik}}.
\end{equation}
This implies 
$x_{n+k} \in U$ for all
$1 \le k \le n$.
Thus the assumptions of Theorem~\ref{Thmfac} are satisfied,
and we can conclude that
$U = \cA(\bx,B)$.
\end{proof}

Proposition~\ref{acyclic1} is a special case of a much more
general result proved in \cite{GLSKM}.
But the proof presented here is new and more
elementary.

Next, we compare the basis 
$$
\cP_{\rm GLS} := \left\{
x[\ba] := x_1^{a_1} \cdots x_n^{a_n}x_1[1]^{a_{n+1}} \cdots 
x_n[1]^{a_{2n}} \mid
\ba = (a_1,\ldots,a_{2n}) \in \N^{2n}
\right\}
$$
of $\cA(\bx,B)$ resulting from Proposition~\ref{acyclic1} 
with a basis constructed by Berenstein,
Fomin and Zelevinsky \cite{BFZ}.
For $1 \le k \le n$ let 
\begin{equation}\label{eq10}
x_k' := \mu_{(\bx,B)}(x_k) = x_k^{-1} 
\left(x_{n+k}\prod_{i=1}^{k-1}x_i^{b_{ik}} +
\prod_{i=k+1}^n x_i^{-b_{ik}}
\prod_{i=1}^{k-1}x_{n+i}^{b_{ik}}\right)
\end{equation}
and set
\begin{multline*}
\cP_{\rm BFZ} := 
\{ x'[\ba] := x_1^{a_1} \cdots x_n^{a_n}x_{n+1}^{a_{n+1}}
\cdots x_{2n}^{a_{2n}}
(x_1')^{a_{2n+1}} \cdots 
(x_n')^{a_{3n}} \mid \\
\ba = (a_1,\ldots,a_{3n}) \in \N^{3n},
a_ka_{2n+k} = 0
\text{ for } 1 \le k \le n \}.
\end{multline*}

\begin{Prop}[{{\cite[Corollary~1.21]{BFZ}}}]\label{acyclic2}
The set $\cP_{\rm BFZ}$ is a basis of $\cA(\bx,B)$.
\end{Prop}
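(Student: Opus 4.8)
The plan is to deduce this from Proposition~\ref{acyclic1}, which already identifies $\cA(\bx,B)$ with the polynomial ring $K[x_1,\ldots,x_n,x_1[1],\ldots,x_n[1]]$ and so exhibits $\cP_{\rm GLS}$ as a $K$-basis. It therefore suffices to show that $\cP_{\rm BFZ}$ is a second $K$-basis, which I would establish in the two usual steps: that $\cP_{\rm BFZ}$ spans $\cA(\bx,B)$, and that $\cP_{\rm BFZ}$ is linearly independent.

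For spanning, the key input is the exchange relation~(\ref{eq10}): it writes $x_kx_k'$ as a \emph{binomial} $P_k$ whose two monomials involve only $x_1,\ldots,x_{2n}$ and no $x_j'$. Consequently, inside the subalgebra $A':=K[x_1,\ldots,x_{2n},x_1',\ldots,x_n']$ any monomial in these generators that is divisible by $x_kx_k'$ may be rewritten by substituting $P_k$, a move that strictly lowers the total exponent $a_{2n+1}+\cdots+a_{3n}$ of the primed variables and introduces no new primed variable; iterating this straightening expresses every element of $A'$ as a $K$-linear combination of standard monomials, so that $\Span(\cP_{\rm BFZ})=A'$. It then remains to identify $A'$ with $\cA(\bx,B)$. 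For this I would use that $\cA(\bx,B)$ is factorial (Proposition~\ref{acyclic1}), whence by Corollary~\ref{Main3} it equals its upper bound $U(\bx,B)=\cL_\bx\cap\bigcap_{k=1}^n\cL_{\bx_k}$, where $(\bx_k,B_k):=\mu_k(\bx,B)$; the equality $A'=U(\bx,B)$ for the acyclic seed $(\bx,B)$ is precisely the ``lower bound equals upper bound'' theorem of \cite{BFZ}, which is itself proved by means of the relations~(\ref{eq10}). Alternatively, one could try a direct check that $x_k[1]\in A'$ for every $k$, by induction on $k$, comparing~(\ref{eq7}) with~(\ref{eq10}) and using that the cluster variable $x_k$ is prime in the factorial ring $\cA(\bx,B)$ (Theorem~\ref{Thmirr}) to carry out the required division.

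For linear independence, I would expand every element of $\cP_{\rm BFZ}$ inside $\cL_\bx=K[x_1^{\pm1},\ldots,x_n^{\pm1},x_{n+1},\ldots,x_{2n}]$ and compare leading Laurent terms. By~(\ref{eq10}) each $x_k'$ is $x_k^{-1}$ times a sum of two monomials, so the power of $x_k$ appearing in a standard monomial $x'[\ba]$ is governed by $a_k-a_{2n+k}$ together with the contributions of the factors $(x_j')^{a_{2n+j}}$ with $j\neq k$. Fixing an appropriate total order on the Laurent exponent vectors — for instance, lexicographically recording for each $k$ the $x_k$-degree and then the $x_{n+k}$-degree — one shows that distinct index vectors $\ba$ yield distinct leading Laurent monomials; here the defining constraint $a_ka_{2n+k}=0$ is exactly what forces the assignment from $\ba$ to its leading exponent vector to be injective, since it rules out any cancellation between a positive power of $x_k$ and the $x_k^{-1}$ produced by $x_k'$. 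This gives the linear independence of $\cP_{\rm BFZ}$ and completes the proof. I expect the main obstacle to be the identification $A'=\cA(\bx,B)$ in the spanning step, i.e. the coincidence of the lower and upper bounds for acyclic seeds; the straightening procedure and the leading-term bookkeeping are otherwise essentially formal.
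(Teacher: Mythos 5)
The paper does not actually prove this proposition: it is imported verbatim from \cite[Corollary~1.21]{BFZ}, and the only related material in the text is the remark after Equation~(\ref{eq12}), which goes in the opposite direction (the right-hand side of (\ref{eq12}) is divisible by $x_k$, so each $x_k'$ is an explicit polynomial in $x_1,\ldots,x_n,x_1[1],\ldots,x_n[1]$ and $\cP_{\rm BFZ}$ expands unitriangularly in $\cP_{\rm GLS}$). So your argument is necessarily a different route; in outline it is the original BFZ one. Your linear-independence half is sound: with the interleaved lexicographic order you describe (compare the $x_k$-exponent, then the $x_{n+k}$-exponent, for $k=1,\ldots,n$ in turn), the minimal term of $x_j'$ is always the second exchange monomial of (\ref{eq10}) --- the $x_{n+j}$-tiebreak is exactly what forces this when all $b_{ij}$ with $i<j$ vanish --- all expansion coefficients are positive so there is no cancellation, and the constraint $a_ka_{2n+k}=0$ makes $\ba\mapsto(\text{leading exponent vector})$ injective by an easy induction on $k$. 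The straightening argument showing $\Span_K(\cP_{\rm BFZ})=A'$ is likewise correct.

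The genuine soft spot is the identification $A'=\cA(\bx,B)$. Your primary route just cites the lower-bound-equals-upper-bound theorem of \cite{BFZ}, which is the entire content of the acyclic case there; at that point one might as well cite Corollary~1.21 itself, as the paper does. (There is also a mismatch you would have to address: the bounds in \cite{BFZ} are taken with the coefficients $x_{n+1},\ldots,x_{2n}$ inverted, whereas here $p=n$, so $U(\bx,B)$ and $\cP_{\rm BFZ}$ only involve non-negative powers of the coefficients; descending from the localized statement to the polynomial one is not automatic.) Your proposed self-contained alternative --- proving $x_k[1]\in A'$ by induction on $k$ using that $x_k$ is prime --- has a concrete gap: primality of $x_k$ in the factorial ring $\cA(\bx,B)$ lets you divide $x_{n+k}+\prod_{i<k}x_i[1]^{b_{ik}}\prod_{i>k}x_i^{-b_{ik}}$ by $x_k$ \emph{inside $\cA(\bx,B)$}, but gives no control over whether the quotient $x_k[1]$ lies in the subalgebra $A'$; divisibility in an ambient ring does not pass to a subring. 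The paper's hint circumvents exactly this by working in the other direction: one verifies the divisibility of the right-hand side of (\ref{eq12}) by $x_k$ inside the polynomial ring $K[x_1,\ldots,x_n,x_1[1],\ldots,x_n[1]]$, so that every standard monomial is an explicit polynomial in the $\cP_{\rm GLS}$ variables, and then extracts the basis property from the triangularity of that change of basis. Either that computation or BFZ's Theorem~1.20 has to be supplied; as written, your spanning step is a reduction, not a proof.
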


Note that the basis $\cP_{\rm GLS}$ is constructed by using cluster variables from two
seeds, namely $(\bx,B)$ and $\mu_n \cdots \mu_1(\bx,B)$,
whereas $\cP_{\rm BFZ}$ uses cluster variables from $n+1$
seeds, namely $(\bx,B)$ and
$\mu_k(\bx,B)$, where $1 \le k \le n$.

Now we insert Equation~(\ref{eq11}) into 
Equation~(\ref{eq10}) and obtain
\begin{multline}\label{eq12}
x_kx_k' = 
\left(x_k[1]x_k - 
\prod_{i=1}^{k-1} x_i[1]^{b_{ik}}
\prod_{i=k+1}^n x_i^{-b_{ik}}\right)
\prod_{i=1}^{k-1}x_i^{b_{ik}} + \\
\prod_{i=k+1}^n x_i^{-b_{ik}}
\prod_{i=1}^{k-1}
\left(x_i[1]x_i - 
\prod_{j=1}^{i-1} x_j[1]^{b_{ji}}
\prod_{j=i+1}^n x_j^{-b_{ji}}\right)^{b_{ik}}.
\end{multline}
Then we observe that the right-hand side
of Equation~(\ref{eq12}) is divisible by $x_k$
and that $x_k'$ is a polynomial in 
$x_1,\ldots,x_n,x_1[1],\ldots,x_n[1]$.
Thus we can express every element of
the basis $\cP_{\rm BFZ}$ explicitely as a linear combination of
vectors from the basis $\cP_{\rm GLS}$.

One could use Equation~(\ref{eq12}) to get an alternative
proof of Proposition~\ref{acyclic1} as pointed out by
Zelevinsky \cite{Z}.
Vice versa, using Propostion~\ref{acyclic1} yields
another proof that $\cP_{\rm BFZ}$ is a basis.

As an illustration,
for $n= 3$ the matrices $B_i$ look
as follows:
\begin{align*}
B_0 &=
\left(\bbm
0 &b_{12} & b_{13}  \\
b_{21} & 0 & b_{23}   \\
b_{31} & b_{32} & 0  \\\hline
1 &-b_{12} & -b_{13} &  \\
0 & 1 & -b_{23} &  \\
0 & 0 & 1 
\ebm
\right),
&
B_1 &=
\left(\bbm
0           &-b_{12} & -b_{13}   \\
-b_{21} & 0          & b_{23}   \\
-b_{31} & b_{32} & 0  \\\hline
-1         &0 & 0 \\
0           & 1 & -b_{23}   \\
0           & 0 & 1 
\ebm
\right),\\
B_2 &=
\left(\bbm
0           &b_{12} & -b_{13}   \\
b_{21} & 0          & -b_{23}   \\
-b_{31} & -b_{32} & 0   \\\hline
-1         &0 & 0  \\
-b_{21}           & -1 & 0  \\
0           & 0 & 1 
\ebm
\right),
&
B_3 &=
\left(\bbm
0           &b_{12}   & b_{13}   \\
b_{21}   & 0          & b_{23}  \\
b_{31}   & b_{32}  & 0   \\\hline
-1         &0  & 0  \\
-b_{21}           & -1       & 0  \\
-b_{31}           & -b_{32}         & -1 
\ebm
\right).
\end{align*}
For example, for 
$$
B = B_0 =
\left(\bbm
0           &2   & 0   \\
-2   & 0          & 1  \\
0   & -1  & 0   \\\hline
1         &-2  & 0  \\
0           & 1       & -1  \\
  0        &  0        & 1 
\ebm
\right)
$$
the quivers $\GG(B_0)$ and $\GG(B_3)$ look as follows:
$$
\xymatrix@-2.7pc@!{
\GG(B_0)\colon&& 4 \ar[rr]&& 1 \ar@<0.3ex>[dl]\ar@<-0.3ex>[dl]
&&\GG(B_3)\colon&& 4 && 1\ar[ll] \ar@<0.3ex>[dl]\ar@<-0.3ex>[dl]
\\
&5 \ar[rr]
& & 2 \ar@<0.3ex>[ul]\ar@<-0.3ex>[ul]\ar[dl] &
&&&5 \ar@<0.3ex>[urrr]\ar@<-0.3ex>[urrr]
& & 2\ar[ll] \ar[dl]
\\
6 \ar[rr]&& 3 \ar[ul]&&
&&6 \ar[urrr]&& 3 \ar[ll]
}
$$
The cluster algebra $\cA(\bx,B)$ is a polynomial ring
in the 6 variables
$x_1,x_2,x_3$ and
\begin{align*}
x_1[1] &= \frac{x_2^2 + x_4}{x_1},\\
x_2[1] &= \frac{x_2^4x_3 + 2x_2^2x_3x_4 + x_3x_4^2 + x_1^2x_5}{x_1^2x_2},\\
x_3[1] &= \frac{x_2^4x_3 + 2x_2^2x_3x_4 + x_3x_4^2 + x_1^2x_5 + x_1^2x_2x_6}{x_1^2x_2x_3}.
\end{align*}

\subsection{Cluster algebras arising in Lie theory 
as polynomial rings}\label{lie}
The next class of examples can be seen as a fusion of
the examples discussed in Sections~\ref{linear} and \ref{acyclic}.
In the following we use the same notation as in \cite{GLSKM}.

Let $C \in M_{n,n}(\Z)$ be a symmetric generalized Cartan matrix,
and let
$\g$ be the associated Kac-Moody Lie algebra
over $K = \C$ with triangular decomposition 
$\g = \n_- \oplus \h \oplus \n$, see \cite{K}.

Let $U(\n)_{\rm gr}^*$ be the graded dual of the enveloping
algebra $U(\n)$ of $\n$.
To each element $w$ in the Weyl group $W$ of $\g$ 
one can associate a subalgebra 
$\cR(\cC_w)$ of $U(\n)_{\rm gr}^*$ and a cluster 
algebra $\cA(\cC_w)$, see \cite{GLSKM}.
Here $\cC_w$ denotes a Frobenius category associated to $w$,
see \cite{BIRS,GLSKM}.

In \cite{GLSKM} we constructed a natural algebra isomorphism
$$
\cA(\cC_w) \to \cR(\cC_w).
$$
This yields a cluster algebra structure on $\cR(\cC_w)$.

Let $\bi = (i_r,\ldots,i_1)$ be a reduced expression of $w$.
In \cite{GLSKM} we studied
two cluster-tilting modules $V_\bi = V_1 \oplus \cdots \oplus V_r$ and $T_\bi = T_1 \oplus \cdots \oplus T_r$ in $\cC_w$,
which are associated to $\bi$.
These modules yield two
disjoint clusters 
$(\delta_{V_1},\ldots,\delta_{V_r})$ and
$(\delta_{T_1},\ldots,\delta_{T_r})$ of $\cR(\cC_w)$.
The exchanges matrices are of size $r \times (r-n)$.
In contrast to our conventions in this article, the $n$ 
coefficients are $\delta_{V_k} = \delta_{T_k}$ with $k^+ = r+1$,
where $k^+$ is defined as in \cite{GLSKM}, and none of these
coefficients is invertible.
Furthermore, we studied a module
$M_\bi = M_1 \oplus \cdots \oplus M_r$ in $\cC_w$, which
yields
cluster variables
$\delta_{M_1},\ldots,\delta_{M_r}$ of $\cR(\cC_w)$. 
(These do not form a cluster.)
Using methods from Lie theory we obtained the following
result.

\begin{Thm}[{{\cite[Theorem~3.2]{GLSKM}}}]\label{lie1}
The cluster algebra $\cR(\cC_w)$
is a polynomial ring in the variables
$\delta_{M_1},\ldots,\delta_{M_r}$.
\end{Thm}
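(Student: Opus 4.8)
The plan is to deduce Theorem~\ref{lie1} from our general factoriality criterion Theorem~\ref{Thmfac}, exactly in the spirit of the proofs of Proposition~\ref{linear3} and Proposition~\ref{acyclic1}. The role of the auxiliary factorial subalgebra $U$ will be played by a suitable polynomial ring inside $\cR(\cC_w)$, and the two disjoint clusters will be the ones coming from the cluster-tilting modules $V_\bi$ and $T_\bi$. Concretely, I would first recall from \cite{GLSKM} that $\cA(\cC_w) \cong \cR(\cC_w)$ carries these two clusters $(\delta_{V_1},\ldots,\delta_{V_r})$ and $(\delta_{T_1},\ldots,\delta_{T_r})$, that their $n$ shared coefficients are non-invertible, and — crucially — that these two clusters are disjoint. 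Then the heart of the argument is to exhibit a factorial subalgebra $U$ of $\cR(\cC_w)$ containing all $2r-n$ elements $\delta_{V_1},\ldots,\delta_{V_r},\delta_{T_1},\ldots,\delta_{T_r}$.

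First I would take the Lie-theoretic input: the elements $\delta_{M_1},\ldots,\delta_{M_r}$ are algebraically independent over $\C$, so $U := \C[\delta_{M_1},\ldots,\delta_{M_r}]$ is a polynomial ring in $r$ variables, hence factorial. The second step is to verify the containment hypothesis of Theorem~\ref{Thmfac}, i.e.\ that every $\delta_{V_k}$ and every $\delta_{T_k}$ lies in $U$. For this I would invoke the multiplicativity/triangularity relations between the $\delta$-functions associated to the modules $M_k$, $V_k$, $T_k$ established in \cite{GLSKM}: the cluster-tilting modules $V_\bi$ and $T_\bi$ are built from the $M_k$ by short exact sequences, and applying the dual semicanonical functions $\delta$ turns these sequences into polynomial identities expressing $\delta_{V_k}$ and $\delta_{T_k}$ as polynomials in $\delta_{M_1},\ldots,\delta_{M_r}$ (this is the analogue of Equations~(\ref{eq5}),(\ref{eq6}) and (\ref{eq7}),(\ref{eq11})). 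Once both clusters sit inside $U$ and are disjoint, Theorem~\ref{Thmfac} applies verbatim and yields $U = \cR(\cC_w)$, which is precisely the claim that $\cR(\cC_w)$ is the polynomial ring $\C[\delta_{M_1},\ldots,\delta_{M_r}]$.

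I expect the main obstacle to be the second step: checking that the cluster variables from the $V_\bi$- and $T_\bi$-seeds really are polynomial (not merely Laurent) expressions in the $\delta_{M_k}$, and that the $\delta_{M_k}$ are algebraically independent. Both facts are genuinely Lie-theoretic and rest on the structure theory of \cite{GLSKM} (properties of the modules $M_\bi, V_\bi, T_\bi$ in the Frobenius category $\cC_w$ and the behaviour of $\delta$ under the relevant short exact sequences); they are not formal consequences of the cluster-algebra axioms. If instead one wishes to keep the argument self-contained within the framework of this paper, one can note that the \emph{cluster-algebra} part — disjointness of the two clusters plus non-invertibility of the coefficients plus Theorem~\ref{Thmfac} — reduces the whole theorem to the single assertion ``$\delta_{M_1},\ldots,\delta_{M_r}$ are algebraically independent and generate $\cR(\cC_w)$ as a $\C$-algebra'', which is then imported as Theorem~\ref{lie1} of \cite{GLSKM}. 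In either reading, the cluster-theoretic machinery developed above does the bookkeeping, and the only non-routine ingredient is the representation-theoretic description of the $\delta$-functions.
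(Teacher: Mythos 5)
Your proposal coincides with the paper's own treatment: the theorem is imported from \cite[Theorem~3.2]{GLSKM} (where it is proved by Lie-theoretic methods), and the paper only sketches the same alternative route you describe, namely (i) algebraic independence of $\delta_{M_1},\ldots,\delta_{M_r}$, (ii) expressing each $\delta_{V_k}$ and $\delta_{T_k}$ as a polynomial in the $\delta_{M_j}$, and (iii) applying Theorem~\ref{Thmfac} to the two disjoint clusters coming from $V_\bi$ and $T_\bi$. You also correctly isolate the genuine difficulty exactly where the paper does: step (i) follows by induction along the mutation sequence of \cite[Section~13]{GLSKM}, while step (ii) is, in the authors' words, ``not at all straightforward'' and is the only non-formal ingredient.
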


To obtain an alternative proof of 
Theorem~\ref{lie1}, one can proceed as follows:
\begin{itemize}
\setlength{\itemsep}{9pt}

\item[(i)]
Show that the cluster variables 
$\delta_{M_1},\ldots,\delta_{M_r}$ are algebraically independent.

\item[(ii)]
Show that for $1 \le k \le r$ the cluster variables 
$\delta_{V_k}$ and $\delta_{T_k}$ are polynomials in 
$\delta_{M_1},\ldots,\delta_{M_r}$.

\item[(iii)]
Apply Theorem~\ref{Thmfac}.

\end{itemize}
Part (i) can be done easily using induction and the mutation
sequence in \cite[Section~13]{GLSKM}.
Part (ii) is not at all straightforward. 

Let us give a concrete example illustrating Theorem~\ref{lie1}.
Let $\g$ be the Kac-Moody Lie algebra
associated to the generalized Cartan matrix
$$
C = \left(\bbm 2 & -2\\-2&2\ebm\right),
$$
and let $\bi = (2,1,2,1,2,1,2,1)$.
Then 
$\cA(\cC_w) = \cA(\bx,B_\bi)$, where
$r=n+2 = 8$,
$x_7$ and $x_8$ are the (non-invertible) coefficients, and 
$$
B_\bi =
\left(\bbm
0 & 2 &-1 & & & \\
 -2& 0  & 2& -1& & \\
1 &  -2 &0 &2 &-1 &  \\
 &   1& -2&0 &2 &-1 \\
 &   &1 & -2&0 &2 \\
 &   & & 1& -2& 0\\\hline
 &   & & & 1& -2\\
 & &  & & & 1 \\
\ebm\right).
$$
The principal part $B_\bi^\circ$ of $B_\bi$ 
is skew-symmetric, and the quiver
$\Gamma(B_\bi)$ looks as follows:
$$
\xymatrix{
8\ar[rr]  && 6 \ar@<0.3ex>[dl]\ar@<-0.3ex>[dl]\ar[rr] 
&& 4 \ar@<0.3ex>[dl]\ar@<-0.3ex>[dl]\ar[rr] &&2\ar@<0.3ex>[dl]\ar@<-0.3ex>[dl] \\
& 7 \ar[rr]&& 5 \ar@<0.3ex>[ul]\ar@<-0.3ex>[ul]\ar[rr]&& 3\ar@<0.3ex>[ul]\ar@<-0.3ex>[ul]\ar[rr]& &1 \ar@<0.3ex>[ul]\ar@<-0.3ex>[ul]
}
$$
Define
\begin{align*}
(\bx[0],B[0]) &:= (\bx,B_\bi),\\
(\bx[1],B[1]) &:= \mu_5\mu_3\mu_1(\bx[0],B[0]), &
(\bx[2],B[2]) &:= \mu_6\mu_4\mu_2(\bx[1],B[1]),\\
(\bx[3],B[3]) &:= \mu_3\mu_1(\bx[2],B[2]),&
(\bx[4],B[4]) &:= \mu_4\mu_2(\bx[3],B[3]),\\
(\bx[5],B[5]) &:= \mu_1(\bx[4],B[4]),&
(\bx[6],B[6]) &:= \mu_2(\bx[5],B[5]),
\end{align*}
and for $0 \le k \le 6$ let
$(x_1[k],\ldots,x_8[k]) := \bx[k]$.

Under the isomorphism $\cA(\cC_w) \to \cR(\cC_w)$
the cluster $\bx[0]$ of $\cA(\cC_w) = \cA(\bx,B_\bi)$ corresponds to
the cluster $(\delta_{V_1},\ldots,\delta_{V_8})$ of $\cR(\cC_w)$, the cluster $\bx[6]$ 
corresponds to
$(\delta_{T_1},\ldots,\delta_{T_8})$, and we have
\begin{align*}
x_1[0] &\mapsto \delta_{M_1}, & 
x_2[0] &\mapsto \delta_{M_2}, & 
x_1[2] &\mapsto \delta_{M_3}, & 
x_2[2] &\mapsto \delta_{M_4}, &\\
x_1[4] &\mapsto \delta_{M_5}, &
x_2[4] &\mapsto \delta_{M_6}, &
x_1[6] &\mapsto \delta_{M_7}, &
x_2[6] &\mapsto \delta_{M_8}.
\end{align*}
By Theorem~\ref{lie1} we know that the cluster algebra
$\cA(\bx,B_\bi)$ is a polynomial ring in the variables
$x_1[0],x_2[0],x_1[2],x_2[2],x_1[4],
x_2[4],x_1[6],x_2[6]$.


\section{Applications}\label{Sec7}


\subsection{Prime elements in the dual semicanonical basis}
As in Section~\ref{lie} let $C \in M_{n,n}(\Z)$ be a symmetric generalized Cartan matrix,
and let
$\g = \n_- \oplus \h \oplus \n$
be the associated Lie algebra.

As before let $W$ be the Weyl group of $\g$.
To $C$ one can also associate a preprojective algebra $\LL$
over $\C$, see for example \cite{GLSKM,R}

Lusztig \cite{Lu} realized 
the universal enveloping algebra $U(\n)$ of $\n$
as an algebra of constructible functions 
on
the varieties $\LL_d$
of nilpotent $\LL$-modules with dimension vector $d \in \N^n$.
He also 
constructed the semicanonical basis $\cS$
of $U(\n)$.
The elements of $\cS$ are naturally parametrized by
the irreducible components of the varieties $\LL_d$.

An irreducible component $Z$ of $\LL_d$ is called \emph{indecomposable}
if it contains a Zariski dense subset of indecomposable 
$\LL$-modules, and $Z$ is \emph{rigid} if it
contains a rigid $\LL$-module $M$, i.e.
$M$ is a module with
$\Ext_\LL^1(M,M) = 0$. 

Let $\cS^*$ be the dual semicanonical basis of
the graded dual $U(\n)^*_{\rm gr}$ of $U(\n)$.
The elements $\rho_Z$ 
in $\cS^*$ are also parametrized by
irreducible components $Z$ of the varieties $\LL_d$.
We call $\rho_Z$ \emph{indecomposable} (resp. \emph{rigid})
if $Z$ is indecomposable (resp. rigid).
An element $b \in \cS^*$ is called \emph{primitive} if
it cannot be written as a product $b = b_1b_2$ with
$b_1,b_2 \in \cS^* \setminus \{1\}$.

\begin{Thm}[{{\cite[Theorem~1.1]{GLSSemi1}}}]
\label{Thmprim3}
If $\rho_Z$ is primitive, then $Z$ is indecomposable.
\end{Thm}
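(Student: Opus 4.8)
The plan is to prove the contrapositive: assuming $Z$ is \emph{not} indecomposable I will write $\rho_Z$ as a product $b_1b_2$ with $b_1,b_2\in\cS^*\setminus\{1\}$, so that $\rho_Z$ is not primitive. The first ingredient is the canonical decomposition of irreducible components of module varieties (Crawley-Boevey and Schr\"oer): the component $Z$ of $\LL_d$ can be written as $Z=\overline{Z_1\oplus\cdots\oplus Z_t}$, where each $Z_i$ is an \emph{indecomposable} irreducible component of $\LL_{d_i}$, where $d=d_1+\cdots+d_t$ with all $d_i\neq 0$, and where $\overline{Z_1\oplus\cdots\oplus Z_t}$ denotes the closure of the constructible set of modules isomorphic to $M_1\oplus\cdots\oplus M_t$ with each $M_i$ generic in $Z_i$. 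Since $Z$ is not indecomposable we have $t\ge 2$; grouping the factors (and using that a partial direct sum of the $Z_i$ is again the closure of an irreducible component, which is part of the Crawley-Boevey--Schr\"oer theory and, for the preprojective algebra $\LL$, rests on the generic vanishing of $\Ext^1$ between distinct $Z_i$) we may write $Z=\overline{Z'\oplus Z''}$ with $Z'\subseteq\LL_{d'}$, $Z''\subseteq\LL_{d''}$ irreducible components and $d',d''\neq 0$.

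The second and essential ingredient is the \emph{multiplicativity} of the dual semicanonical basis: whenever $Z=\overline{Z'\oplus Z''}$ for irreducible components as above, one has $\rho_Z=\rho_{Z'}\cdot\rho_{Z''}$ in $U(\n)^*_{\rm gr}$. I would derive this from Lusztig's constructible-function realization of $U(\n)$ and its graded dual: there are evaluation functionals $\delta_M$, $M\in\LL_d$, and two facts combine. First, $\delta_{M\oplus N}=\delta_M\cdot\delta_N$; this is formal, being dual to the comultiplication of $U(\n)$ being an algebra homomorphism (equivalently, it reflects the behaviour of the relevant flag varieties under direct sums). Second, for $M$ generic in an irreducible component $Y$ the expansion of $\delta_M$ in the basis $\cS^*$ is governed by a degeneration order on components, with $\rho_Y$ as leading term. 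Taking $M=M'\oplus M''$ with $M'$, $M''$ generic in $Z'$, $Z''$ -- which is then generic in $Z$ by the canonical decomposition -- and comparing leading terms on the two sides of $\delta_{M'}\cdot\delta_{M''}=\delta_{M'\oplus M''}$ yields $\rho_{Z'}\cdot\rho_{Z''}=\rho_Z$ plus terms supported on components strictly below $Z$; these must be shown to vanish, which comes down to a dimension count in $\LL_{d'+d''}$ -- roughly, that direct sums of generic modules already form a dense subset of $Z$ of the expected dimension, via the formula relating $\dim Y$, $\dim\orb_M$ and $\dim\Ext^1_\LL(M,M)$ for the preprojective algebra. Granting this, the proof is finished: $\rho_{Z'}$ and $\rho_{Z''}$ lie in $\cS^*$ and are homogeneous of positive degrees $d'$ and $d''$, hence different from $1=\rho_{\LL_0}$, so $\rho_Z=\rho_{Z'}\rho_{Z''}$ is a genuine product and $\rho_Z$ is not primitive.

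The hard part is the exact identity $\rho_Z=\rho_{Z'}\cdot\rho_{Z''}$, and inside it the vanishing of the lower-order terms in $\delta_{M'}\cdot\delta_{M''}$: the soft inputs ($\delta_{M\oplus N}=\delta_M\delta_N$ and the leading-term description of $\delta_M$) do not by themselves forbid another irreducible component of $\LL_{d'+d''}$ from generically containing modules $M'\oplus M''$ with $M'$, $M''$ generic, and excluding this is a genuinely geometric statement about $\LL$ -- it uses the $\Ext^1$-dimension formula together with the properties of the Crawley-Boevey--Schr\"oer canonical decomposition. One should also carry out the bookkeeping that ``generic in $Z$'' is the correct genericity notion here and that $\overline{Z'\oplus Z''}$ really is an irreducible component, which in our situation is automatic since it equals $Z$.
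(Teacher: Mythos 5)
First, a point of reference: the paper does not prove this statement at all --- Theorem~\ref{Thmprim3} is imported from \cite[Theorem~1.1]{GLSSemi1}, so there is no internal proof to compare against. Your strategy is, however, exactly the strategy of that reference: pass to the contrapositive, apply the Crawley-Boevey--Schr\"oer canonical decomposition $Z=\overline{Z_1\oplus\cdots\oplus Z_t}$ into indecomposable components (partial direct sums being again components because $\Ext^1$ vanishes generically between distinct $Z_i$), and invoke the multiplicativity $\rho_{\overline{Z'\oplus Z''}}=\rho_{Z'}\rho_{Z''}$ of the dual semicanonical basis; the homogeneity argument showing $\rho_{Z'},\rho_{Z''}\neq 1$ is also standard. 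So the skeleton is right.

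The gap is inside your treatment of the multiplicativity, and you have located the difficulty in the wrong place. The identity $\delta_{M\oplus N}=\delta_M\delta_N$ is \emph{not} formal: the fact that the comultiplication $\Delta$ of $U(\n)$ is an algebra homomorphism only gives $\delta_M\delta_N=(\delta_M\otimes\delta_N)\circ\Delta$; to conclude you need the geometric identity $(\Delta f)(M,N)=f(M\oplus N)$ for Lusztig's geometrically defined $\Delta$, and that identity is precisely the hard theorem of \cite{GLSSemi1}, proved there by a torus-action/Euler-characteristic analysis of varieties of composition series of a direct sum. Conversely, the step you present as the essential difficulty --- killing lower-order terms in the $\cS^*$-expansion of $\delta_M$ --- does not arise: by Lusztig's characterization of the semicanonical basis, the generic value of $f_{Z'}$ on $Z$ is $1$ if $Z'=Z$ and $0$ otherwise, so $\delta_M=\rho_Z$ \emph{exactly}, with no correction terms, for $M$ generic in $Z$; one need only choose $M'$, $M''$ generic so that $M'\oplus M''$ is generic in $Z$, which is possible since the locus of such direct sums is constructible and dense in $Z$. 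As written, your argument declares the genuinely hard step trivial and spends its effort on a non-issue; a complete proof must either reproduce or explicitly cite the direct-sum formula for $\Delta$ from \cite{GLSSemi1}.
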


\begin{Thm}[{{\cite[Theorem~3.1]{GLSKM}}}]\label{Thmprim1}
For $w \in W$
all cluster monomials of the cluster algebra 
$\cR(\cC_w)$ belong to the dual semicanonical
basis $\cS^*$ of $U(\n)_{\rm gr}^*$. 
More precisely, we have
\begin{align*}
\{\text{cluster variables of 
}  \cR(\cC_w) \} &\subseteq \{ \rho_Z \in \cS^* \mid Z \text{ is indecomposable and rigid} \},\\
\{\text{cluster monomials of 
}  \cR(\cC_w) \} &\subseteq \{ \rho_Z \in \cS^* \mid Z \text{ is rigid} \}.
\end{align*}
\end{Thm}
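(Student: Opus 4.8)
The plan is to transport the cluster combinatorics of $\cA(\cC_w)$ into $U(\n)_{\rm gr}^*$ through the categorification by the Frobenius category $\cC_w$, matching each mutation with a multiplication formula for $\delta$-functions. Recall that under the isomorphism $\cA(\cC_w) \xrightarrow{\sim} \cR(\cC_w)$ the initial cluster is $(\delta_{V_1},\ldots,\delta_{V_r})$, where $V_\bi = V_1 \oplus \cdots \oplus V_r \in \cC_w \subseteq \nil(\LL)$ is the cluster-tilting module attached to a reduced word $\bi$ for $w$ and each $V_k$ is indecomposable rigid. Two facts about rigid modules are the foundation: (a) a rigid $M \in \nil(\LL)$ of dimension vector $d$ has a dense orbit in a unique irreducible component $Z_M = \ov{\orb_M}$ of $\LL_d$; this $Z_M$ is rigid, it is indecomposable precisely when $M$ is, and $\delta_M = \rho_{Z_M} \in \cS^*$. (b) If $M, N$ are rigid with $\Ext^1_\LL(M,N) = \Ext^1_\LL(N,M) = 0$, then $M \oplus N$ is rigid and $\delta_M \cdot \delta_N = \delta_{M \oplus N}$. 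Fact (b) disposes of the cluster-monomial statement once one knows that every cluster of $\cR(\cC_w)$ has the form $(\delta_{T_1},\ldots,\delta_{T_r})$ for a cluster-tilting $T = T_1 \oplus \cdots \oplus T_r$ reachable from $V_\bi$: the summands $T_i$ are pairwise $\Ext^1_\LL$-orthogonal, so $\bigoplus_i T_i^{a_i}$ is rigid and any cluster monomial equals $\delta_{\bigoplus_i T_i^{a_i}} = \rho_Z$ with $Z$ rigid. So the substantive task is to show that every cluster variable is $\delta_M$ for some indecomposable rigid $M$.

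First I would prove this by induction on the number of mutations needed to reach a given seed from the initial one, using that $\cC_w$ carries a cluster structure in the sense of \cite{BIRS}: mutating a cluster-tilting $T = T_1 \oplus \cdots \oplus T_r$ at an indecomposable non-projective summand $T_k$ produces $T' = (T/T_k) \oplus T_k'$ with $T_k'$ indecomposable rigid, together with exchange sequences $0 \to T_k \to B \to T_k' \to 0$ and $0 \to T_k' \to B' \to T_k \to 0$ in which $B, B' \in \add(T/T_k)$ contain $T_i$ with multiplicities $[b_{ik}]_+$ and $[-b_{ik}]_+$ respectively, $b_{ik}$ being the entries of the exchange matrix of $\cA(\cC_w)$. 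Since $\dim_\C \Ext^1_\LL(T_k,T_k') = 1$ for such an exchange pair, the multiplication formula for the dual semicanonical basis (recalled in \cite{GLSKM}) gives $\delta_{T_k} \cdot \delta_{T_k'} = \delta_B + \delta_{B'}$; combining this with fact (b) yields $\delta_{T_k} \cdot \delta_{T_k'} = \prod_{i \neq k} \delta_{T_i}^{[b_{ik}]_+} + \prod_{i \neq k} \delta_{T_i}^{[-b_{ik}]_+}$, which is exactly the exchange relation produced by mutating the cluster $(\delta_{T_1},\ldots,\delta_{T_r})$ at position $k$. Hence the new cluster variable equals $\delta_{T_k'}$ with $T_k'$ indecomposable rigid, so it is $\rho_Z$ with $Z$ indecomposable and rigid; this simultaneously shows that the mutated cluster is again of the categorical form required by fact (b). The base case is fact (a) applied to $V_1,\ldots,V_r$.

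The hard part will be the two ingredients just invoked: (i) the multiplication formula $\delta_{T_k}\delta_{T_k'} = \delta_B + \delta_{B'}$, and (ii) the identification of the combinatorial exchange matrix of $\cA(\cC_w)$ with the datum of the exchange sequences in $\cC_w$. Formula (i) is proved geometrically, by stratifying the incidence variety of submodules relevant to the convolution product of constructible functions on the $\LL_d$, counting dimensions to see that the dominant strata are $\ov{\orb_B}$ and $\ov{\orb_{B'}}$, and checking that both leading coefficients are nonzero so that neither term collapses; this step leans on Lusztig's construction of $\cS$ and on $\Ext^1$-vanishing forcing dense orbits. Ingredient (ii) is precisely the statement that $\cC_w$ admits a cluster structure, where the explicit combinatorics of $V_\bi$ and $T_\bi$ from \cite{GLSKM} together with the $2$-Calabi--Yau techniques of \cite{BIRS} do the work. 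Everything else --- facts (a) and (b), and the inductive bookkeeping --- is routine given standard properties of rigid modules over preprojective algebras.
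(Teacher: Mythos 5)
This theorem is not proved in the paper at all: it is imported verbatim from \cite[Theorem~3.1]{GLSKM} and used as a black box, so there is no proof here to compare your attempt against. That said, your sketch is a faithful reconstruction of the argument actually given in the cited reference and its prerequisites: the two facts you label (a) and (b) (open orbits of rigid modules give rigid irreducible components with $\delta_M=\rho_{Z_M}$, and $\delta_M\delta_N=\delta_{M\oplus N}$ when $\Ext^1_\LL(M,N)=0$) are the main theorems of \cite{GLSSemi1} and its sequel, the multiplication formula $\delta_{T_k}\delta_{T_k'}=\delta_B+\delta_{B'}$ in the case $\dim\Ext^1_\LL(T_k,T_k')=1$ is the key geometric input, and the identification of categorical mutation of cluster-tilting objects in $\cC_w$ with matrix mutation comes from \cite{BIRS} and \cite{GLSKM}. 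Your induction on the mutation distance from the initial cluster-tilting module $V_\bi$ is exactly how the reachable seeds are matched with $\delta$-functions of rigid modules in \cite{GLSKM}. The only caveat is that you have correctly isolated, rather than discharged, the two genuinely hard ingredients (the multiplication formula and the cluster structure on $\cC_w$); as a standalone proof it is therefore a correct roadmap resting on those external results, which is also the logical status the theorem has in the present paper.
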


Combining Theorems~\ref{Thmirr}, \ref{lie1}
and \ref{Thmprim1} 
we obtain a partial converse of Theorem~\ref{Thmprim3}.

\begin{Thm}\label{Thmprim2}
The cluster variables in $\cR(\cC_w)$ are prime,
and
they are primitive elements of $\cS^*$.
\end{Thm}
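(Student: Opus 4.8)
The plan is to combine the three results cited just before the statement. By Theorem~\ref{lie1}, the cluster algebra $\cR(\cC_w) \cong \cA(\cC_w)$ is a polynomial ring in the variables $\delta_{M_1},\ldots,\delta_{M_r}$; in particular it is factorial. Now $\cR(\cC_w)$ is a cluster algebra $\cA(\bx,B_\bi)$ (with the index conventions of \cite{GLSKM}), so Theorem~\ref{Thmirr} tells us that every cluster variable of $\cR(\cC_w)$ is irreducible. But in a factorial ring every irreducible element is prime. Hence each cluster variable, and in particular each $\delta_{M_k}$, is a prime element of $\cR(\cC_w)$. This already gives the first assertion.

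For the second assertion I would argue that primality in the ring $\cR(\cC_w)$ forces primitivity in $\cS^*$. Suppose $\delta_{M_k} = b_1 b_2$ with $b_1,b_2 \in \cS^* \setminus \{1\}$. Since $\delta_{M_k}$ is a cluster variable, by Theorem~\ref{Thmprim1} it equals $\rho_Z$ for some indecomposable rigid irreducible component $Z$; by Theorem~\ref{Thmirr} it is irreducible in $\cR(\cC_w)$. The elements $b_1,b_2$ of $\cS^*$ lie in $U(\n)^*_{\rm gr}$, but the key point is that $\cR(\cC_w)$ is the subalgebra of $U(\n)^*_{\rm gr}$ spanned by those $\rho_Z$ with $Z$ rigid for components of modules in $\cC_w$, and a factorization of $\rho_Z$ inside $\cS^*$ (which is a multiplicative basis in the relevant sense, the structure constants being non-negative integers) produces a factorization inside $\cR(\cC_w)$ into non-units: indeed $b_1,b_2$ are again of the form $\rho_{Z_1},\rho_{Z_2}$ and lie in $\cR(\cC_w)$, and they are non-invertible there by Theorem~\ref{Thminvert} since they are not scalar multiples of coefficient monomials. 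This contradicts irreducibility of $\delta_{M_k}$ in $\cR(\cC_w)$. Therefore $\delta_{M_k}$ is primitive in $\cS^*$.

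The main obstacle is the passage between ``primitive in $\cS^*$'' and ``irreducible in $\cR(\cC_w)$'': one must be careful that a product $b_1 b_2$ of dual semicanonical basis elements that happens to equal a cluster variable does lie in the cluster subalgebra $\cR(\cC_w)$ and does decompose there into genuine non-units, rather than, say, one factor being a unit in $\cR(\cC_w)$ but not equal to $1$ in $\cS^*$. This is where one uses that the coefficients of $\cR(\cC_w)$ are non-invertible (stated in Section~\ref{lie}), so that $\cR(\cC_w)^\times = \C^\times$ by Theorem~\ref{Thminvert}, while the dual semicanonical basis elements $\rho_Z$ are not scalars for $Z \ne 0$. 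I would also need the multiplicativity property of $\cS^*$ — that a product of two dual semicanonical basis elements expands with non-negative coefficients and, more to the point here, that the specific product under consideration is again supported on $\cS^*$-elements indexed by components of $\cC_w$-modules — which follows from the setup in \cite{GLSKM,GLSSemi1}. Modulo that bookkeeping, the proof is a short three-line deduction: factorial $\Rightarrow$ irreducibles are prime, cluster variables are irreducible, and irreducibility in the ring transfers to primitivity in the basis.
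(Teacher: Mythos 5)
Your argument is correct and is essentially the paper's own: the paper offers no written proof beyond the phrase ``combining Theorems~\ref{Thmirr}, \ref{lie1} and \ref{Thmprim1}'', and your deduction (Theorem~\ref{lie1} gives a polynomial ring, hence factorial, where Theorem~\ref{Thmirr}'s irreducible cluster variables are automatically prime) is exactly the intended one for primality. For primitivity you correctly isolate the only real subtlety --- forcing the factors $b_1,b_2$ into $\cR(\cC_w)$ as non-units --- and the fact that closes it is precisely the multiplicativity $\rho_{Z_1}\rho_{Z_2}=\rho_{\overline{Z_1\oplus Z_2}}$ from \cite[Theorem~1.1]{GLSSemi1} together with the closure of $\cC_w$ under direct summands; note that mere non-negativity of structure constants (which you invoke) would not by itself suffice, since one needs the product of two basis vectors to be a single basis vector indexed by a component whose generic module splits off summands of the rigid module attached to the cluster variable.
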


\begin{Conj}\label{primitive}
If $\rho_Z \in \cS^*$ is indecomposable and rigid, then $\rho_Z$ is prime in $U(\n)_{\rm gr}^*$.
\end{Conj}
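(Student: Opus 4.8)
The plan is to reduce Conjecture~\ref{primitive} to the primality already obtained in Theorem~\ref{Thmprim2}, and then to upgrade it from the subalgebras $\cR(\cC_w)$ to the ambient algebra $U(\n)_{\rm gr}^*$. The combination of ingredients I would aim for is: (a) a \emph{realization} step, showing that every indecomposable rigid $\rho_Z$ is a cluster variable of $\cR(\cC_w)$ for some $w \in W$; (b) the primality of that cluster variable \emph{inside} $\cR(\cC_w)$, which is exactly Theorem~\ref{Thmprim2}; and (c) an \emph{ascent} of primality from $\cR(\cC_w)$ to $U(\n)_{\rm gr}^*$. Steps (a) and (b) are within reach of existing machinery; step (c) is where the real difficulty lies.

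For the realization step, suppose $\rho_Z$ is indecomposable and rigid, so a generic point $M \in Z$ is an indecomposable rigid $\LL$-module, $\Ext_\LL^1(M,M)=0$. First I would invoke the mutation theory of (maximal) rigid modules over the preprojective algebra to embed $M$ as an indecomposable direct summand of a cluster-tilting object $T$ of a subcategory $\cC_w$, choosing a reduced word $\bi$ for $w$ so that $T$ is mutation-equivalent to the standard cluster-tilting module $V_\bi$. Theorem~\ref{Thmprim1} then identifies $\rho_Z = \delta_M$ as a cluster variable of $\cR(\cC_w)$, and Theorem~\ref{Thmprim2} gives that $\rho_Z$ is prime in $\cR(\cC_w)$. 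The nontrivial point buried here is \emph{reachability}: that every indecomposable rigid module really does arise as such a summand. I would establish this by the known connectedness of the mutation graph of rigid $\LL$-modules and the exhaustion of $\cS^*\cap\{\text{indecomposable rigid}\}$ by cluster variables as $w$ ranges over $W$.

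The hard part, and the main obstacle, is the ascent in step (c): an element prime in a subalgebra need not remain prime in a larger one, and $\rho_Z$ is asserted prime in all of $U(\n)_{\rm gr}^*$, not merely in $\cR(\cC_w)$. I see two routes. The first is to exhibit $U(\n)_{\rm gr}^*$ as a filtered colimit of the $\cR(\cC_w)$ along the inclusions indexed by the weak order on $W$, and to show that the transition maps are flat enough (a localization-type behaviour) that a prime element of one $\cR(\cC_w)$ stays prime in the colimit. The second is a direct argument via the Geiss--Leclerc--Schr\"oer multiplication formula for $\cS^*$, which is multiplicative on rigid direct sums, $\rho_{M \oplus N} = \rho_M \rho_N$ whenever $\Ext_\LL^1(M,N)=0$, and triangular with respect to the $\N^n$-grading by dimension vector; from this one would try to show that $U(\n)_{\rm gr}^*/(\rho_Z)$ is an integral domain. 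I expect route one to be delicate precisely in the Kac--Moody case: no single $\cR(\cC_w)$ equals $U(\n)_{\rm gr}^*$, the union $\bigcup_w \cR(\cC_w)$ may be a proper subalgebra, and verifying both the colimit description and the flatness of the transition maps is the crux.

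As a sanity check and to isolate the two difficulties cleanly, I would first treat finite type: there one may take $w=w_0$, where $\cR(\cC_{w_0}) = U(\n)_{\rm gr}^*$ as algebras, so Conjecture~\ref{primitive} follows at once from Theorem~\ref{Thmprim2} as soon as reachability of $Z$ is known, and the colimit issue of step (c) disappears. For the general direct argument I would set up a leading-term analysis in $\cS^*$: writing a putative factor as $a = \sum_Z c_Z\, \rho_Z$, I would track the leading term of $\rho_Z \cdot a$ under a degeneration order on irreducible components and attempt a Gauss-lemma-type deduction that $\rho_Z \mid ab$ forces $\rho_Z$ into the support of $a$ or of $b$. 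The obstruction to this is that the structure constants of $\cS^*$, given by Euler characteristics of extension and composition-series varieties, contribute genuinely nontrivial lower-order terms, so the control of leading terms required to close the argument is the step I expect to resist a routine treatment.
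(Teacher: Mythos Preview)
The statement you are trying to prove is labeled \texttt{Conj} in the paper: it is Conjecture~\ref{primitive}, and the paper offers \emph{no proof} of it. It is stated immediately after Theorem~\ref{Thmprim2} as an open problem, a conjectural strengthening of the partial converse just obtained. So there is no ``paper's own proof'' to compare against; you are attempting to resolve an open conjecture.

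Your outline is honest about its own gaps, and those gaps are real. In step~(a), the reachability claim---that every indecomposable rigid $\LL$-module occurs as a direct summand of a cluster-tilting object of some $\cC_w$ reachable by mutation from $V_\bi$---is not established in the literature in this generality. The ``connectedness of the mutation graph of rigid $\LL$-modules'' you invoke is itself an open-type question for general Kac--Moody $\g$; Theorem~\ref{Thmprim1} gives one inclusion (cluster variables are indecomposable rigid $\rho_Z$'s), but the reverse inclusion is precisely what you would need and is not known. In step~(c), you correctly identify that primality need not ascend along ring extensions, and neither of your two routes is close to a proof: the colimit/flatness description of $U(\n)_{\rm gr}^*$ over the $\cR(\cC_w)$ is not available (and, as you note, $\bigcup_w \cR(\cC_w)$ may well be proper), and the leading-term/Gauss-lemma argument founders exactly where you say it does, on the lower-order terms in the $\cS^*$ structure constants.

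In short: your proposal is a reasonable research plan toward the conjecture, with the key obstructions clearly flagged, but it is not a proof, and the paper does not claim one either.
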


\subsection{Monoidal categorifications of cluster algebras}
Let $\cC$ be an abelian tensor category with unit object
$I_\cC$.
We assume that $\cC$ is a Krull-Schmidt category, 
and that all objects in $\cC$ are of finite length.
Let $\cM(\cC) := \cK_0(\cC)$ be the 
Grothendieck ring of $\cC$.
The class of an object $M \in \cC$ is denoted by $[M]$.
The addition in $\cM(\cC)$ is given by
$[M] + [N] := [M \oplus N]$
and
the multiplication is defined by 
$
[M][N] := [M \otimes N].
$
We assume that $[M \otimes N] = [N \otimes M]$.
(In general this does not imply $M \otimes N \cong N \otimes M$.)
Thus $\cM(\cC)$ is a commutative ring.

Tensoring with $K$ over $\Z$ yields a $K$-algebra 
$\cM_K(\cC) := K \otimes_\Z \cK_0(\cC)$ with $K$-basis the classes
of simple objects in $\cC$.
Note that the unit object $I_\cC$ is simple.

A \emph{monoidal categorification} of a cluster algebra
$\cA(\bx,B)$ is an algebra isomorphism
$$
\Phi\colon \cA(\bx,B) \to \cM_K(\cC),
$$
where $\cC$ is a tensor category as above,
such that 
each cluster monomial $y = y_1^{a_1} \cdots y_m^{a_m}$
of $\cA(\bx,B)$ is mapped to a class $[S_y]$
of some simple object $S_y \in \cC$.
In particular, we have
$$
[S_y] = [S_{y_1}]^{a_1} \cdots [S_{y_m}]^{a_m} =
[S_{y_1}^{\otimes a_1} \otimes \cdots \otimes S_{y_m}^{\otimes a_m}].
$$
For an object $M \in \cC$ let $x_M$ be the element
in $\cA(\bx,B)$ with $\Phi(x_M) = [M]$.
 
The concept of a monoidal categorification of a cluster
algebra was introduced in \cite[Definition~2.1]{HL}.
But note that
our definition uses weaker conditions than in \cite{HL}.

An object $M \in \cC$ is called \emph{invertible} if
$[M]$ is invertible in $\cM_K(\cC)$.
An object $M \in \cC$ is \emph{primitive} if there are no
non-invertible objects $M_1$ and $M_2$ in $\cC$ with
$M \cong M_1 \otimes M_2$.

\begin{Prop}\label{monoid1}
Let $\Phi\colon \cA(\bx,B) \to \cM_K(\cC)$
be a monoidal categorification of a cluster 
algebra $\cA(\bx,B)$.
Then the following hold:
\begin{itemize}
\setlength{\itemsep}{9pt}

\item[(i)]
The invertible elements in $\cM_K(\cC)$ are
$$
\cM_K(\cC)^\times = 
\left\{ \la [I_\cC][S_{x_{n+1}}]^{a_{n+1}} \cdots [S_{x_p}]^{a_p} \mid \la \in K^\times, a_i \in \Z\right\}.
$$

\item[(ii)]
Let $M$ be an object in $\cC$ 
such that
the element $x_M$ is irreducible in $\cA(\bx,B)$.
Then $M$ is primitive.

\end{itemize}
\end{Prop}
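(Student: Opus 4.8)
The plan is to transport the ring-theoretic statements of Theorem~\ref{Thminvert} and Theorem~\ref{Thmirr} through the algebra isomorphism $\Phi$ and then reinterpret them categorically. Since $\Phi\colon \cA(\bx,B) \to \cM_K(\cC)$ is an isomorphism of $K$-algebras, it carries units to units and irreducible elements to irreducible elements, and it carries the generators $x_i$ to the classes $[S_{x_i}]$ for the coefficient indices $n+1 \le i \le m$ (these being cluster monomials). So part~(i) should follow essentially by applying $\Phi$ to the description
$$
\cA(\bx,B)^\times = \left\{ \la x_{n+1}^{a_{n+1}} \cdots x_p^{a_p} \mid \la \in K^\times, a_i \in \Z \right\}
$$
from Theorem~\ref{Thminvert}, together with the observation that $\Phi(1_{\cA}) = [I_\cC]$ since $\Phi$ is a unital ring homomorphism and $I_\cC$ is the unit object of the tensor category. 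Each $\Phi(x_i) = [S_{x_i}]$ for $n+1 \le i \le p$, and these are invertible in $\cM_K(\cC)$ because $x_i$ is invertible in $\cA(\bx,B)$; conversely any unit of $\cM_K(\cC)$ pulls back to a unit of $\cA(\bx,B)$ and hence is of the stated form.

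For part~(ii), I would argue by contraposition: suppose $M$ is not primitive, so $M \cong M_1 \otimes M_2$ with both $M_i$ non-invertible in $\cC$ in the sense that $[M_i] \notin \cM_K(\cC)^\times$. Then in the Grothendieck ring $[M] = [M_1][M_2]$, hence $x_M = x_{M_1} x_{M_2}$ in $\cA(\bx,B)$, where $x_{M_i} := \Phi^{-1}([M_i])$. Since $[M_i]$ is not a unit of $\cM_K(\cC)$, the element $x_{M_i}$ is not a unit of $\cA(\bx,B)$. This exhibits $x_M$ as a product of two non-units, contradicting the hypothesis that $x_M$ is irreducible. Hence $M$ must be primitive.

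The only real subtlety — and the step I would be most careful about — is matching up the two notions of "non-invertible". The definition of a primitive object refers to non-invertible \emph{objects} $M_1, M_2$, and an object $M_i$ is by definition invertible exactly when $[M_i]$ is a unit of $\cM_K(\cC)$; so "$M_i$ non-invertible" translates precisely to "$[M_i] \notin \cM_K(\cC)^\times$", which via the isomorphism $\Phi$ is the same as "$x_{M_i} \notin \cA(\bx,B)^\times$". Once this dictionary is in place the argument is a direct transport, and no genuine computation is needed beyond invoking Theorems~\ref{Thminvert} and \ref{Thmirr}. I would also note in passing that $I_\cC$ being simple (stated in the text) is what guarantees $[I_\cC]$ really is the multiplicative identity recorded in the formula in~(i), so the scalar $\la$ and the factor $[I_\cC]$ together account exactly for $K^\times \cdot 1_{\cM_K(\cC)}$.
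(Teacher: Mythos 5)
Your proposal is correct and follows essentially the same route as the paper: part (i) is obtained by transporting the description of $\cA(\bx,B)^\times$ from Theorem~\ref{Thminvert} through the isomorphism $\Phi$, and part (ii) is the same contrapositive argument, writing $[M]=[M_1][M_2]$ and pulling back to a factorization $x_M = x_{M_1}x_{M_2}$ into non-units, contradicting irreducibility. Your added remarks on matching the two notions of non-invertibility and on $\Phi(1_{\cA})=[I_\cC]$ are exactly the implicit bookkeeping in the paper's shorter write-up.
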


\begin{proof}
Part (i) follows directly from Theorem~\ref{Thminvert}.
To prove (ii),
assume that $M$ is not primitive.
Thus  there are non-invertible objects 
$M_1$ and $M_2$ in $\cC$  with
$M \cong M_1 \otimes M_2$.
Thus in $\cM_K(\cC)$ we have $[M] = [M_1] [M_2]$.
Since $\Phi$ is an algebra isomorphism, we get
$x_M = x_{M_1}x_{M_2}$ with $x_{M_1}$ and $x_{M_2}$
non-invertible in $\cA(\bx,B)$.
Since $x_M$ is
irreducible, we have a contradiction.
\end{proof}

Combining Proposition~\ref{monoid1}
with Theorem~\ref{Thmirr} we get the following
result.

\begin{Cor}\label{monoid2}
Let $\Phi\colon \cA(\bx,B) \to \cM_K(\cC)$
be a monoidal categorification of a cluster 
algebra $\cA(\bx,B)$.
For each cluster variable $y$ of $\cA(\bx,B)$, the
simple object $S_y$ is primitive.
\end{Cor}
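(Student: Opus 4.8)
The final statement to prove is Corollary~\ref{monoid2}: under a monoidal categorification $\Phi\colon \cA(\bx,B) \to \cM_K(\cC)$, the simple object $S_y$ attached to any cluster variable $y$ is primitive. The plan is to combine Theorem~\ref{Thmirr} with Proposition~\ref{monoid1}(ii), which is essentially immediate once the objects are identified correctly. First I would recall that, by the definition of a monoidal categorification, each cluster variable $y$ of $\cA(\bx,B)$ (being in particular a cluster monomial with a single exponent equal to $1$) is sent by $\Phi$ to the class $[S_y]$ of a simple object $S_y \in \cC$. Hence, in the notation of Proposition~\ref{monoid1}, we have $x_{S_y} = y$, i.e. $S_y$ is an object whose associated element of $\cA(\bx,B)$ is precisely the cluster variable $y$.

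Next I would invoke Theorem~\ref{Thmirr}, which tells us that every cluster variable in $\cA(\bx,B)$ is irreducible. Applying this to $y = x_{S_y}$, the element $x_{S_y}$ is irreducible in $\cA(\bx,B)$. This is exactly the hypothesis of Proposition~\ref{monoid1}(ii) with $M := S_y$, so that Proposition~\ref{monoid1}(ii) applies and yields that $S_y$ is primitive. This completes the argument.

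The proof therefore has no real obstacle: the only thing to be careful about is the bookkeeping that a cluster variable is a legitimate instance of a cluster monomial, so that the defining property of a monoidal categorification indeed forces $S_y$ to be a simple object with $\Phi(x_{S_y}) = [S_y]$; everything else is a direct citation of Theorem~\ref{Thmirr} and Proposition~\ref{monoid1}(ii). I would write the proof in one or two sentences, essentially: "By Theorem~\ref{Thmirr} the cluster variable $y$ is irreducible in $\cA(\bx,B)$. Since $\Phi$ is a monoidal categorification, $y = x_{S_y}$ for the simple object $S_y$, so Proposition~\ref{monoid1}(ii) shows that $S_y$ is primitive."
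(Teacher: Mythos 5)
Your proposal is correct and is exactly the argument the paper intends: Theorem~\ref{Thmirr} gives irreducibility of the cluster variable $y = x_{S_y}$, and Proposition~\ref{monoid1}(ii) then yields that $S_y$ is primitive. The paper itself states the corollary as an immediate combination of these two results, so your proof matches it.
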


Examples of monoidal categorifications of
cluster algebras can be found in
\cite{HL,N1}, see also \cite{Le}.


\bigskip
{\parindent0cm \bf Acknowledgements.}\,
We thank Giovanni Cerulli Irelli,
Sergey Fomin, 
Daniel Labardini Fragoso, Philipp Lampe and Andrei Zelevinsky for
helpful discussions.
We thank Giovanni Cerulli Irelli for carefully reading 
several preliminary versions of this article.
The first author likes to thank the 
Max-Planck Institute for Mathematics in Bonn for a one year research stay in 2010/2011.
The third author
thanks the Sonderforschungsbereich/Transregio SFB 45 for financial 
support, and all three authors thank the Hausdorff Institute for Mathematics
in Bonn for support and hospitality.



\begin{thebibliography}{999}


\bibitem[BFZ]{BFZ}
A. Berenstein, S. Fomin, A. Zelevinsky,
\emph{Cluster algebras III: Upper bounds and double Bruhat
cells},
Duke Math. J. 126 (2005), no. 1, 1--52.

\bibitem[BIRS]{BIRS}
A. Buan, O. Iyama, I. Reiten, J. Scott,
\emph{Cluster structures for $2$-Calabi-Yau categories and unipotent groups}, 
Compos. Math. 145 (2009), no. 4, 1035--1079.

\bibitem[C]{C}
P.M. Cohn,
\emph{Unique factorization domains}, Amer. Math. Monthly 80 (1973), 1--18.

\bibitem[DWZ]{DWZ}
H. Derksen, J. Weyman, A. Zelevinsky,
\emph{Quivers with potentials and their representations II: applications to cluster algebras}, 
J. Amer. Math. Soc. 23 (2010), no. 3, 749--790.

\bibitem[DK]{DK}
P. Di Francesco, R. Kedem,
\emph{$Q$-systems as cluster algebras. II. Cartan matrix of finite type and the polynomial property}, 
Lett. Math. Phys. 89 (2009), no. 3, 183--216.

\bibitem[FZ1]{FZ1}
S. Fomin, A. Zelevinsky, 
\emph{Cluster algebras. I. Foundations}, 
J. Amer. Math. Soc. 15 (2002), no. 2, 497--529.

\bibitem[FZ2]{FZ2}
S. Fomin, A. Zelevinsky, 
\emph{Cluster algebras. II. Finite type classification},
Invent. Math. 154 (2003), no. 1, 63--121. 

\bibitem[FZ3]{FZ3}
S. Fomin, A. Zelevinsky, 
\emph{Cluster algebras: notes for the CDM-03 conference}, Current developments in mathematics, 2003, 1--34, Int. Press, Somerville, MA, 2003. 

\bibitem[GLS1]{GLSSemi1}
C. Gei{\ss}, B. Leclerc, J. Schr\"oer,
\emph{Semicanonical bases and preprojective algebras},
Ann. Sci. {\'E}cole Norm. Sup. (4)  38  (2005),  no. 2, 193--253.

\bibitem[GLS2]{GLSKM}
C. Gei{\ss}, B. Leclerc, J. Schr\"oer,
\emph{Kac-Moody groups and cluster algebras},
Adv. Math. 228 (2011), 329--433.

\bibitem[HL]{HL}
D. Hernandez, B. Leclerc,
\emph{Cluster algebras and quantum affine algebras}, 
Duke Math. J. 154 (2010), no. 2, 265--341. 

\bibitem[K]{K}
V. Kac, 
\emph{Infinite-dimensional Lie algebras}. 
Third edition. Cambridge University Press, Cambridge, 1990. xxii+400pp.

\bibitem[La]{La}
P. Lampe,
Email from October 8th, 2011.

\bibitem[Le]{Le}
B. Leclerc,
\emph{Quantum loop algebras, quiver varieties, 
and cluster algebras},
Representations of algebras and related topics,  117--152, 
EMS Ser. Congr. Rep., Eur. Math. Soc., Z\"urich, 2011.

\bibitem[Lu]{Lu}
G. Lusztig,
\emph{Semicanonical bases arising from enveloping algebras},
Adv. Math. 151 (2000), no. 2, 129--139.

\bibitem[N1]{N1}
H. Nakajima,
\emph{Quiver varieties and cluster algebras},  
Kyoto J. Math. 51 (2011),  no. 1, 71--126.

\bibitem[N2]{N2}
H. Nakajima, Lecture at the workshop
"Cluster algebras, representation theory, and Poisson geometry", Banff, September 2011.

\bibitem[R]{R}
C.M. Ringel, 
\emph{The preprojective algebra of a quiver}. 
Algebras and modules, II (Geiranger, 1996), 467--480, CMS Conf. Proc., 24, Amer. Math. Soc., Providence, RI, 1998. 

\bibitem[Z]{Z}
A. Zelevinsky, 
Private communication, Balestrand, June 2011, and
Email exchange, July 2011.

\end{thebibliography}
\end{document}